\newtheorem{theorem}{Theorem}[section]
\newtheorem{lemma}[theorem]{Lemma}
\newtheorem{corollary}[theorem]{Corollary}
\theoremstyle{definition}
\newtheorem{definition}{Definition}
\definecolor{hcol}{RGB}{33,120,33}
\definecolor{fcol}{RGB}{44,90,160}
\definecolor{gcol}{RGB}{170,0,0}
\renewcommand\section{\@startsection {section}{1}{\z@}%
                                   {-3.5ex \@plus -1ex \@minus -.2ex}%
                                   {1.3ex \@plus.2ex}%
                                   {\bf\large}}
\numberwithin{equation}{section}
\numberwithin{figure}{section}
\renewcommand{\setminus}{ - }
\DeclareMathOperator{\fix}{\textnormal{fix}}
\DeclareMathOperator{\diam}{\textnormal{diam}}
\title{ \vspace{-5ex}\bf \Large Forward limit sets of semigroups of substitutions}
\author{Ibai Aedo, Uwe Grimm, and Ian Short}
\date{\vspace{-5ex}}
\begin{document}

\maketitle

{\centering\small Dedicated to Uwe Grimm (1963--2021).\par}

\begin{abstract}
We introduce the forward limit set $\Lambda$ of a semigroup $S$ generated by a family of substitutions of a finite alphabet, which typically coincides with the set of all possible s-adic limits of that family. We provide several alternative characterisations of the forward limit set. For instance, we prove that $\Lambda$ is the unique maximal closed and strongly $S$-invariant subset of the space of all infinite words, and we prove that it is the closure of the set of images of all fixed points under $S$. It is usually difficult to compute a forward limit set explicitly; however, we show that, provided certain assumptions hold, $\Lambda$ is uncountable, and we supply upper bounds on its size in terms of logarithmic Hausdorff dimension.
\end{abstract}

\section{Introduction}\label{secA}

The objective of this paper is to characterise forward limit sets of semigroups of substitutions acting on finite alphabets and to determine the size of such limit sets. The work is motivated by the theory of s-adic sequences, which is concerned with the dynamics of compositions of sequences of substitutions. Here we take a global view of such systems, examining the full collection of all s-adic sequences corresponding to some specified finite set of substitutions and considering the relationship of these sequences to the semigroup generated by the substitutions. This line of enquiry is inspired by semigroup theory in other disciplines, such as complex dynamics, hyperbolic dynamics, and hyperbolic geometry \cite{AvBoYo2010,FrMaSt2012,HiMa1996,JaSh2022}, where the relationship between semigroups and s-adic type sequences is explored, and where forward limit sets are of foundational importance.

Let $\mathscr{F}$ be a collection of substitutions defined over a finite alphabet $\mathscr{A}$. We denote by $\mathscr{A}^+$ the collection of finite words over $\mathscr{A}$, and we denote the collection of infinite words over $\mathscr{A}$ by $\mathscr{A}^\mathbb{N}$ (where $\mathbb{N}=\{1,2,\dotsc\}$). By composing elements of $\mathscr{F}$ we obtain a semigroup $S$, which we call the \emph{substitution semigroup} generated by $\mathscr{F}$. This acts on both $\mathscr{A}^+$ and $\mathscr{A}^\mathbb{N}$, and on their union $\widetilde{\mathscr{A}}=\mathscr{A}^+\cup\mathscr{A}^\mathbb{N}$, in the usual way. 

The set $\widetilde{\mathscr{A}}$ is a complete, compact metric space with the word metric. We write $S(X)$ for the set $\{s(x): s\in S,\, x\in X\}$, and we denote the closure of a set $Y$ in $\widetilde{\mathscr{A}}$ by $\overline{Y}$. This brings us to the central object of interest.

\begin{definition}
The \emph{forward limit set} $\Lambda(A)$ of the subset $A$ of $\mathscr{A}$ for the substitution semigroup $S$ is the complement of $S(A)$ in $\overline{S(A)}$; that is,  
\[
\Lambda(A)=\overline{S(A)}\setminus S(A).
\]
The \emph{forward limit set} of $S$ is the set $\Lambda(\mathscr{A})$. We denote this set by $\Lambda$.
\end{definition}

For example, consider the substitution semigroup $S$ generated by the pair of substitutions
\[
f\colon\
\begin{matrix}
a &\longmapsto &ab\\
b &\longmapsto &ba
\end{matrix}
\qquad\text{and}\qquad
g\colon\
\begin{matrix}
a& \longmapsto &ba\\
b& \longmapsto &ab
\end{matrix}\,.
\]
The substitution $f$ is known as the \emph{Thue--Morse substitution} \cite[Section~4.6]{BaGr2013} and it has two fixed points, both aperiodic, namely $x=abbabaa\dotsc$ and $h(x)$, where $h(a)=b$ and $h(b)=a$. Observe that $g=h\circ f=f\circ h$, so $S=\{f^n:n\in\mathbb{N}\}\cup \{h\circ f^n:n\in\mathbb{N}\}$. It follows that $\Lambda=\{x,h(x)\}$. 

The actions of $f$ and $g$ can be illustrated, in part, by the \emph{first-letter graph} for $f$ and $g$ shown in Figure~\ref{figA}, which uses arrows between the vertices $a$ and $b$ to represent the first letters of the words $f(a)$, $f(b)$, $g(a)$, and $g(b)$ (first-letter graphs will be defined formally in Section~\ref{secB}).

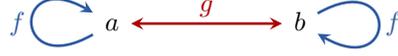
\begin{figure}[ht]
\begin{centering}
\begin{tikzcd}[column sep=2cm]
a
\ar[loop, line width=1, fcol, out=210, in=150, distance=35,"f"]
\ar[r, line width=1, <->,gcol,"g"]
&b
\ar[loop, line width=1, fcol, out=30, in=330, distance=35,"f"]
\end{tikzcd}
\caption{First-letter graph for the Thue--Morse substitutions}
\label{figA}
\end{centering}
\end{figure}

Let  us consider another example of a forward limit set, this time one associated to the substitution semigroup $S$ generated by the triple of substitutions
\[
f\colon\
\begin{matrix}
a & \longmapsto & ab\\
b &\longmapsto &a\phantom{a}
\end{matrix}\,,\qquad
g\colon\
\begin{matrix}
a &\longmapsto &ba\\
b &\longmapsto &a\phantom{a}
\end{matrix}\,,
\qquad
h\colon\
\begin{matrix}
a &\longmapsto &b\\
b &\longmapsto &a
\end{matrix}\,.
\]
This triple is illustrated in Figure~\ref{figB}. The substitution $f$ is known as the \emph{Fibonacci substitution} \cite[Example~4.6]{BaGr2013}, and it is known that $S$ is the collection of all \emph{Sturmian substitutions} (see \cite[Chapters 6 \& 9]{Fo2002} or \cite[Chapter 2]{Lo2002}). In this case, the forward limit set of $S$ is the set $B(a,b)$ of all balanced infinite words over $\{a,b\}$; these are infinite words $w$ over $\{a,b\}$ with the property that the number of $a$s (or $b$s) in any two (finite) subwords of $w$ of the same length differs by at most one. That $\Lambda=B(a,b)$ can be deduced from results such as \cite[Proposition 4.8]{Ri2021}. We omit the details, however, we will at least demonstrate shortly that the forward limit set of $S$ is uncountable.

\begin{figure}[ht]
\begin{centering}
\begin{tikzcd}[column sep=25mm]
a
\ar[loop, line width=1, fcol, out=210, in=150, distance=35,"f"]
\ar[r, line width=1, gcol, <->, bend right=45,"g"]
\ar[r, line width=1, hcol, <->,"h"]
&b
\ar[l, line width=1, fcol, bend right=45, "f",swap]
\end{tikzcd}
\caption{First-letter graph for the Sturmian substitutions}
\label{figB}
\end{centering}
\end{figure}
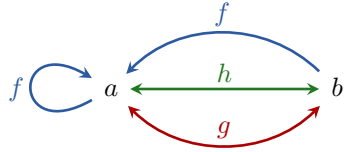

The forward limit set $\Lambda(A)$ is (by definition) the boundary of $S(A)$, and it lies within $\mathscr{A}^\mathbb{N}$ because $\mathscr{A}^+$ has the discrete topology (so $\overline{S(A)}$ does not accumulate in $\mathscr{A}^+$). The set $\Lambda(A)$ is closed, and we will see later (Lemma~\ref{lemS}) that it is $S$-invariant, in the sense that each member of $S$ maps $\Lambda(A)$ into itself. We explore further basic properties of forward limit sets in Section~\ref{secD}. 

We will usually be concerned with \emph{fixed-letter-free} substitution semigroups; a semigroup $S$ is of this type if each substitution $f$ in $S$ satisfies $f(a)\neq a$, for all  $a\in\mathscr{A}$. With this assumption we can relate forward limit sets to s-adic limits. We recall (from, for example, \cite[Section~4.11]{BeRi2010}) that an infinite word $x$ over $\mathscr{A}$ is an \emph{s-adic limit} of a family of substitutions $\mathscr{F}$ if there exist sequences $(f_n)$ in $\mathscr{F}$ and $(a_n)$ in $\mathscr{A}$ with $f_1\circ f_2\circ \dotsb \circ f_n(a_n)\to x$ as $n\to\infty$. 

\begin{theorem}\label{thmA}
Let $S$ be a fixed-letter-free substitution semigroup with finite generating set $\mathscr{F}$. The forward limit set of $S$ is equal to the set of all s-adic limits of $\mathscr{F}$.
\end{theorem}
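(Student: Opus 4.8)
The plan is to prove the two inclusions separately, the forward inclusion being elementary and the reverse one carrying all of the content. Throughout I use the elementary observation that $S(\mathscr{A})$ consists entirely of finite words, since the image of a single letter under any composition of substitutions is a finite word; as $\mathscr{A}^+$ is discrete, every member of $\Lambda=\overline{S(\mathscr{A})}\setminus S(\mathscr{A})$ is an infinite word. For the easy inclusion, I would take an s-adic limit $x$ of $\mathscr{F}$; by definition $x$ is an infinite word and the approximants $f_1\circ\cdots\circ f_n(a_n)$ lie in $S(\mathscr{A})$, so $x\in\overline{S(\mathscr{A})}$, while being infinite forces $x\notin S(\mathscr{A})$, whence $x\in\Lambda$. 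This direction uses neither finiteness of $\mathscr{F}$ nor the fixed-letter-free hypothesis.

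For the reverse inclusion I would take $x\in\Lambda$, which is an infinite word, and choose a sequence $(u_j)$ in $S(\mathscr{A})$ with $u_j\to x$; passing to a subsequence I may assume $u_j$ agrees with $x$ on its first $j$ letters, so $|u_j|\to\infty$. Writing $u_j=s_j(b_j)$ with $s_j=g^{(j)}_1\circ\cdots\circ g^{(j)}_{k_j}$ a composition of generators, the bound $|u_j|\le M^{k_j}$ (where $M$ is the maximal image length of a generator) forces $k_j\to\infty$. Completing each finite string $(g^{(j)}_1,\dots,g^{(j)}_{k_j})$ arbitrarily to an element of the compact space $\mathscr{F}^{\mathbb{N}}$ and passing to a further subsequence, I obtain a limit sequence $(f_n)$ whose initial segments agree with those of $(g^{(j)}_i)_i$ for $j$ large. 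The key elementary fact is that the image of any word under a substitution begins with the image of that word's first letter; applying this to the outer block $f_1\circ\cdots\circ f_m$, I find, for each fixed $m$ and all large $j$, a letter $a^{(m)}_j$ for which $f_1\circ\cdots\circ f_m(a^{(m)}_j)$ is a prefix of $u_j$. Since $\mathscr{A}$ is finite, one value $a^{(m)}$ recurs for infinitely many $j$; comparing this fixed word against $x$ for $j$ exceeding its length then shows that $f_1\circ\cdots\circ f_m(a^{(m)})$ is a prefix of $x$, for every $m$.

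It remains to show these prefixes exhaust $x$, and this is where the fixed-letter-free hypothesis and the main difficulty lie. The crux is a \emph{productivity lemma}: for any sequence $(f_n)$ of generators, the quantity $\mu_m=\min_{a\in\mathscr{A}}|f_1\circ\cdots\circ f_m(a)|$ tends to infinity. Monotonicity $\mu_m\ge\mu_{m-1}$ is immediate from $|f_1\circ\cdots\circ f_m(a)|=\sum_{c}|f_1\circ\cdots\circ f_{m-1}(c)|$, the sum being over the letters $c$ of $f_m(a)$. If $(\mu_m)$ were bounded it would be eventually constant, and in the equality case $\mu_m=\mu_{m-1}$ each length-minimising letter at stage $m$ must be sent by $f_m$ to a single length-minimising letter at stage $m-1$. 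Thus, beyond some index, each $f_m$ restricts to a self-map among the finite sets of length-minimising letters; choosing two indices $p<q$ beyond this stage with equal minimising set $A^*$, the element $G=f_{p+1}\circ\cdots\circ f_q\in S$ restricts to a self-map of the finite set $A^*$, which therefore possesses a periodic point, so that some power $G^t\in S$ fixes a letter, contradicting the fixed-letter-free hypothesis. Hence $\mu_m\to\infty$.

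Granting the productivity lemma, the proof closes quickly: $|f_1\circ\cdots\circ f_m(a^{(m)})|\ge\mu_m\to\infty$, and since each of these words is a prefix of $x$, setting $a_m=a^{(m)}$ yields $f_1\circ\cdots\circ f_m(a_m)\to x$ in the word metric, exhibiting $x$ as an s-adic limit. I expect the productivity lemma to be the main obstacle, being the sole point at which the fixed-letter-free assumption is used; the finite-self-map periodic-point argument is precisely what converts that hypothesis into genuine length growth, and without it the extracted sequence $(f_n)$ could stall (for instance on a letter-permuting generator) and fail to converge to $x$.
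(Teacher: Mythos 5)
Your argument is correct, and its overall architecture matches the paper's: an easy inclusion, a compactness/diagonalisation extraction of a limiting sequence of generators (the paper's Lemma~\ref{lemA} applied to words over $\mathscr{F}$), a length-growth lemma powered by the fixed-letter-free hypothesis, and a prefix comparison to conclude. The genuine difference is in the growth step. The paper proves a uniform statement (Theorem~\ref{thmG}: the minimum of $|f_1\circ\dotsb\circ f_n(a)|$ over \emph{all} compositions of $n$ generators tends to infinity) by a reverse-prefix subsequence extraction, eventual constancy of the bounded monotone lengths, and a pigeonhole on the pair $(|F_n(a)|,\pi_1(F_n(a)))$ feeding into Lemma~\ref{lemB}; you instead prove the per-sequence version by tracking the sets $A_m^*$ of length-minimising letters, observing that in the equality case each $f_m$ sends $A_m^*$ letter-to-letter into $A_{m-1}^*$, and extracting a self-map of a finite set, whose periodic point yields a fixed letter. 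Both are pigeonhole arguments, but yours isolates the obstruction as a cycle in a finite dynamical system and needs only the sequence at hand, whereas the paper's uniform Theorem~\ref{thmG} is an investment reused throughout the paper (Lemmas~\ref{lemJ} and~\ref{lemL}, Theorem~\ref{thmD}, etc.). You also replace the contraction estimate of Lemma~\ref{lemK} with a direct prefix argument (pigeonholing the inner first letter $a^{(m)}_j$ for each $m$), which is equivalent but self-contained, and your easy inclusion is marginally leaner since you never need to stabilise the letters $a_n$. The one place to tighten the write-up is the phrase ``restricts to a self-map among the finite sets of length-minimising letters'': the maps go $A_m^*\to A_{m-1}^*$, i.e.\ backwards along the composition, and it is only after choosing $p<q$ with $A_p^*=A_q^*$ that a genuine self-map appears; as written this is recoverable but worth stating explicitly.
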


Theorem~\ref{thmA} will be proved in Section~\ref{secE}. The theorem fails if $S$ has fixed letters. For example, consider the semigroup generated by substitutions $f$ and $g$ of a two-letter alphabet $\{a,b\}$, where $f(a)=g(a)=a$, $f(b)=ab$ and $g(b)=bb$. Then $g^n\circ f(b)\to x$, where $x=abbb\dotsc$ (and where $g^n=g\circ g\circ \dotsb \circ g$, the $n$-fold composition of $g$ with itself). However, $x$ is not an s-adic limit of $\mathscr{F}$. This can be seen from a short argument by contradiction. We assume that $f_1\circ f_2\circ \dotsb \circ f_n(a_n)\to x$, where $f_i\in\{f,g\}$ and $a_i\in\{a,b\}$. Then $a_n=b$ for all but finitely many values of $n$, and the fact that there is only a single occurence of $a$ in $x$ implies that none of the substitutions $f_i$ can equal $f$, which gives the required contradiction.

Next we turn to classifying forward limit sets by means of the action of $S$ on $\mathscr{A}^\mathbb{N}$. A subset $X$ of $\mathscr{A}^\mathbb{N}$ is said to be \emph{strongly $S$-invariant} if $S(X)=X$; that is, $X$ is strongly $S$-invariant if it is $S$-invariant and $S$ maps $X$ onto $X$.

\begin{theorem}\label{thmB}
Let $S$ be a finitely-generated fixed-letter-free substitution semigroup. A subset of $\mathscr{A}^{\mathbb{N}}$ is closed and strongly $S$-invariant if and only if it is the forward limit set of some subset of $\mathscr{A}$ for $S$. 
\end{theorem}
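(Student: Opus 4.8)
The plan is to prove the two implications separately. For the forward implication suppose $X=\Lambda(A)$ for some $A\subseteq\mathscr{A}$. Closedness is recorded in Section~\ref{secA}, and the $S$-invariance $S(\Lambda(A))\subseteq\Lambda(A)$ is Lemma~\ref{lemS}, so the only thing to establish is the reverse inclusion $\Lambda(A)\subseteq S(\Lambda(A))$. Since every $s(a)$ with $a\in\mathscr{A}$ is a finite word we have $S(A)\subseteq\mathscr{A}^+$, and as $\mathscr{A}^+$ is discrete this yields the identification $\Lambda(A)=\overline{S(A)}\cap\mathscr{A}^{\mathbb{N}}$, which I use throughout. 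Now take $x\in\Lambda(A)$ and a sequence $s_n(a_n)\to x$ with $s_n\in S$ and $a_n\in A$; passing to a subsequence I may assume $a_n$ equals a fixed letter $a$. Because $x$ is infinite, $|s_n(a)|\to\infty$, so writing each $s_n$ as a composition of generators its number of factors tends to infinity, and eventually $s_n=g_n\circ t_n$ with $g_n\in\mathscr{F}$ and $t_n\in S$. Finiteness of $\mathscr{F}$ lets me pass to a subsequence on which $g_n$ is a fixed generator $g$, and compactness of $\widetilde{\mathscr{A}}$ lets me assume $t_n(a)\to y\in\overline{S(A)}$; continuity of $g$ then gives $g(y)=x$. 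As $x$ is infinite so is $y$, and $y\notin S(A)$ (otherwise $x=g(y)\in S(A)$), whence $y\in\Lambda(A)$ and $x=g(y)\in S(\Lambda(A))$. This settles the forward implication.

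For the converse, let $X\subseteq\mathscr{A}^{\mathbb{N}}$ be closed and strongly $S$-invariant, and take $A$ to be the set of first letters of the words in $X$; I claim $\Lambda(A)=X$. For the inclusion $\Lambda(A)\subseteq X$, fix $z\in\Lambda(A)$ and a sequence $s_n(a_n)\to z$ with $a_n\in A$, and choose $w^{(n)}\in X$ whose first letter is $a_n$. Since substitutions are homomorphisms they send prefixes to prefixes, so $s_n(a_n)$ is a prefix of $s_n(w^{(n)})$; by strong invariance $s_n(w^{(n)})\in S(X)=X$. As $s_n(a_n)$ agrees with $z$ on prefixes of length tending to infinity, the same holds for $s_n(w^{(n)})$, so $s_n(w^{(n)})\to z$, and closedness of $X$ gives $z\in X$.

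For the reverse inclusion $X\subseteq\Lambda(A)$, fix $x\in X$ and use $X=S(X)$ repeatedly to build a backward orbit $x=x^{(0)}$ with $x^{(k-1)}=s_k(x^{(k)})$, where $s_k\in S$ and $x^{(k)}\in X$. Writing $a_k$ for the first letter of $x^{(k)}$, so that $a_k\in A$, the word $(s_1\circ\cdots\circ s_k)(a_k)$ lies in $S(A)$ and is a prefix of $x$, so it suffices to show these prefixes have length tending to infinity, giving $x\in\overline{S(A)}\cap\mathscr{A}^{\mathbb{N}}=\Lambda(A)$. This is the crux. Because $a_{k-1}$ is the first letter of $s_k(a_k)$, the lengths $L_k=|(s_1\circ\cdots\circ s_k)(a_k)|$ are non-decreasing, and non-erasingness forces the equality $L_{k-1}=L_k$ to hold only when $s_k(a_k)=a_{k-1}$ is a single letter. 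If the $L_k$ were bounded this would hold for all large $k$, giving $a_j=(s_{j+1}\circ\cdots\circ s_{j'})(a_{j'})$ for large $j<j'$; choosing by the pigeonhole principle such $j<j'$ with $a_j=a_{j'}$ would produce an element of $S$ fixing a letter, contradicting the fixed-letter-free hypothesis. Hence $L_k\to\infty$, as required, and $\Lambda(A)=X$.

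The main obstacle is precisely this last length-growth step. Everything else reduces to compactness, continuity of substitutions, and their prefix-preserving property, whereas the growth of $L_k$ is what excludes degenerate backward orbits built from length-preserving letter permutations; it is exactly here that fixed-letter-freeness is indispensable, and one should note that it must be applied to the composite element $s_{j+1}\circ\cdots\circ s_{j'}$ of $S$, not merely to a single generator.
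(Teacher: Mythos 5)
Your proof is correct and follows essentially the same route as the paper: the forward implication is the paper's Lemma~\ref{lemJ} (split off the leftmost generator, use compactness), and the converse combines the backward-orbit argument of Lemmas~\ref{lemL} and~\ref{lemM}. The only real difference is cosmetic: where the paper invokes the general growth criterion of Theorem~\ref{thmG} (via Lemma~\ref{lemK}), you prove the needed length growth ad hoc by a pigeonhole argument on the letters $a_k$, and in the forward direction you deduce that $y$ is infinite directly from $g(y)=x$; both shortcuts are valid.
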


Theorem~\ref{thmB} will be proved in Section~\ref{secF} along with various related results. A consequence of this theorem is another characterisation of $\Lambda$, which says that $\Lambda$ is the greatest element in the poset of closed and strongly $S$-invariant subsets of $\mathscr{A}^{\mathbb{N}}$.

\begin{corollary}\label{corA}
Let $S$ be a fixed-letter-free substitution semigroup with finite generating set $\mathscr{F}$. Every closed and strongly $S$-invariant subset of $\mathscr{A}^{\mathbb{N}}$ is contained in the forward limit set of~$S$.
\end{corollary}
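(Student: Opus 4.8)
The plan is to deduce the corollary directly from Theorem~\ref{thmB} together with the monotonicity of the assignment $A\mapsto\Lambda(A)$. Let $X$ be a closed and strongly $S$-invariant subset of $\mathscr{A}^{\mathbb{N}}$. By Theorem~\ref{thmB} there is a subset $A$ of $\mathscr{A}$ with $X=\Lambda(A)$, so it suffices to prove that $\Lambda(A)\subseteq\Lambda(\mathscr{A})=\Lambda$ for every $A\subseteq\mathscr{A}$.

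The containment $A\subseteq\mathscr{A}$ gives $S(A)\subseteq S(\mathscr{A})$, and hence $\overline{S(A)}\subseteq\overline{S(\mathscr{A})}$ by monotonicity of closure. Now take any $x\in\Lambda(A)=\overline{S(A)}\setminus S(A)$. Then $x\in\overline{S(\mathscr{A})}$, so to conclude that $x\in\Lambda=\overline{S(\mathscr{A})}\setminus S(\mathscr{A})$ it remains only to verify that $x\notin S(\mathscr{A})$.

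This last point is where a little care is needed: we know $x\notin S(A)$, but we require the stronger statement $x\notin S(\mathscr{A})$, which does not follow formally from $A\subseteq\mathscr{A}$. I would resolve it using the finite/infinite dichotomy already recorded in the text. Since each substitution sends a letter to a finite word, every element of $S(\mathscr{A})$ is a finite word, so $S(\mathscr{A})\subseteq\mathscr{A}^+$. On the other hand $\Lambda(A)\subseteq\mathscr{A}^{\mathbb{N}}$, because $\mathscr{A}^+$ carries the discrete topology and therefore $\overline{S(A)}$ cannot accumulate there. Thus $x$ is an infinite word whereas every member of $S(\mathscr{A})$ is finite, which forces $x\notin S(\mathscr{A})$ and hence $x\in\Lambda$, as required.

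The argument is short, and the step I would flag as the sole genuine obstacle is exactly this passage from $x\notin S(A)$ to $x\notin S(\mathscr{A})$; the rest is purely set-theoretic monotonicity. It is worth emphasising that the obstacle is dispatched entirely by the observation that forward limit sets consist of infinite words while images of letters are finite, so no additional structural hypothesis on $S$ is invoked here beyond finite generation and the fixed-letter-free condition needed to apply Theorem~\ref{thmB}.
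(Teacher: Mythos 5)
Your proof is correct and follows essentially the same route as the paper: apply Theorem~\ref{thmB} to write $X=\Lambda(A)$ and then use monotonicity of $A\mapsto\Lambda(A)$, where the only point needing care ($x\notin S(A)$ versus $x\notin S(\mathscr{A})$) is dispatched exactly as the paper does in its lemma $\Lambda(A)=\bigcup_{a\in A}\Lambda(a)$, namely by observing that $S(\mathscr{A})\subseteq\mathscr{A}^+$ while forward limit sets lie in $\mathscr{A}^{\mathbb{N}}$. The only difference is cosmetic: the paper has this monotonicity prepackaged in that lemma from Section~\ref{secD}, whereas you re-derive it inline.
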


In other branches of dynamics, forward limit sets and related sets can often be characterised in terms of fixed points. In this work, a \emph{fixed point} of a substitution $f$ is a word $x$ (finite or infinite) with $f(x)=x$. We denote by $\fix(S)$ the collection of all fixed points of substitutions from $S$.

\begin{theorem}\label{thmD}
The forward limit set of a finitely-generated fixed-letter-free substitution semigroup $S$ is equal to $\overline{S(X)}$, where $X=\fix(S)$.
\end{theorem}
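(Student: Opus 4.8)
\section*{Proof proposal}

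The plan is to prove the two inclusions $\overline{S(X)}\subseteq\Lambda$ and $\Lambda\subseteq\overline{S(X)}$ separately, where $X=\fix(S)$. For the first inclusion I would begin by checking that every fixed point lies in $\Lambda$. A fixed-letter-free substitution has no finite nonempty fixed points (a short length count: if $\sigma(w)=w$ with $w$ finite and nonempty then, as substitutions are non-erasing, every letter of $w$ would have to be fixed by $\sigma$), so any $z\in X$ is an infinite word. If $z=\sigma(z)$ with $\sigma\in S$ and $z$ begins with the letter $b$, then $\sigma(b)$ begins with $b$ and $z=\lim_k\sigma^k(b)$. Since each $\sigma^k(b)$ is the image of a single letter it is a finite word lying in $S(\mathscr{A})$, whereas $z$ is infinite; hence $z\in\overline{S(\mathscr{A})}\setminus S(\mathscr{A})=\Lambda$. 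Thus $X\subseteq\Lambda$, and as $\Lambda$ is $S$-invariant (Lemma~\ref{lemS}) we get $S(X)\subseteq\Lambda$; because $\Lambda$ is closed this yields $\overline{S(X)}\subseteq\Lambda$.

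The substantial inclusion is $\Lambda\subseteq\overline{S(X)}$, for which I would use Theorem~\ref{thmA} to write an arbitrary $x\in\Lambda$ as an s-adic limit $x=\lim_n\phi_n(a_n)$, where $\phi_n=f_1\circ\dotsb\circ f_n$, the $f_i$ lie in $\mathscr{F}$, and $\phi_n(a_n)$ agrees with $x$ on the first $L_n$ letters with $L_n\to\infty$. The idea is to manufacture, for each scale, a fixed point whose image under some $\phi_{n_i}$ agrees with $x$ on a long prefix. To locate the right fixed points I would pass to the \emph{first-letter maps}: for a substitution $f$ let $\lambda_f\colon\mathscr{A}\to\mathscr{A}$ send a letter $a$ to the first letter of $f(a)$, noting that $\lambda_{f\circ g}=\lambda_f\circ\lambda_g$, so the $\lambda_f$ generate a finite transformation semigroup. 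Applying Ramsey's theorem to the colouring of pairs $i<j$ by the composite $\lambda_{f_{i+1}}\circ\dotsb\circ\lambda_{f_{j}}$ produces an infinite set of indices $n_0<n_1<\dotsb$ for which every ``gap map'' $\lambda_{f_{n_i+1}}\circ\dotsb\circ\lambda_{f_{n_j}}$ equals one fixed idempotent $\epsilon$; refining this set and using finiteness of $\mathscr{A}$ (and idempotence of $\epsilon$, which is preserved under passing to a subsequence of markers) I may also assume that $a_{n_j}$ is a constant letter $\alpha$.

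With this in hand I would set $c=\epsilon(\alpha)$, so that $\epsilon(c)=c$ and each block $\tau_i=f_{n_i+1}\circ\dotsb\circ f_{n_{i+1}}\in S$ has $\tau_i(c)$ beginning with $c$; thus $\tau_i$ is prolongable at $c$ and $z_i=\lim_k\tau_i^{\,k}(c)$ is a fixed point of $\tau_i$, so $z_i\in X$. The verification then has two parts. First, since $\tau_i(\alpha)$ begins with $\epsilon(\alpha)=c$ and $\phi_{n_i}\circ\tau_i=\phi_{n_{i+1}}$, the word $\phi_{n_i}(c)$ is a prefix of $\phi_{n_{i+1}}(\alpha)$, and likewise (as $\tau_i(c)$ begins with $c$) a prefix of $\phi_{n_{i+1}}(c)$; moreover, because $S$ is fixed-letter-free we have $\tau_i(c)\neq c$, so $\phi_{n_{i+1}}(c)$ is \emph{strictly} longer than $\phi_{n_i}(c)$ and $|\phi_{n_i}(c)|\to\infty$. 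Second, $z_i$ begins with $\tau_i(c)$, whence $\phi_{n_i}(z_i)$ begins with $\phi_{n_i}(\tau_i(c))=\phi_{n_{i+1}}(c)$, which is itself a prefix of $\phi_{n_{i+2}}(\alpha)$; since the latter agrees with $x$ on $L_{n_{i+2}}$ letters, $\phi_{n_i}(z_i)$ agrees with $x$ on at least $\min\{|\phi_{n_{i+1}}(c)|,\,L_{n_{i+2}}\}$ letters, a quantity tending to infinity. As $\phi_{n_i}(z_i)\in S(X)$, this gives $\phi_{n_i}(z_i)\to x$, and hence $x\in\overline{S(X)}$, completing the second inclusion.

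The main obstacle is exactly the middle step: a naive pigeonhole on letters produces prolongable blocks, but there is no reason for the resulting fixed points to have images that align with $x$ or that grow in length. The device that overcomes this is the idempotent $\epsilon$ in the finite transformation semigroup of first-letter maps, which simultaneously closes up a block compatibly with the directive sequence (giving alignment through $\tau_i(\alpha)$ beginning with $c$) and cooperates with the fixed-letter-free hypothesis (giving strict growth of $|\phi_{n_i}(c)|$). I would expect the bookkeeping around these alignments, rather than any single hard estimate, to be the delicate part of the write-up.
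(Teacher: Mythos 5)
Your proof is correct, and while the easy inclusion $\overline{S(X)}\subseteq\Lambda$ proceeds exactly as in the paper (finite fixed words are excluded by a length count, infinite fixed points are obtained as limits $f^n(a)\to x$, and one then invokes $S$-invariance and closedness of $\Lambda$), your argument for $\Lambda\subseteq\overline{S(X)}$ is genuinely different in its combinatorial core. The paper also starts from a directive sequence $G_n(b)\to x$ (via its Theorem~\ref{thmH}), but it locates the needed recurrence by forming the words $w_n=G_{n}[b]G_{2,n}[b]\dotsc G_{n,n}[b]$ and applying a bespoke inductive pigeonhole lemma (Lemma~\ref{lemF}) to find a position $r$ and a letter $a$ repeated at positions $r$ and $r+l_k$; it then extracts a convergent subsequence $H_{p_k}(a)\to y$, shows $y\in\overline{\fix(S)}$ by pairing each $H_{p_k}$ with its own fixed point (Lemma~\ref{lemG}), and finally writes $x=g(y)$ with $g=G_{r-1}$. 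You instead apply Ramsey's theorem to the finite transformation semigroup of first-letter maps to produce markers whose gap maps all equal a single idempotent $\epsilon$, which hands you blocks $\tau_i$ prolongable at the letter $c=\epsilon(\alpha)$; the fixed points $z_i$ of these blocks are genuine elements of $\fix(S)$ whose images $\phi_{n_i}(z_i)$ lie in $S(X)$ itself (not merely in its closure at an intermediate stage) and converge to $x$ by the prefix bookkeeping you describe. What the paper's route buys is self-containedness (Lemma~\ref{lemF} is elementary and reused nowhere else, but avoids invoking Ramsey); what your route buys is a cleaner structural explanation — the idempotent simultaneously guarantees alignment of the blocks with the directive sequence and, together with the fixed-letter-free hypothesis, the strict growth $|\phi_{n_i}(c)|\to\infty$ — at the cost of importing the Ramsey/idempotent machinery. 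The only points worth tidying in a final write-up are to start the marker sequence at $n_0\geqslant 1$ so that $\phi_{n_i}\in S$ is a genuine composition of generators, and to note explicitly that $|\tau_i(c)|\geqslant 2$ (since $\tau_i(c)$ begins with $c$ but $\tau_i(c)\neq c$) is what forces $|\phi_{n_{i+1}}(c)|>|\phi_{n_i}(c)|$.
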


Theorem~\ref{thmD} will be proved in Section~\ref{secG}.

The results stated so far concern characterisations of forward limit sets. The next two results are about the size of forward limit sets. The first one (proved in Section~\ref{secH}) says that, under certain assumptions, the forward limit set of a substitution semigroup is uncountable. In stating this result, we describe one word $a$ as a \emph{prefix} of another word $b$ if $b=ac$, where $c$ is either another word or the empty word. 

\begin{theorem}\label{thmE}
Let $S$ be a fixed-letter-free substitution semigroup with finite generating set $\mathscr{F}$. Suppose that $\fix(S)$ contains two words with the same first letter and suppose also that $f(a)$ is not a prefix of $f(b)$, for any distinct
$a,b\in\mathscr{A}$ and $f\in\mathscr{F}$. Then the forward limit set of $S$ is uncountable.
\end{theorem}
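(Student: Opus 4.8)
The plan is to embed the full binary sequence space $\{0,1\}^{\mathbb{N}}$ injectively into $\Lambda$, so that its uncountability forces $\Lambda$ to be uncountable. By hypothesis there are two distinct words $u,v\in\fix(S)$ sharing a first letter $c$; write $u=\sigma(u)$ and $v=\tau(v)$ with $\sigma,\tau\in S$. Since every element of $S$ is fixed-letter-free we have $\sigma(c)\neq c$, and because $u=\sigma(u)$ begins with $c$ the word $\sigma(c)$ begins with $c$; hence $\lvert\sigma^n(c)\rvert\to\infty$ and $\sigma^n(c)\to u$, so $u$ is infinite, and likewise $v$. As $u\neq v$ but both begin with $c$, they first disagree at some position $k\ge 2$. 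First I would manufacture two ``branching'' maps with incomparable images of $c$: since $\sigma^n(c)\to u$ and $\tau^n(c)\to v$, for large $n$ the words $\sigma^n(c)$ and $\tau^n(c)$ agree with $u$ and $v$ on their first $k$ letters, so fixing such an $N$ and setting $\alpha=\sigma^N$, $\beta=\tau^N$ (both in $S$), the words $\alpha(c)$ and $\beta(c)$ coincide up to position $k-1$ but differ at position $k$; neither is a prefix of the other.

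The prefix hypothesis enters next. Each generator $f\in\mathscr{F}$ has $\{f(a):a\in\mathscr{A}\}$ prefix-free, so $f$ is injective on letters and its extension to words is \emph{prefix-monotone}: $f(x)$ is a prefix of $f(y)$ exactly when $x$ is a prefix of $y$. This is the standard unique-decodability of prefix codes, and (since the alphabet has at least two letters) prefix-freeness also forces $f$ to be non-erasing. Prefix-monotonicity is preserved under composition, so $\alpha$ and $\beta$, being compositions of generators, are themselves prefix-monotone and injective on $\mathscr{A}^{\mathbb{N}}$.

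Now, for each $\omega\in\{0,1\}^{\mathbb{N}}$, writing $\gamma_0=\alpha$ and $\gamma_1=\beta$, I would form $W_n=\gamma_{\omega_1}\circ\cdots\circ\gamma_{\omega_n}(c)$. As $\gamma_{\omega_{n+1}}(c)$ begins with $c$, each $W_{n+1}$ properly extends $W_n$ and $\lvert W_n\rvert\to\infty$, so the $W_n$ converge to an infinite word $w_\omega$. To place $w_\omega$ in $\Lambda$, I would replace the seed $c$ by $u$: since $u$ begins with $c$, the word $\gamma_{\omega_1}\circ\cdots\circ\gamma_{\omega_n}(u)$ begins with $W_n$ and so also converges to $w_\omega$, while each such word lies in $S(\fix(S))$. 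Theorem~\ref{thmD} then gives $w_\omega\in\overline{S(\fix(S))}=\Lambda$.

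It remains to prove that $\omega\mapsto w_\omega$ is injective, which I expect to be the crux, since it is exactly where incomparability and prefix-monotonicity combine. I would decode $\omega$ from $w_\omega$ one symbol at a time: for each $j$, the word $w_\omega$ begins with $\gamma_{\omega_1}\circ\cdots\circ\gamma_{\omega_j}\bigl(\gamma_{\omega_{j+1}}(c)\bigr)$, whereas the two candidates $\gamma_{\omega_1}\circ\cdots\circ\gamma_{\omega_j}(\alpha(c))$ and $\gamma_{\omega_1}\circ\cdots\circ\gamma_{\omega_j}(\beta(c))$ are incomparable, being the images of the incomparable words $\alpha(c),\beta(c)$ under the prefix-monotone map $\gamma_{\omega_1}\circ\cdots\circ\gamma_{\omega_j}$. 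Hence at most one candidate can be a prefix of $w_\omega$, and which one it is reveals $\omega_{j+1}$. Thus $\omega$ is recovered from $w_\omega$, the map is injective, and $\Lambda$ contains an uncountable set, as required.
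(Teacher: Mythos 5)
Your proof is correct, and it takes a genuinely different route from the paper. The paper defines $\Sigma(a)$ to be the set of limits of $g_1\circ\dotsb\circ g_n(a)$ with $g_i\in S$, where $a$ is the common first letter of the two fixed points, and shows that $\overline{\Sigma(a)}$ is a \emph{perfect} set: using Lemma~\ref{lemG} it finds fixed points $z$ of elements $h\in S$ arbitrarily close to any $x\in\Sigma(a)$, then pushes a second point $y\in\Sigma(a)$ towards $z$ via the iterates $h^n(y)$, with injectivity of $h$ (Lemma~\ref{lemH}, the only place the prefix hypothesis enters) guaranteeing $h^n(y)\neq z$; uncountability then follows because a non-empty perfect subset of a complete metric space is uncountable. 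You instead build the two incomparable ``branch words'' $\alpha(c)$ and $\beta(c)$ directly from the two fixed points and code the full binary tree into $\Lambda$, using the stronger \emph{prefix-monotonicity} consequence of the hypothesis (unique decodability of the prefix code $\{f(a)\}$, which persists under composition) to decode $\omega$ from $w_\omega$ and hence get injectivity. Your argument buys a little more: it exhibits an explicit injective copy of $\{0,1\}^{\mathbb{N}}$, so $\Lambda$ has cardinality $2^{\aleph_0}$, and it bypasses Lemma~\ref{lemG} and the perfect-set machinery entirely; the cost is that you must prove the prefix-monotonicity lemma, a mild strengthening of the paper's Lemma~\ref{lemH}. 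Two small remarks: your appeal to Theorem~\ref{thmD} to place $w_\omega$ in $\Lambda$ is sound but unnecessary, since $w_\omega=\lim_n G_n(c)$ with $G_n\in S$ already puts $w_\omega\in\overline{S(c)}\setminus S(c)=\Lambda(c)\subseteq\Lambda$; and the fact that the two fixed points are infinite is most cleanly cited from Corollary~\ref{corB} (no fixed-letter-free substitution fixes a finite word), though your limit argument also delivers it.
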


The scope of application of Theorem~\ref{thmE} is broader than it might first appear. For example, the Fibonacci substitutions $f$ and $g$ illustrated in Figure~\ref{figB} do not satisfy the hypotheses of the theorem, because $f$ fails the prefix property (since $f(b)=a$ is a prefix of $f(a)=ab$). However, the pair of substitutions $g$ and $k=f\circ g$ do satisfy the hypotheses, from which it follows that the semigroup generated by this pair has an uncountable forward limit set, so the forward limit set of the semigroup generated by $f$ and $g$ is also uncountable. Consequently,  the forward limit set of the semigroup of all Sturmian substitutions, which is generated by $f$, $g$ and $h$, where $h(a)=b$ and $h(b)=a$, is uncountable too.

The hypothesis that $\fix(S)$ contains two words with the same first letter rules out semigroups  generated by a single substitution -- such semigroups have finite forward limit sets. The prefix hypothesis rules out semigroups generated by substitutions that are not injective. For example, consider the substitution semigroup $S$ of the two-letter alphabet $\{a,b\}$ generated by substitutions $f$ and $g$, where $f(a)=f(b)=aa$ and $g(a)=g(b)=ab$. The forward limit set of $S$ consists of the two fixed points of $f$ and $g$.

There are more complex substitution semigroups with countable forward limit sets. Consider, for example, the semigroup $S$ generated by the substitutions
\[
f\colon\
\begin{matrix}
a& \longmapsto &ac\phantom{a}\\
b& \longmapsto &cb\phantom{a}\\
c& \longmapsto &cba
\end{matrix}
\qquad\text{and}\qquad
g\colon
\begin{matrix}
a& \longmapsto &bac\\
b& \longmapsto &c\phantom{aa}\\
c& \longmapsto &cba
\end{matrix}\,,
\]
illustrated in Figure~\ref{figD}, which fails the prefix condition of Theorem~\ref{thmE}.

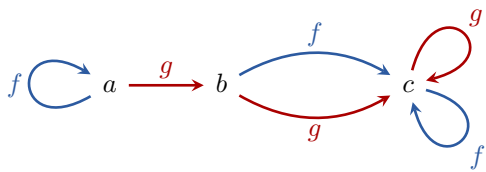
\begin{figure}[ht]
\begin{centering}
\begin{tikzcd}[column sep=1cm]
a
\ar[loop, line width=1, fcol, out=210, in=150, distance=35,"f"]
\ar[r, line width=1, gcol,"g"]
&b
\ar[rr, line width=1, gcol, bend right,"g",swap]
\ar[rr, line width=1, fcol, bend left,"f"]
&&c
\ar[loop, line width=1, fcol, out=345, in=285, distance=35,"f"]
\ar[loop, line width=1, gcol, out=75, in=15, distance=35,"g"]
\end{tikzcd}
\caption{First-letter graph for $f$ and $g$}
\label{figD}
\end{centering}
\end{figure}

One can check that $f\circ g =g^2$, and from this it follows that the semigroup $S$ generated by $\mathscr{F}=\{f,g\}$ comprises all substitutions of the form $g^m\circ f^n$, where $m$ and $n$ are positive integers or $0$ (and they are not both $0$). Then it can be shown that $\Lambda$ consists of the fixed point $x$ of $f$ with first letter $c$ (also a fixed point of $g$) as well as all points $g^m(y)$, for $m=0,1,2,\dotsc$, where $y$ is the fixed point of $f$ with first letter $a$. Thus $\Lambda$ is infinite and countable.

Informally speaking, Theorem~\ref{thmE} shows that (typically) forward limit sets cannot be too small; the next theorem says that forward limit sets cannot be too large. It is framed using logarithmic Hausdorff dimension, which can be used to quantify the size of sets for which the usual Hausdorff dimension is zero. 

In this theorem the \emph{length} of a finite set of substitutions $\mathscr{F}$ is the least length (number of letters) of $f(a)$, among all letters $a\in\mathscr{A}$ and substitutions $f\in\mathscr{F}$. 

\begin{theorem}\label{thmF}
Let $S$ be a substitution semigroup generated by a finite set $\mathscr{F}$ of substitutions of length $r$ and size $s$, where $r>1$. Then the forward limit set of $S$ has logarithmic Hausdorff dimension at most $\log_rs$.

Furthermore, given any pair of positive integers $r$ and $s$ with $r>1$ and $s\leqslant|\mathscr{A}|^{r-1}$, there exists a substitution semigroup $S$ with these parameters for which the bound $\log_rs$ is attained.
\end{theorem}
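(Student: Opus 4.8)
The plan is to prove the two assertions separately: the bound $\dim_{\log}\Lambda\le\log_r s$ by exhibiting economical covers of $\Lambda$, and the attainment by constructing, for each admissible pair $(r,s)$, an explicit semigroup together with a measure on its forward limit set that witnesses the matching lower bound.

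For the upper bound, the combinatorial fact I would isolate first is a \emph{prefix-counting lemma}: for every $n$, each word of $\Lambda$ agrees, on its first $r^n$ letters, with one of the finitely many words $f_{i_1}\circ\cdots\circ f_{i_n}(c)$ obtained by applying a length-$n$ composition of generators to a single letter $c\in\mathscr{A}$. This rests on two observations. First, because every generator has image length at least $r$, a length-$n$ composition $g$ sends each letter $c$ to a word of length at least $r^n$, so $\text{prefix}_{r^n}(g(w))=\text{prefix}_{r^n}(g(c))$ depends only on $g$ and on the first letter $c$ of $w$. Second, if $x\in\Lambda$ then $x$ is a limit of words $s_k(a_k)\in S(\mathscr{A})$ whose lengths tend to infinity; since $|s_k(a_k)|\le R^{m_k}$, where $R$ is the maximal image length and $m_k$ the composition length of $s_k$, the depths $m_k$ tend to infinity, so for large $k$ one peels off the outermost $n$ factors of $s_k$ and applies the first observation. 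The lemma yields a cover of $\Lambda$ by at most $s^n|\mathscr{A}|$ cylinders, each of diameter $b^{-r^n}$, where $b>1$ is the base of the word metric. Feeding this into the gauge $h_t(\delta)=(\log(1/\delta))^{-t}$ defining logarithmic Hausdorff measure gives a cover sum comparable to $s^n|\mathscr{A}|\,(r^n)^{-t}=|\mathscr{A}|\,(s/r^t)^n$, which tends to $0$ precisely when $t>\log_r s$; hence $\dim_{\log}\Lambda\le\log_r s$.

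For the attainment, I would set $q=|\mathscr{A}|$, choose $s$ distinct words $\sigma_1,\dots,\sigma_s\in\mathscr{A}^{r-1}$ (possible exactly because $s\le q^{r-1}$), fix a letter $a\in\mathscr{A}$ and a word $\tau$ of length $r-1$, and define generators by $f_i(a)=a\sigma_i$ and $f_i(c)=c\tau$ for $c\ne a$. Then every image has length exactly $r$, so the generating set has length $r$ and size $s$, and it is automatically fixed-letter-free since $r>1$. Because $f_i(a)$ begins with $a$, the words $f_{i_1}\circ\cdots\circ f_{i_n}(a)$ are nested prefixes converging to an infinite word $x_{(i_j)}\in\Lambda$. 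The crucial point is that the coding $(i_1,\dots,i_n)\mapsto f_{i_1}\circ\cdots\circ f_{i_n}(a)$ is injective: each $f_i$ is injective on letters (its images have distinct first letters) and all image blocks share the common length $r$, so one reads $i_1$ off the first block $f_{i_1}(a)=a\sigma_{i_1}$, inverts $f_{i_1}$ blockwise to recover $f_{i_2}\circ\cdots\circ f_{i_n}(a)$, and proceeds by induction.

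Finally I would transport the uniform Bernoulli measure $\nu$ on $\{1,\dots,s\}^{\mathbb{N}}$ to $\mu=\Phi_*\nu$ on $\Lambda$, where $\Phi((i_j))=x_{(i_j)}$. Since $\text{prefix}_{r^n}(x_{(i_j)})$ depends only on $(i_1,\dots,i_n)$ and the coding is injective, every cylinder of length $r^n$ receives $\mu$-mass at most $s^{-n}$; a set $U$ of diameter $\delta\approx b^{-r^n}$ lies in such a cylinder, while $h_t(\delta)\approx(r^n)^{-t}=s^{-n}$ at $t=\log_r s$, giving $\mu(U)\le C\,h_t(\diam U)$ uniformly. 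The mass distribution principle for the gauge $h_t$ then forces $\mathcal{H}^{t}_{\log}(\Lambda)>0$ at $t=\log_r s$, so $\dim_{\log}\Lambda\ge\log_r s$ and equality follows. The main obstacle I anticipate is the injectivity of the coding, which is what pins the lower bound to the exact exponent $\log_r s$ and is the reason the hypothesis $s\le|\mathscr{A}|^{r-1}$ enters; the surrounding measure estimates and the upper-bound cover computation are then routine once the normalisation of the logarithmic gauge is fixed.
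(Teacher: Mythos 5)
Your proposal is correct and follows essentially the same route as the paper: for the upper bound, a cover of $\Lambda$ by the at most $s^n\lvert\mathscr{A}\rvert$ cylinders determined by depth-$n$ compositions applied to single letters (each of diameter about $2^{-r^n}$), fed into the logarithmic gauge; for attainment, an explicit family of $s$ substitutions fixing a first letter whose $s^n$ level-$n$ words are distinct, carrying a uniform measure of mass $s^{-n}$ per level-$n$ cylinder to which the mass distribution principle is applied at $t=\log_r s$. The only cosmetic differences are that the paper's example makes the images $f(a)$ pairwise distinct at every letter $a$ (you vary them only at one letter, which suffices) and constructs the measure directly on the net of cylinders via Falconer's extension result rather than as a push-forward of a Bernoulli measure.
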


Theorem~\ref{thmF} will be proved in Section~\ref{secI}. 

The assumption that $r>1$ is not as prohibitive as it may seem, for fixed-letter-free substitution semigroups at least. To justify this claim, briefly, consider once more the Fibonacci substitutions $f$ and $g$ illustrated in Figure~\ref{figB}. This pair of substitutions has length 1. However, the semigroup generated by $f$ and $g$ has the same forward limit set $\Lambda$ as the similar semigroup generated by the collection $\{f^2,f\circ g,g\circ f,g^2\}$, which has length 2 (and size 4). Theorem~\ref{thmF} then tells us that the logarithmic Hausdorff dimension of $\Lambda$ is less than or equal to $\log_24=2$. 

In the final section (Section~\ref{secJ}) we introduce the hull of the forward limit set of a substitution semigroup $S$ and prove that, with some assumptions, it is the least element in the poset of closed, $S$-invariant and shift invariant non-empty subsets of $\mathscr{A}^\mathbb{N}$. 

\section{Substitution semigroups}\label{secB}

In this section we expand on the theory of substitution semigroups summarised in the introduction. Supporting material on the theory of substitutions can be found in, for example, \cite{BaGr2013,BeRi2010,Fo2002}.

Throughout this work $\mathscr{A}$ will denote a finite set of size at least two, called an \emph{alphabet}. The elements of $\mathscr{A}$ are called \emph{letters} of $\mathscr{A}$. A \emph{word} over $\mathscr{A}$ is a finite sequence of letters of $\mathscr{A}$. We write the word $a_1,a_2,\dots,a_n$ as a string $a_1a_2\dotsc a_n$. The \emph{length} of the word $w=a_1a_2\dotsc a_n$, denoted $|w|$, is the length $n$ of the sequence.  The collection of all words over $\mathscr{A}$ forms a semigroup $\mathscr{A}^+$ with composition given by concatenation of words. We exclude the empty word from $\mathscr{A}^+$.

An \emph{infinite word} over $\mathscr{A}$ is an infinite sequence of elements of $\mathscr{A}$. The collection of all infinite words over $\mathscr{A}$ is denoted $\mathscr{A}^{\mathbb{N}}$, and a typical element of this set is written in the form $a_1a_2\dotsc$, for letters $a_i$. If $w_1, w_2,\dotsc$ are words over $\mathscr{A}$, then $w_1w_2\dotsc$ denotes the infinite word over $\mathscr{A}^+$ with the obvious meaning. Note that we only consider one-sided infinite words. We define $\widetilde{\mathscr{A}} = \mathscr{A}^+\cup\mathscr{A}^{\mathbb{N}}$, the collection of all words and infinite words over $\mathscr{A}$. Given any word $w$ of $\widetilde{\mathscr{A}}$ we write $\pi_k(w)$ for the $k$th letter in the sequence $w$, providing that $w$ has length at least $k$. If $w$ has length less than $k$, then we define $\pi_k(w)=\alpha$, where $\alpha$ is some additional letter not in $\mathscr{A}$ (this additional letter is only needed for the definition of the metric, to follow). We define a metric $d$ on $\widetilde{\mathscr{A}}$ by the formulas $d(u,u)=0$ and 
\[
d(u,v) = \frac{1}{2^{n-1}},\quad\text{where $n=\min\{k\in\mathbb{N}: \pi_k(u)\neq \pi_k(v)\}$,}
\]
for distinct elements $u$ and $v$ of $\widetilde{\mathscr{A}}$. Then $(\widetilde{\mathscr{A}},d)$ is a complete, compact metric space, and the associated topology is the product topology.

A \emph{substitution} of $\mathscr{A}$ is a function $f\colon \mathscr{A}^+\longrightarrow \mathscr{A}^+$ with the property \(f(uv)=f(u)f(v)\), for any two words $u$ and $v$ in $\mathscr{A}^+$. Substitutions are sometimes called \emph{non-erasing morphisms} (and other phrases are used too). A substitution is specified uniquely by the images of the letters of the alphabet. Let $\mathscr{F}$  denote a non-empty finite collection of substitutions of $\mathscr{A}$. We can form words over $\mathscr{F}$, just as we formed words over $\mathscr{A}$, giving rise to a semigroup $\mathscr{F}^+$ with composition given by concatenation of words. This is not the semigroup of primary interest though; our main focus is the semigroup obtained from $\mathscr{F}$ through composition of functions.

In more detail, given two substitutions $f$ and $g$ of $\mathscr{A}$, we denote by $f\circ g$ the functional composition of first $g$ and then $f$. A \emph{substitution semigroup} is a set of substitutions of the same alphabet that is closed under composition. We say that a substitution semigroup $S$ is \emph{generated} by a collection of substitutions $\mathscr{F}$ if each element of $S$ is a finite composition of functions from $\mathscr{F}$. We describe $\mathscr{F}$ as a \emph{generating set} for $S$. The substitution semigroup $S$ is said to be \emph{finitely generated} if it has a finite generating set.

We have defined $S$ by its action on $\mathscr{A}^+$; there are also two other actions of $S$ of interest to us. The first is on the collection of infinite words $\mathscr{A}^\mathbb{N}$. Given a substitution $f$ in $S$ and an infinite word $a_1a_2\dotsc$, where $a_i\in\mathscr{A}$, we define 
\[
f(a_1a_2\dotsc)=f(a_1)f(a_2)\dotsc.
\]
As promised, this defines an action of the semigroup $S$ on $\mathscr{A}^{\mathbb{N}}$. Notice that if $f\in S$, and if $u,v\in\widetilde{\mathscr{A}}$ share the same first letter (that is, $\pi_1(u)=\pi_1(v)$), then
\[
d(f(u),f(v)) \leqslant  d(u,v).
\]
Hence the semigroup action is continuous.

The second action of $S$ that we consider is an action on the alphabet $\mathscr{A}$ itself. For $f\in \mathscr{F}$ and $a\in\mathscr{A}$, we define
\[
f[a] = \pi_1(f(a)).
\]
Thus $f[a]$ is the first letter of $f(a)$. The square brackets in $f[a]$ distinguish this single letter $f[a]$ from the word $f(a)$. This action on $\mathscr{A}$ can be visualised using the finite directed graph $G_{\mathscr{F}}$ with vertex set $\mathscr{A}$ and, for each $f\in \mathscr{F}$ and $a\in\mathscr{A}$, a directed edge from $a$ to $f[a]$. Note that $G_{\mathscr{F}}$ may include multiple edges between any pair of vertices and edges from any vertex to itself (loops). We call $G_{\mathscr{F}}$ the \emph{first-letter graph} for $\mathscr{F}$. This graph is used to frame some of our later results.

For example, consider the semigroup $S$ generated by the three substitutions
\[
f\colon\
\begin{matrix}
a&\longmapsto&ea\\
b&\longmapsto&de\\
c&\longmapsto&ce\\
d&\longmapsto&da\\
e&\longmapsto&eb
\end{matrix}\,,
\qquad
g\colon\
\begin{matrix}
a&\longmapsto&bc\\
b&\longmapsto&bd\\
c&\longmapsto&ce\\
d&\longmapsto&db\\
e&\longmapsto&ec
\end{matrix}\,,
\qquad
h\colon\
\begin{matrix}
a&\longmapsto&ca\\
b&\longmapsto&cb\\
c&\longmapsto&cc\\
d&\longmapsto&ed\\
e&\longmapsto&dc
\end{matrix}\,.
\]

The first-letter graph for $\mathscr{F}=\{f,g,h\}$ is shown in Figure~\ref{figC}, with labels for loops omitted.

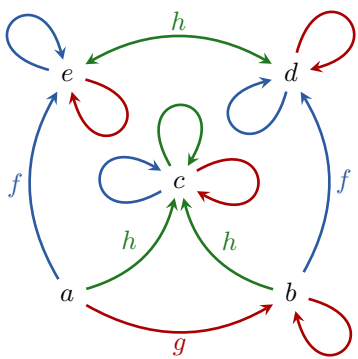
\begin{figure}[ht]
\begin{centering}
\begin{tikzcd}[column sep=1cm, row sep=1cm]
e
\ar[rr, leftrightarrow, line width=1, hcol, bend left,"h"]
\ar[loop, line width=1, gcol, out=345, in=285, distance=35]
\ar[loop, line width=1, fcol, out=165, in=105, distance=35]
&&d
\ar[loop, line width=1, gcol, out=75, in=15, distance=35]
\ar[loop, line width=1, fcol, out=255, in=195, distance=35]
\\
&c
\ar[loop, line width=1, hcol, out=120, in=60, distance=35]
\ar[loop, line width=1, gcol, out=30, in=330, distance=35]
\ar[loop, line width=1, fcol, out=210, in=150, distance=35]
&
\\
a
\ar[ru, line width=1, hcol, bend right,"h"]
\ar[uu, line width=1, fcol, bend left,"f"]
\ar[rr, line width=1, gcol, bend right,"g",swap]
&& b
\ar[lu, line width=1, hcol, bend left,"h", swap]
\ar[uu, line width=1, fcol, bend right,"f",swap]
\ar[loop, line width=1, gcol, out=345, in=285, distance=35]
\end{tikzcd}
\caption{First-letter graph for $\mathscr{F}=\{f,g,h\}$}
\label{figC}
\end{centering}
\end{figure}

Two vertices $a$ and $b$ of a first-letter graph $G_{\mathscr{F}}$ (for some finite set of substitutions $\mathscr{F}$) are said to be \emph{strongly connected} if there is a directed walk in $G_{\mathscr{F}}$ from $a$ to $b$ and another directed walk from $b$ to $a$. In other words, $a$ and $b$ are strongly connected if there exists $f,g\in S$ with $f[a]=b$ and $g[b]=a$. Let $\mathscr{A}_0$ denote the subset of $\mathscr{A}$ comprising those vertices  of $G_{\mathscr{F}}$ that are strongly connected to themselves. The property of being strongly connected is an equivalence relation on $\mathscr{A}_0$; the equivalence classes are called \emph{strongly connected components} of $G_{\mathscr{F}}$. 

A \emph{terminal component} of $G_{\mathscr{F}}$ is a strongly connected component $A$ for which every walk with initial vertex in $A$ also has final vertex in $A$. In other words, a terminal component is a strongly connected component $A$ with the property that if $a\in A$ and $g\in S$, then the vertex $g[a]$ also belongs to $A$.

For example, in Figure~\ref{figC} we have $\mathscr{A}_0=\{b,c,d,e\}$ and the strongly connected components of $G_{\mathscr{F}}$ are $\{b\}$, $\{c\}$, and $\{d,e\}$. The terminal components are $\{c\}$ and $\{d,e\}$.

\section{Fixed-letter-free substitution semigroups}\label{secC}

In this section we prove that the fixed-letter-free property for a substitution semigroup is equivalent to a growth condition for images of letters under the semigroup (Theorem~\ref{thmG}). 

We begin with a lemma that has no doubt been proved many times before; however, the proof is sufficiently short and the result sufficiently important that it merits inclusion.


\begin{lemma}\label{lemA}
Let $\mathscr{X}$ be a finite alphabet and let $(u_n)$ be an infinite sequence of words over $\mathscr{X}$ for which $|u_n|\to\infty$ as $n\to\infty$. Then there is a subsequence $(v_n)$ of $(u_n)$ of the form
\[
v_n=x_1x_2\dotsc x_n w_n,\quad n=1,2,\dotsc,
\]
where $x_1,x_2,\dotsc$ are letters of $\mathscr{X}$ and $w_1,w_2,\dotsc$ are words over $\mathscr{X}$. 
\end{lemma}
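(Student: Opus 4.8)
The plan is to build an infinite word $x_1x_2\dotsc$ one letter at a time by repeated application of the pigeonhole principle, and then to extract the required subsequence diagonally; this is essentially König's lemma applied to the tree of finite prefixes. The two hypotheses play complementary roles: the finiteness of $\mathscr{X}$ is what makes the pigeonhole step possible, while the condition $|u_n|\to\infty$ guarantees that at every stage infinitely many of the words are long enough to possess the next letter.

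First I set $T_0=\mathbb{N}$ and construct, by induction on $k$, a nested chain of infinite index sets $T_0\supseteq T_1\supseteq T_2\supseteq\dotsc$ together with letters $x_1,x_2,\dotsc$ of $\mathscr{X}$ such that $\pi_i(u_n)=x_i$ for all $i\leqslant k$ and all $n\in T_k$. For the inductive step, suppose $T_k$ and $x_1,\dotsc,x_k$ have been found. Because $|u_n|\to\infty$ while $T_k$ is infinite, infinitely many indices $n\in T_k$ satisfy $|u_n|>k$, so that $\pi_{k+1}(u_n)$ is a genuine letter of $\mathscr{X}$ for infinitely many $n\in T_k$; since $\mathscr{X}$ is finite, the pigeonhole principle supplies a letter $x_{k+1}$ for which the set $T_{k+1}=\{n\in T_k : |u_n|>k \text{ and } \pi_{k+1}(u_n)=x_{k+1}\}$ is infinite. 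This completes the induction.

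Finally I would select indices $n_1<n_2<\dotsc$ diagonally, choosing $n_k\in T_k$ with $|u_{n_k}|>k$ and $n_k>n_{k-1}$, which is possible because infinitely many elements of $T_k$ meet the length requirement. Setting $v_k=u_{n_k}$, each $v_k$ begins with $x_1x_2\dotsc x_k$ by the defining property of $T_k$, and since $|v_k|>k$ the remaining letters form a nonempty word $w_k$ over $\mathscr{X}$, yielding $v_k=x_1x_2\dotsc x_k w_k$ as required.

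I do not expect a genuine obstacle here, as the construction is a routine compactness argument. The only points that need care are keeping each $T_k$ infinite---which is exactly where the two hypotheses must be combined at the inductive step---and ensuring that each suffix $w_k$ is a legitimate (nonempty) word in the sense of the paper, which is secured by insisting on the strict inequality $|u_{n_k}|>k$ when extracting the subsequence.
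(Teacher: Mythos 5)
Your proposal is correct and follows essentially the same route as the paper: a nested chain of infinite index sets obtained by pigeonhole on the next letter, followed by a diagonal selection. The only (harmless) difference is bookkeeping — the paper folds the length condition $|u_k|>n$ into the definition of each index set $I_n$, whereas you impose it partly at the inductive step and partly when extracting the subsequence.
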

\begin{proof}
We will define a chain $I_0\supseteq I_1\supseteq I_2 \supseteq \dotsb$ of infinite subsets of $\mathbb{N}$ (where $I_0=\mathbb{N}$) and a sequence of letters $x_1,x_2,\dotsc$ of $\mathscr{X}$ with the property that, for each $k\in I_n$, we have $|u_k|> n$ and $\pi_i(u_k)=x_i$, for $i=1,2,\dots,n$.

To do this, suppose that $I_0,I_1,\dots,I_{n-1}$ and $x_1,x_2,\dots,x_{n-1}$ have been constructed with this property, where $n\geqslant 1$. Let $I_{n-1}^*$ be the infinite subset of $I_{n-1}$ of those integers $k$ for which $|u_k|> n$. Now, there are only finitely many choices for the letter $\pi_n(u_k)$, for $k\in I_{n-1}^*$. Since $I_{n-1}^*$ is infinite, we can choose an infinite subset $I_n$ of $I_{n-1}^*$ and a letter $x_n$ of $\mathscr{X}$ for which $\pi_n(u_k)=x_n$, for all $k\in I_n$. Thus $I_0,I_1,\dots,I_{n}$ and $x_1,x_2,\dots,x_{n}$ have the required property, and the existence of the full chain $ I_1, I_2, \dotsc$  and sequence $x_1,x_2,\dotsc$ follows from the axiom of dependent choices.

We now choose $i_n\in I_n$, for $n=1,2,\dotsc$, and define $v_n=u_{i_n}$. This sequence $(v_n)$ is of the form specified in the lemma.
\end{proof}

Sometimes we apply the reverse form of Lemma~\ref{lemA}, in which $v_n=w_n x_n x_{n-1}\dotsc x_1$, for $x_i\in\mathscr{X}$ and $w_n\in \mathscr{X}^+$. This can be obtained quickly from Lemma~\ref{lemA} by judicious use of the operation that reverses the letters of a word.



The next lemma provides conditions under which a substitution must have a fixed letter.

\begin{lemma}\label{lemB}
Let $f$ be a substitution of an alphabet $\mathscr{A}$ and let $u$ be a word over $\mathscr{A}$ with $|f(u)|=|u|$ and $\pi_1(f(u))=\pi_1(u)$. Then $f$ fixes the letter $\pi_1(u)$. 
\end{lemma}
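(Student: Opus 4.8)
The plan is to exploit the defining property that substitutions are non-erasing: each $f(a)$ is a nonempty word, so $|f(a)|\geqslant 1$ for every letter $a$. First I would write $u=a_1a_2\dotsc a_n$ with each $a_i\in\mathscr{A}$, so that by the homomorphism property $f(u)=f(a_1)f(a_2)\dotsc f(a_n)$. This immediately gives the length identity $|f(u)|=\sum_{i=1}^{n}|f(a_i)|$, and since every summand is at least $1$ we obtain the inequality $|f(u)|\geqslant n=|u|$.

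The key step is to feed the hypothesis $|f(u)|=|u|$ into this inequality. Equality can hold only if $|f(a_i)|=1$ for every $i$; in particular $|f(a_1)|=1$, so $f(a_1)$ is a single letter and therefore coincides with its own first letter, $f(a_1)=\pi_1(f(a_1))$.

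To finish, I would combine this with the first-letter hypothesis. Because $f(u)$ begins with the (nonempty) block $f(a_1)$, we have $\pi_1(f(u))=\pi_1(f(a_1))=f(a_1)$, while $\pi_1(u)=a_1$. The assumption $\pi_1(f(u))=\pi_1(u)$ then reads $f(a_1)=a_1$, which is exactly the statement that $f$ fixes the letter $\pi_1(u)=a_1$. There is no genuine obstacle here — the whole argument is a short counting computation — and the only point that must be invoked carefully is the non-erasing convention for substitutions, since it is precisely the bound $|f(a_i)|\geqslant 1$ that converts the length equality into the conclusion $|f(a_1)|=1$.
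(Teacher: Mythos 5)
Your proposal is correct and follows essentially the same route as the paper's own proof: decompose $u$ into letters, use the non-erasing property to force $|f(a_1)|=1$ from the length equality, and then identify $f(a_1)$ with $\pi_1(f(u))=\pi_1(u)=a_1$. No gaps.
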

\begin{proof}
Let $u=a_1a_2\dotsc a_n$, where $a_i\in\mathscr{A}$. Then $a_1=\pi_1(u)$ and 
\[
|f(a_1)|+|f(a_2)|+\dotsb +|f(a_n)|=|f(u)|=|u|=n. 
\]
Hence $|f(a_1)|=|f(a_2)|=\dots = |f(a_n)|=1$; in particular, $|f(a_1)|=1$. It follows that $f(a_1)$ is a single letter. Now, $\pi_1(f(a_1))=\pi_1(f(u))$ (because $a_1=\pi_1(u)$), so we see that
\[
f(a_1)=\pi_1(f(a_1))=\pi_1(f(u))=\pi_1(u)=a_1,
\]
as required.
\end{proof}

\begin{corollary}\label{corB}
Let $f$ be a substitution of an alphabet $\mathscr{A}$. Then $f$ has a fixed letter in $\mathscr{A}$ if and only if it has a fixed word in $\mathscr{A}^+$.
\end{corollary}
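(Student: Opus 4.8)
The plan is to treat the two implications separately and to observe that essentially all of the substance is already contained in Lemma~\ref{lemB}; the corollary merely packages that lemma together with the trivial remark that letters are words.

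For the forward implication, I would argue that a fixed letter is automatically a fixed word. If $a\in\mathscr{A}$ satisfies $f(a)=a$, then $a$, viewed as a word of length one in $\mathscr{A}^+$, is a fixed word over $\mathscr{A}$. This direction requires no computation.

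For the converse, I would begin with a fixed word $u\in\mathscr{A}^+$, so that $f(u)=u$. From this single equality I would extract the two hypotheses needed to invoke Lemma~\ref{lemB}, namely $|f(u)|=|u|$ and $\pi_1(f(u))=\pi_1(u)$; both are immediate, since $f(u)$ and $u$ are literally the same word and therefore agree in length and in first letter. Lemma~\ref{lemB} then yields that $f$ fixes the letter $\pi_1(u)$, which is the required fixed letter in $\mathscr{A}$.

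I expect no genuine obstacle here: the combinatorial work (deducing that every image $f(a_i)$ has length one, and hence that the first letter is fixed) has already been carried out in Lemma~\ref{lemB}. The only point deserving even a moment's care is checking that the two hypotheses of that lemma really do follow from the equality $f(u)=u$, but this is immediate.
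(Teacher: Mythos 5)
Your proof is correct and follows exactly the same route as the paper: the easy direction is that a fixed letter is a fixed word of length one, and the substantive direction applies Lemma~\ref{lemB} to a fixed word $u$, noting that $f(u)=u$ trivially supplies both hypotheses $|f(u)|=|u|$ and $\pi_1(f(u))=\pi_1(u)$. Nothing is missing.
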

\begin{proof}
Suppose that $f$ fixes a word $w$ over $\mathscr{A}$. Then $f(w)=w$. Applying Lemma~\ref{lemB}, we see that $f$ fixes the letter $\pi_1(w)$. The converse implication is immediate.
\end{proof}

We can now prove the main theorem of this section. In the proof, we use the map $F$ from $\mathscr{F}^+$ to $S$ that sends the word $f_1f_2\dotsc f_n$ in $\mathscr{F}^+$ to the substitution $f_1\circ f_2 \circ \dotsb \circ f_n$ in $S$.

\begin{theorem}\label{thmG}
Let $S$ be a substitution semigroup with finite generating set $\mathscr{F}$. Then $S$ has the fixed-letter-free property if and only if
\[
\min \{ |f_1\circ f_2\circ \dots\circ  f_n(a)| : f_i \in \mathscr{F}\} \to \infty \quad \text{as $n\to\infty$,}
\]
for each $a\in\mathscr{A}$.
\end{theorem}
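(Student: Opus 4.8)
The plan is to prove the two implications separately, each via its contrapositive, and in both to exploit the non-erasing bound $|h(w)|\ge|w|$, valid for every substitution $h$ and word $w$. The direction ``growth condition $\Rightarrow$ fixed-letter-free'' is the easy one. Arguing contrapositively, if $S$ is not fixed-letter-free then some $g\in S$ fixes a letter $a$, say $g(a)=a$. Writing $g=f_1\circ\dotsb\circ f_k$ with $f_i\in\mathscr{F}$ and $k\ge1$, the power $g^{j}$ is a composition of $jk$ generators that still satisfies $g^{j}(a)=a$, so $|g^{j}(a)|=1$. Hence the minimum in the statement equals $1$ along the subsequence $n=jk$ and does not tend to infinity. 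The substance of the theorem is the reverse implication.

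For the hard direction I would again use the contrapositive: assuming the minimum fails to tend to infinity for some letter $a$, I would manufacture an element of $S$ with a fixed letter. The failure supplies a constant $C$ and infinitely many indices $n$, for each of which there is a word $\omega_n\in\mathscr{F}^+$ of length $n$ with $|F(\omega_n)(a)|\le C$. The crucial manoeuvre is to apply the \emph{reverse} form of Lemma~\ref{lemA} to the sequence $(\omega_n)$, regarded as words over the finite alphabet $\mathscr{F}$. This yields a coherent sequence $x_1,x_2,\dotsc$ in $\mathscr{F}$ and a subsequence in which each $\omega_n$ ends in $x_nx_{n-1}\dotsc x_1$. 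Since $F$ is a homomorphism, $F(\omega_n)$ factors as $F(w_n)\circ p_n$ with $w_n\in\mathscr{F}^+$ and $p_n=x_n\circ\dotsb\circ x_1$, so the non-erasing bound gives $|p_n(a)|\le|F(\omega_n)(a)|\le C$ for every $n$.

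The payoff of the reverse form is that each new substitution is composed on the \emph{outside}: because $p_{n+1}(a)=x_{n+1}(p_n(a))$, the lengths $|p_n(a)|$ are non-decreasing in $n$. Being bounded by $C$, they are eventually constant, say equal to $L$ for $n\ge N$. I would then track the first letters $c_n=\pi_1(p_n(a))$ for $n\ge N$; as they lie in the finite set $\mathscr{A}$, pigeonhole gives $N\le m<n$ with $c_m=c_n$. Setting $q=x_n\circ\dotsb\circ x_{m+1}\in S$, we have $p_n(a)=q(p_m(a))$ with $|p_n(a)|=|p_m(a)|=L$; comparing lengths forces $q$ to send every letter of $p_m(a)$ to a single letter, so $q(c_m)$ is the single letter $\pi_1(p_n(a))=c_n=c_m$. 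Thus $q\in S$ fixes the letter $c_m$ (this last step is exactly Lemma~\ref{lemB}), contradicting the fixed-letter-free hypothesis.

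The main obstacle, and the point on which the whole argument turns, is securing the \emph{correct} monotonicity. A naive extraction -- for example König's lemma applied to the forward-growing compositions $f_1\circ\dotsb\circ f_n$ -- composes new generators on the inside, and then $|f_1\circ\dotsb\circ f_n(a)|$ need not be monotone in $n$, so ``bounded infinitely often'' cannot be upgraded to ``eventually constant''. Reversing the roles via the reverse form of Lemma~\ref{lemA} is precisely what arranges the compositions so that each additional substitution acts outermost, restoring monotonicity and allowing the closing pigeonhole-and-length comparison to produce the fixed letter.
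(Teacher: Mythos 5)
Your proposal is correct and follows essentially the same route as the paper: the easy direction via a power of a letter-fixing element, and the hard direction via the reverse form of Lemma~\ref{lemA} to obtain outermost compositions $p_n=x_n\circ\dotsb\circ x_1$ with $|p_n(a)|$ non-decreasing and bounded, followed by pigeonhole on first letters and Lemma~\ref{lemB}. Your closing remark about why the reverse (rather than forward) extraction is needed to secure monotonicity is exactly the point the paper's proof exploits.
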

\begin{proof}
Suppose first that there is a substitution $g$ in $S$ and a letter $a$ in $\mathscr{A}$ with $g(a)=a$. We can find generators $g_1,g_2,\dotsc,g_m$ in $\mathscr{F}$ with  $g=g_1\circ g_2\circ \dotsb\circ  g_m$. Then 
\[
|(g_1\circ g_2\circ \dotsb\circ  g_m)^n(a)|=|g^n(a)|=|a|=1,
\]
for all positive integers $n$. Consequently, 
\[
\min \{ |f_1\circ f_2\circ \dotsb\circ  f_n(a)| : f_i \in \mathscr{F}\} \nrightarrow \infty \quad \text{as $n\to\infty$.}
\]

Conversely, suppose that 
\[
\min \{ |f_1\circ f_2\circ \dotsb\circ  f_n(a))| : f_i \in \mathscr{F}\} \nrightarrow \infty \quad \text{as $n\to\infty$,}
\]
for some letter $a$ in $\mathscr{A}$. Then there is a positive integer $N$ and a sequence $(u_n)$ in $\mathscr{F}^+$ with $|u_1|<|u_2|<\dotsb$ and
\[
|F(u_n)(a)|\leqslant N,
\] 
for $n=1,2,\dotsc$. Since $\mathscr{F}$ is finite, we can apply Lemma~\ref{lemA} (reverse form) to find a subsequence $(v_n)$ of $(u_n)$ with $v_n=w_nf_nf_{n-1}\dotsc f_1$, where $f_i\in\mathscr{F}$ and $w_n\in \mathscr{F}^+$. Let $F_n=f_n\circ f_{n-1}\circ \dotsb\circ  f_1$. Then $F(v_n)=F(w_n)\circ F_n$, so
\[
|F_n(a)| \leqslant |F(w_n)(F_n(a))|=|F(v_n)(a)|\leqslant N.
\]
Additionally, $|F_{n+1}(a)| = |f_{n+1}(F_n(a))| \geqslant |F_n(a)|$, so the sequence $|F_1(a)|,|F_2(a)|,\dotsc$ is eventually constant. Then, because $\mathscr{A}$ is finite, we can find positive integers $r$ and $s$ (with $r>s$) for which $|F_r(a)|=|F_s(a)|$ and $\pi_1(F_r(a))=\pi_1(F_s(a))$. We can now apply Lemma~\ref{lemB} with $f=f_r\circ f_{r-1}\circ \dotsb \circ f_{s+1}$ and $u=F_s(a)$ to see that $f$ fixes the letter $\pi_1(u)$, as required.\end{proof}

\section{Forward limit sets}\label{secD}

In this section we explore some of the basic properties of forward limit sets. Let $S$ be a semigroup generated by a finite set of substitutions $\mathscr{F}$ of a finite alphabet $\mathscr{A}$. We recall that the forward limit set of a subset $A$ of $\mathscr{A}$ for $S$ is $\Lambda(A)=\overline{S(A)}\setminus S(A)$. Since $\mathscr{A}^+$ has the discrete topology, the set $\Lambda(A)$ is contained wholly within $\mathscr{A}^\mathbb{N}$.  Evidently it is a closed subset of  $\mathscr{A}^\mathbb{N}$ (it is the boundary of $S(A)$ in $\widetilde{\mathscr{A}}$). We will prove that it is $S$-invariant.

\begin{lemma}\label{lemS}
The forward limit set $\Lambda(A)$ of a subset $A$ of $\mathscr{A}$ for a substitution semigroup $S$ is $S$-invariant.  
\end{lemma}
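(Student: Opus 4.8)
The plan is to show that for any $f\in S$, we have $f(\Lambda(A))\subseteq\Lambda(A)$. Recall that $\Lambda(A)=\overline{S(A)}\setminus S(A)$, and that $\Lambda(A)$ lies entirely within $\mathscr{A}^{\mathbb{N}}$. So let $x\in\Lambda(A)$ and let $f\in S$ be arbitrary; I must verify two things, namely that $f(x)\in\overline{S(A)}$ and that $f(x)\notin S(A)$.

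For the first point, since $x\in\overline{S(A)}$, there is a sequence $(x_n)$ in $S(A)$ converging to $x$. By continuity of the action of $f$ on $\widetilde{\mathscr{A}}$ (noted in the discussion following the definition of the metric, via the inequality $d(f(u),f(v))\leqslant d(u,v)$ when $u$ and $v$ share a first letter), we obtain $f(x_n)\to f(x)$. Each $x_n$ has the form $s_n(a_n)$ for some $s_n\in S$ and $a_n\in A$, so $f(x_n)=(f\circ s_n)(a_n)\in S(A)$ because $f\circ s_n\in S$. Hence $f(x)$ is a limit of points of $S(A)$, giving $f(x)\in\overline{S(A)}$.

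For the second point, I need $f(x)\notin S(A)$. Here the key observation is that $x$ is an \emph{infinite} word (as $x\in\Lambda(A)\subseteq\mathscr{A}^{\mathbb{N}}$), and since substitutions are non-erasing, $f(x)$ is again an infinite word. On the other hand, every element of $S(A)$ is of the form $s(a)$ with $a\in A\subseteq\mathscr{A}$ a single letter, and such an image $s(a)$ is a \emph{finite} word, hence lies in $\mathscr{A}^+$. Since $\mathscr{A}^+$ and $\mathscr{A}^{\mathbb{N}}$ are disjoint, an infinite word cannot belong to $S(A)$, so $f(x)\notin S(A)$. Combining the two points yields $f(x)\in\overline{S(A)}\setminus S(A)=\Lambda(A)$, as desired.

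The argument is largely routine, with the two essential inputs being the continuity of the semigroup action (for closure-preservation) and the finite-versus-infinite distinction (for staying out of $S(A)$). I do not anticipate a genuine obstacle, but the point requiring the most care is the second one: it relies crucially on the fact that $A$ consists of single letters, so that $S(A)$ contains only finite words, whereas $\Lambda(A)$ and its images under $S$ consist of infinite words. This is exactly why $\Lambda(A)$ is the \emph{forward} limit set rather than the whole orbit, and it is what makes the invariance clean.
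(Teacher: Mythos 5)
Your proof is correct and follows essentially the same route as the paper's: both rely on continuity of the action of $S$ on $\widetilde{\mathscr{A}}$ to push a sequence in $S(A)$ converging to $x$ forward to one converging to $f(x)$, with the semigroup property giving $f\circ s_n\in S$. Your explicit check that $f(x)\notin S(A)$ (via the finite-versus-infinite distinction) is a detail the paper leaves implicit, but it is the right justification.
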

\begin{proof}
Let $x\in \Lambda(A)$ and $g\in S$. Then there exist sequences $(F_n)$ in $S$ and $(a_n)$ in $A$ with $F_n(a_n)\to x$. Since $\mathscr{A}$ is finite we can assume by passing to a subsequence that in fact all the letters $a_n$ are equal, to some letter $a$. Hence $F_n(a)\to x$. Observe that $(g\circ F_n)$ is also a sequence in $S$ and $g\circ F_n(a)\to g(x)$ by continuity of $g$. Hence $g(x)\in \Lambda(A)$. Thus $\Lambda(A)$ is $S$-invariant, as required. 
\end{proof}

We record another elementary lemma, used often. Here and henceforth we write $\Lambda(a)$ for $\Lambda(\{a\})$, where $a$ is a letter of $\mathscr{A}$.

\begin{lemma}
Let $S$ be a substitution semigroup. Then 
\[
\Lambda(A) = \displaystyle\bigcup_{a\in A}\Lambda(a),
\]
for any subset $A$ of $\mathscr{A}$.
\end{lemma}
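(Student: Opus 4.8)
The plan is to prove the set equality $\Lambda(A) = \bigcup_{a\in A}\Lambda(a)$ by establishing both inclusions, working directly from the definition $\Lambda(A) = \overline{S(A)} \setminus S(A)$. The key structural fact I would exploit is that $S(A) = \bigcup_{a\in A} S(a)$, which is immediate from the definition $S(A) = \{s(x) : s\in S,\, x\in A\}$. Since $A$ is a \emph{finite} set (being a subset of the finite alphabet $\mathscr{A}$), the closure operation distributes over the union: that is, $\overline{S(A)} = \overline{\bigcup_{a\in A} S(a)} = \bigcup_{a\in A}\overline{S(a)}$, because closure commutes with finite unions in any topological space. This finiteness is exactly what makes the whole argument clean, and I would flag it explicitly.

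For the inclusion $\bigcup_{a\in A}\Lambda(a) \subseteq \Lambda(A)$, I would take $x\in\Lambda(a)$ for some $a\in A$, so $x\in\overline{S(a)}\subseteq\overline{S(A)}$ and $x\notin S(a)$. To conclude $x\in\Lambda(A)$ I must check $x\notin S(A)$. Here I would use that $\Lambda(a)\subseteq\mathscr{A}^{\mathbb{N}}$ (forward limit sets live in the infinite words, as noted in Section~\ref{secD}), whereas $S(A)\subseteq\mathscr{A}^+$ consists only of finite words; hence $x$, being infinite, cannot lie in $S(A)$. This gives $x\in\overline{S(A)}\setminus S(A)=\Lambda(A)$.

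For the reverse inclusion $\Lambda(A)\subseteq\bigcup_{a\in A}\Lambda(a)$, I would take $x\in\Lambda(A)$, so $x\in\overline{S(A)}=\bigcup_{a\in A}\overline{S(a)}$ by the distributivity above. Thus $x\in\overline{S(a)}$ for some $a\in A$. Again since $x\in\mathscr{A}^{\mathbb{N}}$ is infinite and $S(a)\subseteq\mathscr{A}^+$ is finite words, we have $x\notin S(a)$, so $x\in\overline{S(a)}\setminus S(a)=\Lambda(a)\subseteq\bigcup_{a\in A}\Lambda(a)$.

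I do not expect any genuine obstacle here; the lemma is essentially bookkeeping. The one point requiring care — and the only place the argument could go wrong — is the interplay between $S(A)$ (finite words) and the closure (which can contain infinite words): the subtlety is that $\Lambda(A)$ is defined by removing $S(A)$ from $\overline{S(A)}$, and I must ensure the ``removed'' part behaves compatibly with the decomposition. Because the removed set consists entirely of finite words while both $\Lambda(A)$ and each $\Lambda(a)$ consist entirely of infinite words, the set-theoretic difference distributes cleanly over the finite union without any overlap complications. I would therefore present the proof compactly, emphasising the finiteness of $A$ for the closure step and the $\mathscr{A}^+$ versus $\mathscr{A}^{\mathbb{N}}$ dichotomy for the difference step.
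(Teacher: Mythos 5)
Your proposal is correct and follows essentially the same route as the paper: decompose $S(A)=\bigcup_{a\in A}S(a)$, use finiteness of $A$ to distribute the closure over the union, and use the fact that $S(A)\subseteq\mathscr{A}^+$ while the limit sets lie in $\mathscr{A}^{\mathbb{N}}$ to handle the set difference. The paper merely packages the last step as the identity $\overline{S(a)}\setminus S(a)=\overline{S(a)}\setminus S(A)$ rather than as two separate inclusions, which is an immaterial difference.
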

\begin{proof}
We have $S(A)=\bigcup_{a\in A} S(a)$, by definition. Hence $\overline{S(A)}=\bigcup_{a\in A} \overline{S(a)}$ because $\mathscr{A}$ is finite. Note also that $\overline{S(a)}\setminus S(a)=\overline{S(a)}\setminus S(A)$, for $a\in A$, because $S(A)\subseteq \mathscr{A}^+$. Hence
\[
\bigcup_{a\in A} \Lambda(a)=\bigcup_{a\in A} (\overline{S(a)}\setminus S(a))=\bigcup_{a\in A} (\overline{S(a)}\setminus S(A))=\overline{S(A)}\setminus S(A)=\Lambda(A),
\]
as required. 
\end{proof}

Thus we can describe all forward limit sets for $S$ in terms of the forward limit sets $\Lambda(a)$, where $a\in\mathscr{A}$. We will explore how to use the first-letter graph $G_{\mathscr{F}}$ of $\mathscr{F}$ to understand how the forward limit sets of individual letters are related. The next two lemmas will assist us with this task.

\begin{lemma}\label{lemD}
Let $g$ be a substitution of an alphabet $\mathscr{A}$. Let $x,y\in\widetilde{\mathscr{A}}$, and suppose that $\pi_1(x)=\pi_1(y)$. Then 
\[
d(g(x),g(y))\leqslant \frac{1}{2^{|g(a)|}},
\]
where $a=\pi_1(x)$.
\end{lemma}
\begin{proof}
Let $x=a_1a_2\dotsc$ and $y=b_1b_2\dotsc$, for $a_i,b_j\in\mathscr{A}$, where $a_1=b_1=a$. Then $g(x)=g(a_1)g(a_2)\dotsc$ and $g(y)=g(b_1)g(b_2)\dotsc$. Consequently, the first $|g(a)|$ letters of $g(x)$ and $g(y)$ coincide, so the inequality follows.
\end{proof}

\begin{lemma}\label{lemK}
 Let $(x_n)$ and $(y_n)$ be sequences in $\widetilde{\mathscr{A}}$ with $\pi_1(x_n)=\pi_1(y_n)$, for $n=1,2,\dotsc$, and let $x\in\mathscr{A}^\mathbb{N}$. Suppose that $(F_n)$ is a sequence of substitutions of $\mathscr{A}$ for which $F_n(x_n)\to x$ and $|F_n(a_n)|\to\infty$, where $a_n=\pi_1(x_n)$. Then $F_n(y_n)\to x$.
\end{lemma}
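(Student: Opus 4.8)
The plan is to bound the distance between $F_n(x_n)$ and $F_n(y_n)$ directly by means of Lemma~\ref{lemD}, and then to transfer the convergence of the first sequence to the second using the triangle inequality. The crucial feature of the hypotheses is that $x_n$ and $y_n$ share their first letter, namely $a_n=\pi_1(x_n)=\pi_1(y_n)$. This is precisely the condition required to apply Lemma~\ref{lemD} with $g=F_n$, which yields
\[
d(F_n(x_n),F_n(y_n))\leqslant \frac{1}{2^{|F_n(a_n)|}}.
\]

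First I would use the hypothesis $|F_n(a_n)|\to\infty$ to conclude that the right-hand side above tends to $0$ as $n\to\infty$; hence $d(F_n(x_n),F_n(y_n))\to 0$. Next I would invoke the assumption $F_n(x_n)\to x$ together with the triangle inequality in the form
\[
d(F_n(y_n),x)\leqslant d(F_n(y_n),F_n(x_n))+d(F_n(x_n),x),
\]
observing that both terms on the right tend to $0$. It follows that $F_n(y_n)\to x$, which is the desired conclusion.

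I do not anticipate any serious obstacle: the result is essentially a clean consequence of Lemma~\ref{lemD}, with the growth condition $|F_n(a_n)|\to\infty$ supplying the decay that forces $F_n(x_n)$ and $F_n(y_n)$ to coalesce in the limit. The only point meriting a moment's attention is verifying that the hypotheses of Lemma~\ref{lemD} are genuinely in force for each $n$ -- that is, that the first letters of $x_n$ and $y_n$ agree -- but this is exactly the standing assumption. Note also that $x_n$ and $y_n$ are allowed to lie in $\widetilde{\mathscr{A}}$ rather than just $\mathscr{A}^{\mathbb{N}}$, and Lemma~\ref{lemD} is stated at this level of generality, so no separate treatment of finite and infinite words is needed.
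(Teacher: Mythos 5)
Your proof is correct and takes essentially the same approach as the paper: both rest on Lemma~\ref{lemD} together with the hypothesis $|F_n(a_n)|\to\infty$ and the triangle inequality. The only (immaterial) difference is that you apply Lemma~\ref{lemD} once, directly to the pair $(x_n,y_n)$, whereas the paper applies it twice, routing through the one-letter word $a_n$ and obtaining the bound $2/2^{|F_n(a_n)|}$ instead of $1/2^{|F_n(a_n)|}$.
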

\begin{proof}
By Lemma~\ref{lemD}, we have
\[
d(F_n(x_n),F_n(y_n))\leqslant d(F_n(x_n),F_n(a_n))+d(F_n(a_n),F_n(y_n))\leqslant \frac{2}{2^{|F_n(a_n)|}}.
\]
Since $|F_n(a_n)|\to\infty$, we see that $F_n(y_n)\to x$, as required.
\end{proof}

Now we return to relating forward limit sets using the first-letter graph.

\begin{lemma}\label{lemE}
Let $S$ be a substitution semigroup with finite generating set $\mathscr{F}$. Suppose that there is a directed walk in $G_{\mathscr{F}}$ from one letter $a$ to another letter $b$. Then $\Lambda(b)\subseteq\Lambda(a)$.
\end{lemma}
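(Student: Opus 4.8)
The plan is to reduce the combinatorial hypothesis about the first-letter graph to a single statement about the semigroup, and then transfer a convergent sequence witnessing membership in $\Lambda(b)$ over to one witnessing membership in $\Lambda(a)$. First I would unpack what a directed walk from $a$ to $b$ gives us. If $a = a_0 \to a_1 \to \dotsb \to a_k = b$ is such a walk, with the edge $a_{i-1} \to a_i$ coming from a generator $f_i \in \mathscr{F}$ (so $\pi_1(f_i(a_{i-1})) = a_i$), then I would set $g = f_k \circ f_{k-1} \circ \dotsb \circ f_1 \in S$ and show by induction that $\pi_1(g(a)) = b$. The inductive step is immediate from the substitution property: writing $f_{j-1}\circ \dotsb \circ f_1(a) = a_{j-1}u$, applying $f_j$ gives $f_j(a_{j-1})f_j(u)$, whose first letter is $\pi_1(f_j(a_{j-1})) = a_j$. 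This is exactly the reformulation of strong connectivity noted after the definition of $G_{\mathscr{F}}$, so I would treat it as a short, routine step.

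With $g \in S$ satisfying $\pi_1(g(a)) = b$ in hand, take any $x \in \Lambda(b)$ and choose a sequence $(F_n)$ in $S$ with $F_n(b) \to x$. Since forward limit sets lie in $\mathscr{A}^\mathbb{N}$ (as recorded in Section~\ref{secD}), the limit $x$ is an infinite word, so the finite words $F_n(b)$ cannot converge to it unless their lengths grow without bound; hence $|F_n(b)| \to \infty$. The key observation is that $b$ and $g(a)$ share the same first letter, namely $b$. I would therefore apply Lemma~\ref{lemK} with $x_n = b$, $y_n = g(a)$, and the given sequence $(F_n)$: the hypotheses $\pi_1(x_n) = \pi_1(y_n)$, $F_n(x_n) \to x$, and $|F_n(\pi_1(x_n))| = |F_n(b)| \to \infty$ all hold, so the conclusion is $F_n(y_n) = F_n(g(a)) \to x$. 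As each $F_n \circ g$ lies in $S$, this shows $x \in \overline{S(a)}$.

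It remains to check that $x \notin S(a)$, but this is automatic: $x$ is an infinite word whereas $S(a) \subseteq \mathscr{A}^+$ consists only of finite words. Hence $x \in \overline{S(a)} \setminus S(a) = \Lambda(a)$, and since $x \in \Lambda(b)$ was arbitrary we obtain $\Lambda(b) \subseteq \Lambda(a)$. The only place demanding genuine care is the application of Lemma~\ref{lemK}: its length hypothesis $|F_n(b)| \to \infty$ is what allows the prefix $b$ of $g(a)$ to dominate the image $F_n(g(a))$ and force it to the same limit as $F_n(b)$, and this hypothesis is supplied precisely by the infinitude of $x$. Everything else is bookkeeping about first letters and the non-erasing property of substitutions.
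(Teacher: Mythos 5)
Your proof is correct and follows essentially the same route as the paper's: extract $g\in S$ with $g[a]=b$ from the walk, then apply Lemma~\ref{lemK} with $x_n=b$ and $y_n=g(a)$ to transfer the convergence $F_n(b)\to x$ to $F_n\circ g(a)\to x$. Your explicit justification that $|F_n(b)|\to\infty$ (because the finite words converge to an infinite word) is a detail the paper leaves implicit, and is the right reason here since fixed-letter-freeness is not assumed.
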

\begin{proof}
That there is a directed walk in $G_{\mathscr{F}}$ from $a$ to $b$ implies that there is an element $g$ of $S$ with $g[a]=b$. We define $w=g(a)$, so $\pi_1(w)=b$. Suppose now that $x\in\Lambda(b)$. Then there is a sequence $(F_n)$ in $S$ with $F_n(b)\to x$.  Since $|F_n(b)|\to\infty$ we can apply Lemma~\ref{lemK} to see that $F_n(w)\to x$. Now, $F_n(w)=F_n\circ g(a)$, and $(F_n\circ g)$ is itself a sequence in $S$, so we see that $x\in \Lambda(a)$, as required. 
\end{proof}

An immediate corollary of Lemma~\ref{lemE} is that the forward limit sets of letters in the same strongly connected component of $G_{\mathscr{F}}$ coincide.

\begin{corollary}\label{corC}
Let $S$ be a substitution semigroup with finite generating set $\mathscr{F}$. Suppose that letters $a$ and $b$ lie in the same strongly connected component of  $G_{\mathscr{F}}$. Then $\Lambda(a)=\Lambda(b)$.
\end{corollary}

We finish this section with one final theorem on forward limit sets. In this theorem we use the notation $\pi_1(X)$ for the set $\{\pi_1(x): x\in X\}$. In the proof we use the map $F\colon \mathscr{F}^+\longrightarrow S$ defined in Section~\ref{secC} that sends $f_1f_2\dotsc f_n$ to $f_1\circ f_2\circ \dots \circ f_n$.

\begin{theorem}
Let $S$ be a fixed-letter-free substitution semigroup with finite generating set $\mathscr{F}$, and let $X$ be a forward limit set of some subset of $\mathscr{A}$ for $S$. Then $X=\Lambda(A)$, where $A=\pi_1(X)$.
\end{theorem}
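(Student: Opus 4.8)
The plan is to establish the two inclusions $\Lambda(A)\subseteq X$ and $X\subseteq\Lambda(A)$ separately, writing $X=\Lambda(B)$ for the subset $B\subseteq\mathscr A$ supplied by the hypothesis and using throughout the decomposition $\Lambda(A)=\bigcup_{a\in A}\Lambda(a)$. For the first inclusion I would fix $a\in A=\pi_1(X)$ and pick $x_0\in X$ with $\pi_1(x_0)=a$; then $x_0\in\Lambda(b)$ for some $b\in B$, so there is a sequence $(F_n)$ in $S$ with $F_n(b)\to x_0$. For all large $n$ the word $F_n(b)$ starts with $a$, hence $F_n[b]=a$ and there is a directed walk from $b$ to $a$ in $G_{\mathscr F}$. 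Lemma~\ref{lemE} then yields $\Lambda(a)\subseteq\Lambda(b)\subseteq\Lambda(B)=X$, and taking the union over $a\in A$ gives $\Lambda(A)\subseteq X$.

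The reverse inclusion $X\subseteq\Lambda(A)$ carries the real content, and I expect it to be the main obstacle. One cannot simply show $x\in\Lambda(\pi_1(x))$: the first letter of a limit word may be transient in $G_{\mathscr F}$, so no approximating composition need send $\pi_1(x)$ to a word beginning with $\pi_1(x)$, and then $\Lambda(\pi_1(x))$ would contain no word with first letter $\pi_1(x)$ at all. Instead, given $x\in X$, choose $b\in B$ and $(F_n)$ in $S$ with $F_n(b)\to x$. Since $\mathscr F$ is finite and $|F_n(b)|\to\infty$, each $F_n$ is the image under $F$ of an unboundedly long word over $\mathscr F$. I would peel generators off the \emph{outer} (leftmost) end one at a time: passing to a subsequence fixes the outermost generator as some $g_1\in\mathscr F$ and, by compactness of $\widetilde{\mathscr A}$, makes the remaining compositions converge to some $x_1$ with $g_1(x_1)=x$. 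Iterating and diagonalising produces generators $g_1,g_2,\dotsc\in\mathscr F$ and infinite words $x=x_0,x_1,x_2,\dotsc$ with $x_{k-1}=g_k(x_k)$, so that $x=g_1\circ\dotsb\circ g_k(x_k)$ for every $k$. Each $x_k$ is infinite (as $g_k$ is non-erasing) and is a limit of $S(b)$, hence $x_k\in\Lambda(b)\subseteq X$; in particular $v_k:=\pi_1(x_k)\in\pi_1(X)=A$.

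Now the sequence $(v_k)$ lies in the finite set $\mathscr A$, so some letter $w\in A$ occurs infinitely often, say $v_{k_1}=v_{k_2}=\dotsb=w$ with $k_1<k_2<\dotsb$. Setting $G_i=g_{k_1+1}\circ\dotsb\circ g_{k_{i+1}}\in S$, the telescoping relation $x_{k-1}=g_k(x_k)$ gives $x_{k_1}=G_i(x_{k_{i+1}})$, while $\pi_1(x_{k_{i+1}})=w=\pi_1(w)$. Lemma~\ref{lemD} then bounds $d\bigl(G_i(w),x_{k_1}\bigr)\le 2^{-|G_i(w)|}$. Here the fixed-letter-free hypothesis is decisive: $G_i$ is a composition of at least $i$ generators, so Theorem~\ref{thmG} forces $|G_i(w)|\to\infty$, whence $G_i(w)\to x_{k_1}$ and therefore $x_{k_1}\in\Lambda(w)$. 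Finally $x=g_1\circ\dotsb\circ g_{k_1}(x_{k_1})$ with $g_1\circ\dotsb\circ g_{k_1}\in S$, and $\Lambda(w)$ is $S$-invariant by Lemma~\ref{lemS}, so $x\in\Lambda(w)\subseteq\Lambda(A)$. This gives $X\subseteq\Lambda(A)$ and completes the argument.

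The delicate points, and the places where I would spend the most care, are the diagonal extraction of the peeling sequence $(x_k)$ (checking that every $x_k$ is an infinite word and so lands in $X$, which is what puts its first letter into $A$) and the recognition that the governing letter is a \emph{recurrently occurring} first letter $w\in A$ rather than $\pi_1(x)$ itself. Once $w$ is identified, Lemmas~\ref{lemD} and~\ref{lemS} together with the growth guaranteed by Theorem~\ref{thmG} dispatch the rest routinely; in particular, the fixed-letter-free assumption enters exactly to guarantee $|G_i(w)|\to\infty$, without which $G_i(w)$ could fail to converge to $x_{k_1}$.
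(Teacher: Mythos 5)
Your proposal is correct, but it reaches the harder inclusion by a different route than the paper. For $\Lambda(A)\subseteq X$ the paper argues directly: given $F_n(a)\to x$ with $a\in A$, it picks $y\in X$ with $\pi_1(y)=a$, applies Lemma~\ref{lemK} to get $F_n(y)\to x$, and concludes by $S$-invariance and closedness of $X$; your detour through $F_m[b]=a$, the first-letter graph and Lemma~\ref{lemE} is equivalent (Lemma~\ref{lemE} is itself a consequence of Lemma~\ref{lemK}) and has the mild advantage of not invoking invariance of $X$ at all. For $X\subseteq\Lambda(A)$ the paper performs a single split: it writes the approximating words as $w_n=u_nv_n$ with $|u_n|,|v_n|\to\infty$, pigeonholes so that $F(v_n)[b]$ is a constant letter $c$, shows $c\in A$ by extracting a convergent subsequence of $F(v_n)(b)$ whose limit lies in $X$, and then uses Lemma~\ref{lemK} once to replace $F(v_n)(b)$ by $c$. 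You instead peel one generator at a time and diagonalise to build a full desubstitution chain $x_{k-1}=g_k(x_k)$ with every $x_k\in\Lambda(b)\subseteq X$, then pigeonhole on a recurrently occurring first letter $w$; this is in effect the strong-invariance machinery the paper develops later (Lemma~\ref{lemJ}, Lemma~\ref{lemL}, Theorem~\ref{thmJ}) imported into this proof, with your explicit pigeonhole on $w$ standing in for the paper's appeal to $\Lambda(A)=\bigcup_{a\in A}\Lambda(a)$. Both arguments lean on the same three ingredients --- compactness of $\widetilde{\mathscr{A}}$, the length blow-up from Theorem~\ref{thmG} supplied by the fixed-letter-free hypothesis, and the contraction estimate of Lemma~\ref{lemD}/Lemma~\ref{lemK} --- so the trade-off is only that the paper's one-shot split is shorter, while your chain construction yields the slightly sharper conclusion $x\in\Lambda(w)$ for a single letter $w\in A$ and makes the connection to strong $S$-invariance transparent.
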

\begin{proof}
First we show that $\Lambda(A)\subseteq X$. To this end, choose $x\in \Lambda(A)$. Then there exists $a\in A$ and a sequence $(F_n)$ in $S$ with $F_n(a)\to x$. Since $A=\pi_1(X)$, we can find $y\in X$ with $\pi_1(y)=a$. Then $F_n(y)\to x$ by Lemma~\ref{lemK}. Observe that $F_n(y)\in X$, by $S$-invariance of $X$, and then $x\in X$, by closure of $X$. Hence $\Lambda(A) \subseteq X$.

Next we show that $X\subseteq \Lambda(A)$. Observe that $X=\Lambda(B)$, for some subset $B$ of $\mathscr{A}$. Choose $x\in X$. Then there exists $b\in B$ and a sequence of words $(w_n)$ over $\mathscr{F}$ for which $F(w_n)(b)\to x$. Note that $|w_n|\to\infty$. Choose further sequences $(u_n)$ and $(v_n)$ in $\mathscr{F}^+$ with $w_n=u_nv_n$ and $|u_n|\to\infty$ and $|v_n|\to\infty$. Since $\mathscr{A}$ is finite we can assume by passing to a subsequence that in fact all the letters $F(v_n)[b]$, for $n=1,2,\dotsc$, are equal to some letter $c$ in $\mathscr{A}$. 

Now, because $(\widetilde{\mathscr{A}},d)$ is compact, there is a subsequence of $(F(v_n)(b))$ that converges to some limit $y\in\widetilde{\mathscr{A}}$. Since $|v_n|\to\infty$ we see that $|F(v_n)(b)|\to\infty$ by Theorem~\ref{thmG}. Hence $y\in\mathscr{A}^{\mathbb{N}}$, so $y\in\Lambda(B)$. Since $F(v_n)[b]=c$ for each $n$, it follows that $\pi_1(y)=c$. So $c\in A$.

We have $F(u_n)(F(v_n)(b))\to x$ and $\pi_1(F(v_n)(b))=c$ for each $n$. Observe that $|F(u_n)(c)|\to\infty$, by Theorem~\ref{thmG}, because $|u_n|\to\infty$. Hence we can apply Lemma~\ref{lemK} with $x_n=F(v_n)(b)$, $y_n=c$, and $F_n=F(u_n)$ to see that $F(u_n)(c)\to x$. Hence $x\in \Lambda(A)$, so $X\subseteq \Lambda(A)$. Therefore $X= \Lambda(A)$, as required.
\end{proof}

\section{Forward limit sets and s-adic limits}\label{secE}

In this section we prove Theorem~\ref{thmA}. That theorem says that for a fixed-letter-free substitution semigroup $S$ generated by a finite set of substitutions $\mathscr{F}$, the forward limit set of $S$ is equal to the set of all s-adic limits of $\mathscr{F}$. The proof uses the map $F$ from $\mathscr{F}^+$ to $S$ defined in Section~\ref{secC}.

\begin{proof}[Proof of Theorem~\ref{thmA}]
Suppose that $x$ is an s-adic limit of $\mathscr{F}$. Then there are sequences $(f_n)$ in $\mathscr{F}$  and $(a_n)$ in $\mathscr{A}$ for which $F_n(a_n)\to x$ as $n\to\infty$, where $F_n=f_1\circ f_2\circ \dots \circ f_n$. Since $\mathscr{A}$ is finite, we can find an increasing sequence of positive integers $(n_k)$ and a letter $a\in\mathscr{A}$ with $a_{n_k}=a$, for $k=1,2,\dotsc$. Hence $F_{n_k}(a)\to x$ as $n_k\to\infty$. Consequently, $x\in\Lambda(a)$, so $x\in\Lambda$.

Conversely, suppose that $x\in \Lambda$. Then $x\in\Lambda(a)$, for some letter $a\in \mathscr{A}$. Hence there is a sequence of words $(u_n)$ in $\mathscr{F}^+$ with $|u_n|\to\infty$ and $F(u_n)(a)\to x$. By Lemma~\ref{lemA}, there is a subsequence $(v_n)$ of $(u_n)$ of the form $v_n=f_1f_2\dotsc f_n w_n$, for $n=1,2,\dotsc$, where $f_i\in\mathscr{F}$ and $w_n\in \mathscr{F}^+$. Let $F_n=f_1\circ f_2\circ \dotsb \circ f_n$,  $x_n=F(w_n)(a)$, and $a_n=\pi_1(x_n)$, for $n=1,2,\dotsc$. Then $F_n(x_n)=F(v_n)(a)$, so $F_n(x_n)\to x$. Since $|F_n(a_n)|\to \infty$, by Theorem~\ref{thmG}, we can apply Lemma~\ref{lemK} with $y_n=a_n$ to see that $F_n(a_n)\to x$. Therefore $x$ is an s-adic limit of $\mathscr{F}$, as required.
\end{proof}

Theorem~\ref{thmA} tells us that for each $x\in\Lambda$ we can find sequences $(f_n)$ in $\mathscr{F}$ and $(a_n)$ in $\mathscr{A}$ with $f_1\circ f_2\circ \dotsb \circ f_n(a_n)\to x$ as $n\to\infty$. The next theorem says that we can obtain a similar conclusion but with $f_n\in S$ (rather than $f_n\in\mathscr{F}$) and all letters $a_n$ equal.

\begin{theorem}\label{thmH}
Let $S$ be a fixed-letter-free substitution semigroup with finite generating set $\mathscr{F}$, and let $x\in\Lambda$. Then there exists a letter $a\in\mathscr{A}$ and a sequence $(g_n)$ in $S$ for which
\[
g_1\circ g_2\circ \dots \circ g_n(a)\to x \quad\text{as $n\to\infty$}.
\]
\end{theorem}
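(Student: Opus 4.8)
The plan is to start from the s-adic representation of $x$ provided by Theorem~\ref{thmA} and then to coalesce consecutive generators into finite blocks, each of which is a single element of $S$, choosing where to cut so that one letter recurs infinitely often and the composition telescopes neatly. In this way the varying letters $a_n$ of the s-adic representation get replaced by a single fixed letter, at the cost of replacing the generators $f_n\in\mathscr{F}$ by blocks $g_n\in S$.

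First I would invoke Theorem~\ref{thmA}: since $x\in\Lambda$, there are sequences $(f_n)$ in $\mathscr{F}$ and $(a_n)$ in $\mathscr{A}$ with $F_n(a_n)\to x$, where $F_n=f_1\circ f_2\circ\dots\circ f_n$. Because $\mathscr{A}$ is finite, some letter $a$ occurs among the $a_n$ infinitely often; I fix such an $a$ together with an increasing sequence of indices $m_1<m_2<\dotsb$ with $a_{m_j}=a$ for every $j$. Setting $m_0=0$, I then define
\[
g_j=f_{m_{j-1}+1}\circ f_{m_{j-1}+2}\circ\dots\circ f_{m_j},
\]
which is a nonempty composition of generators and hence an element of $S$.

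The key observation is that the blocks telescope: $g_1\circ g_2\circ\dots\circ g_j=f_1\circ f_2\circ\dots\circ f_{m_j}=F_{m_j}$. Evaluating at the fixed letter $a$ and using $a=a_{m_j}$, I obtain $g_1\circ\dots\circ g_j(a)=F_{m_j}(a_{m_j})$, which is a subsequence of the convergent sequence $F_n(a_n)\to x$ and therefore tends to $x$ as $j\to\infty$. Thus the letter $a$ and the sequence $(g_j)$ in $S$ have exactly the required property.

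I do not anticipate a serious obstacle; the proof is short once the s-adic representation is in hand. The only points requiring care are the pigeonhole choice of the recurring letter $a$ (forcing $a_{m_j}=a$, so that the letters match on the nose and no appeal to Lemma~\ref{lemK} is needed) and the verification that the left-to-right composition of the blocks reconstitutes $F_{m_j}$ precisely. The fixed-letter-free hypothesis enters only indirectly, through its role in Theorem~\ref{thmA}.
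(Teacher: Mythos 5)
Your proposal is correct and follows essentially the same argument as the paper: invoke Theorem~\ref{thmA}, use the pigeonhole principle to extract a subsequence of indices where the letter $a_n$ is constant, and group the generators between consecutive such indices into blocks $g_j\in S$ so that the composition telescopes to $F_{m_j}$. The paper's proof is identical in structure, down to the choice of $m_0=0$ and the observation that no appeal to Lemma~\ref{lemK} is needed.
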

\begin{proof}
By Theorem~\ref{thmA}, $x$ is an s-adic limit  of $\mathscr{F}$. Hence there exist sequences $(f_n)$ in $\mathscr{F}$ and $(a_n)$ in $\mathscr{A}$ for which 
\(F_n(a_n)\to x\), where $F_n=f_1\circ f_2\circ \dots \circ f_n$. Since $\mathscr{A}$ is finite we can find a letter $a$ in $\mathscr{A}$ and an increasing sequence of positive integers $(n_k)$ with $a_{n_k}=a$, for $k=1,2,\dotsc$. Let $n_0=0$ and $g_k=f_{n_{k-1}+1}\circ f_{n_{k-1}+2}\circ \dots \circ f_{n_k}$, for $k=1,2,\dotsc$, and define $G_k=g_1\circ g_2\circ \dotsb \circ g_k$. Then $G_k=F_{n_k}$. Hence
\[
G_k(a) = F_{n_k}(a)=F_{n_k}(a_{n_k})\to x
\]
as $k\to\infty$, as required.
\end{proof}

\section{Invariant sets}\label{secF}

In this section we prove Theorem~\ref{thmB} and related results. A first step towards these goals is the following lemma. Recall that a subset $X$ of $\mathscr{A}^\mathbb{N}$ is strongly $S$-invariant if $S(X)=X$.

\begin{lemma}\label{lemJ}
Let $S$ be a fixed-letter-free substitution semigroup with finite generating set $\mathscr{F}$. The forward limit set $\Lambda(a)$ of a letter $a\in\mathscr{A}$ is strongly $S$-invariant.
\end{lemma}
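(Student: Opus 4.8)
The plan is to establish the two inclusions $S(\Lambda(a))\subseteq\Lambda(a)$ and $\Lambda(a)\subseteq S(\Lambda(a))$ separately. The first is already in hand: Lemma~\ref{lemS} says that $\Lambda(a)$ is $S$-invariant, which is exactly the assertion $S(\Lambda(a))\subseteq\Lambda(a)$. Hence the whole content of the lemma is the reverse inclusion, namely that every point of $\Lambda(a)$ is the image under some element of $S$ of another point of $\Lambda(a)$. My strategy is to take a point $x\in\Lambda(a)$, approximate it by images $F_n(a)$ with $F_n\in S$, and then \emph{peel off} the first generator from each $F_n$ so as to exhibit $x$ as $f(y)$ for a single generator $f$ and a point $y$ that I will show lies in $\Lambda(a)$.

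In detail, I would fix $x\in\Lambda(a)=\overline{S(a)}\setminus S(a)$ and choose a sequence $(F_n)$ in $S$ with $F_n(a)\to x$. Since $x$ is an infinite word, $|F_n(a)|\to\infty$. Writing $F_n=F(u_n)$ for words $u_n$ over $\mathscr{F}$ (using the map $F$ of Section~\ref{secC}), and using that each generator expands length by at most a fixed factor, I get $|u_n|\to\infty$, so for all large $n$ I may factor $u_n=f_nv_n$ with $f_n\in\mathscr{F}$ and $v_n\in\mathscr{F}^+$ nonempty. As $\mathscr{F}$ is finite, passing to a subsequence makes $f_n$ equal to a single generator $f$, and setting $G_n=F(v_n)\in S$ gives $F_n=f\circ G_n$.

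The key step, and the one I expect to be the main obstacle, is to verify that the limit of a subsequence of $(G_n(a))$ lands inside $\mathscr{A}^\mathbb{N}$ rather than among the finite words, for only then will it belong to $\Lambda(a)$. Here I would use that $|F_n(a)|=|f(G_n(a))|\leqslant M\,|G_n(a)|$, where $M$ is the maximum length of $f(b)$ over letters $b$; since $|F_n(a)|\to\infty$, this forces $|G_n(a)|\to\infty$. By compactness of $\widetilde{\mathscr{A}}$ I then pass to a subsequence along which $G_n(a)\to y$, and the length growth guarantees $y\in\mathscr{A}^\mathbb{N}$. Because $G_n(a)\in S(a)$ while $y\notin S(a)$, I conclude $y\in\overline{S(a)}\setminus S(a)=\Lambda(a)$.

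Finally, I would invoke continuity of the action: since $G_n(a)\to y$ and $f$ acts continuously on $\widetilde{\mathscr{A}}$ (via the inequality $d(f(u),f(v))\leqslant d(u,v)$ for words $u,v$ sharing a first letter), we have $f(G_n(a))\to f(y)$; and since $f(G_n(a))=F_n(a)\to x$, this yields $x=f(y)\in S(\Lambda(a))$. Combining the two inclusions gives $S(\Lambda(a))=\Lambda(a)$, as required. It is worth noting that in this single-generator peeling the growth $|G_n(a)|\to\infty$ comes for free from the bounded expansion of one substitution; the fixed-letter-free hypothesis, through Theorem~\ref{thmG}, is the natural tool to fall back on were one instead to peel off a growing prefix.
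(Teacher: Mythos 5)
Your proof is correct and follows essentially the same route as the paper's: peel the leading generator off each approximating substitution, pass to a subsequence on which that generator is a fixed $f\in\mathscr{F}$, extract by compactness a limit $y$ of the tails $G_n(a)$, check that $y$ is an infinite word and hence lies in $\Lambda(a)$, and conclude $x=f(y)$ by continuity. The one genuine difference is in how the divergence $|G_n(a)|\to\infty$ is justified: the paper gets it from Theorem~\ref{thmG} (which is where the fixed-letter-free hypothesis enters), whereas you derive it from the elementary bound $|F_n(a)|=|f(G_n(a))|\leqslant M\,|G_n(a)|$ with $M$ the maximal length of $f(b)$. Your version is slightly cleaner and, as you observe, shows that the reverse inclusion $\Lambda(a)\subseteq S(\Lambda(a))$ does not actually require the fixed-letter-free hypothesis when one peels off only a single generator; the hypothesis would be needed only if one peeled off an unboundedly long prefix.
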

\begin{proof}
We already know that $\Lambda(a)$ is $S$-invariant, by Lemma~\ref{lemS}, so it remains to show that if $y\in \Lambda(a)$, then there exists $g\in S$ and $x\in\Lambda(a)$ with $g(x)=y$. To prove this, observe that since $y\in\Lambda(a)$ there is a sequence $(w_n)$ in $\mathscr{F}^+$ with $F(w_n)(a)\to y$. Note that $|w_n|\to\infty$. By restricting to a subsequence of $(w_n)$, we can assume that in fact $w_n=gv_n$, for $n=1,2,\dotsc$, where $g\in\mathscr{F}$ and $v_n\in\mathscr{F}^+$. Now, $(F(v_n)(a))$ is a sequence in the compact metric space $(\widetilde{\mathscr{A}},d)$, so there is an increasing sequence of positive integers $(n_k)$ for which $F(v_{n_k})(a)\to x$, for some point $x\in \widetilde{\mathscr{A}}$. In fact, $x\in\mathscr{A}^{\mathbb{N}}$, because $|F(v_{n_k})(a)|\to\infty$ by Theorem~\ref{thmG}. Hence, by continuity of $g$, 
\[
F(w_{n_k})(a)=g\circ F(v_{n_k})(a)\to g(x),
\]
so $g(x)=y$, as required.
\end{proof}

It follows immediately from Lemma~\ref{lemJ} that the forward limit set of any subset of $\mathscr{A}$ for $S$ is strongly $S$-invariant.

\begin{lemma}\label{lemL}
Let $S$ be a fixed-letter-free substitution semigroup with finite generating set $\mathscr{F}$, and let $X$ be a subset of $\mathscr{A}^{\mathbb{N}}$. Suppose that for some $x\in X$ there are sequences $(f_n)$ in $S$ and $(x_n)$ in $X$ with $x_{n-1}=f_n(x_n)$, for $n=1,2,\dotsc$, where $x_0=x$. Then $x\in\Lambda(A)$, where $A=\pi_1(X)$.
\end{lemma}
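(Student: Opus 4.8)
The plan is to collapse the recursion into a single identity and then recognise $x$ as a forward limit of an individual letter. Unravelling the hypothesis $x_{n-1}=f_n(x_n)$ from $x_0=x$ downwards gives
\[
x=f_1\circ f_2\circ \dotsb \circ f_n(x_n),
\]
for every $n$; writing $G_n=f_1\circ f_2\circ \dotsb \circ f_n\in S$, this says simply that $G_n(x_n)=x$ for all $n$. Setting $a_n=\pi_1(x_n)$, each $a_n$ lies in $A=\pi_1(X)$ because $x_n\in X$. The goal is to show that $G_n(a_n)\to x$ and hence that $x$ lies in the closure of $S(a)$ for some letter $a\in A$.

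The key step, which I expect to be the main obstacle, is establishing that $|G_n(a_n)|\to\infty$ as $n\to\infty$. Here one must take care that the $f_n$ lie in $S$ rather than in $\mathscr{F}$. Since each $f_i$ is a nonempty composition of generators from $\mathscr{F}$, the substitution $G_n$ is a composition of at least $n$ generators. Theorem~\ref{thmG} tells us, for each fixed letter, that the minimum length of the image of that letter under a composition of $m$ generators tends to infinity as $m\to\infty$; as $\mathscr{A}$ is finite, the minimum over all letters tends to infinity as well. Comparing $|G_n(a_n)|$ against this minimum, and using that $G_n$ decomposes into at least $n$ generators, yields $|G_n(a_n)|\to\infty$.

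With this in hand, I would apply Lemma~\ref{lemK} with $x_n$ as given, with $y_n=a_n$ (regarded as a one-letter word, so that $\pi_1(x_n)=\pi_1(y_n)$), and with $F_n=G_n$. The hypothesis $F_n(x_n)\to x$ holds trivially because $G_n(x_n)=x$ for every $n$, and the growth condition $|G_n(a_n)|\to\infty$ has just been verified. The lemma then delivers $G_n(a_n)\to x$.

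Finally, since $\mathscr{A}$ is finite I would pass to a subsequence along which $a_n$ equals a fixed letter $a\in A$. Along this subsequence $G_n(a)\to x$ with each $G_n(a)\in S(a)$, so $x\in\overline{S(a)}$. As $x\in X\subseteq\mathscr{A}^\mathbb{N}$ is an infinite word while $S(a)\subseteq\mathscr{A}^+$, we have $x\notin S(a)$, whence $x\in\overline{S(a)}\setminus S(a)=\Lambda(a)\subseteq\Lambda(A)$, as required.
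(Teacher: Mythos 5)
Your proof is correct and follows essentially the same route as the paper's: compose the recursion into $F_n(x_n)=x$, invoke Theorem~\ref{thmG} to get $|F_n(a_n)|\to\infty$, and apply Lemma~\ref{lemK} with $y_n=a_n$ to conclude $F_n(a_n)\to x$. You merely supply two details the paper leaves implicit, namely why Theorem~\ref{thmG} still gives the growth estimate when the $f_n$ lie in $S$ rather than $\mathscr{F}$, and the final passage from $F_n(a_n)\to x$ to membership in $\Lambda(A)$; both are handled correctly.
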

\begin{proof}
Choose sequences $(f_n)$ and $(x_n)$ as specified in the lemma.  Let $F_n=f_1\circ f_2\circ \dots\circ f_n$. Then we have $F_n(x_n)=x$. Define $a_n=\pi_1(x_n)$, and note that $a_n\in \pi_1(X)$. Since $|F_n(a_n)|\to \infty$, by Theorem~\ref{thmG}, we can apply Lemma~\ref{lemK} with $y_n=a_n$ to see that $F_n(a_n)\to x$. Hence $x\in \Lambda(A)$, as required.
\end{proof}

Using Lemma~\ref{lemL} we obtain another characterisation of $\Lambda$.

\begin{theorem}\label{thmI}
Let $S$ be a fixed-letter-free substitution semigroup with finite generating set $\mathscr{F}$, and let $x\in\mathscr{A}^{\mathbb{N}}$. Then $x\in\Lambda$ if and only if there are sequences $(f_n)$ in $S$ and $(x_n)$ in $\mathscr{A}^{\mathbb{N}}$ with $x_{n-1}=f_n(x_n)$, for $n=1,2,\dotsc$, where $x_0=x$.
\end{theorem}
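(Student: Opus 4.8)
The plan is to prove the two implications separately, each resting on a single ingredient already established. The converse direction (that the existence of such sequences forces $x\in\Lambda$) is essentially immediate from Lemma~\ref{lemL}. Given sequences $(f_n)$ in $S$ and $(x_n)$ in $\mathscr{A}^{\mathbb{N}}$ with $x_{n-1}=f_n(x_n)$ and $x_0=x$, I would apply Lemma~\ref{lemL} with $X=\mathscr{A}^{\mathbb{N}}$. Since every letter is the first letter of some infinite word, $A=\pi_1(X)=\mathscr{A}$, and Lemma~\ref{lemL} then yields $x\in\Lambda(\mathscr{A})=\Lambda$. (Taking $X$ to be merely the countable set $\{x_0,x_1,x_2,\dotsc\}$ would work equally well, giving $x\in\Lambda(A)\subseteq\Lambda$ for $A=\pi_1(X)$.)

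For the forward direction I would exploit the strong $S$-invariance of $\Lambda$. By Lemma~\ref{lemJ} each $\Lambda(a)$ is strongly $S$-invariant, and hence so is $\Lambda=\Lambda(\mathscr{A})=\bigcup_{a\in\mathscr{A}}\Lambda(a)$; in particular $\Lambda\subseteq S(\Lambda)$. Starting from $x_0=x\in\Lambda$, this containment provides $f_1\in S$ and $x_1\in\Lambda$ with $x_0=f_1(x_1)$. Applying the same containment to $x_1$, and iterating, I would build sequences $(f_n)$ in $S$ and $(x_n)$ in $\Lambda$ satisfying $x_{n-1}=f_n(x_n)$, invoking the axiom of dependent choices (as in Lemma~\ref{lemA}) to make the infinite selection legitimate. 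Since $\Lambda\subseteq\mathscr{A}^{\mathbb{N}}$, the sequence $(x_n)$ automatically lies in $\mathscr{A}^{\mathbb{N}}$, as required.

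The proof carries essentially no obstacle beyond correctly identifying the two ingredients: Lemma~\ref{lemL} supplies the converse, while the surjectivity half of strong $S$-invariance (namely $\Lambda\subseteq S(\Lambda)$) drives the backward chain construction for the forward direction. The one point deserving care is that strong invariance must be used in its full strength---not merely $S(\Lambda)\subseteq\Lambda$, which is ordinary $S$-invariance, but the onto statement $\Lambda\subseteq S(\Lambda)$---and this is precisely what Lemma~\ref{lemJ} was established to provide.
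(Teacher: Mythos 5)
Your proof is correct and follows essentially the same route as the paper: the converse direction via Lemma~\ref{lemL} with $X=\mathscr{A}^{\mathbb{N}}$, and the forward direction by iterating the surjectivity half of strong $S$-invariance (Lemma~\ref{lemJ}) with dependent choices. Your explicit remark that the $x_n$ can be kept inside $\Lambda$ so that the iteration is legitimate is a point the paper leaves implicit.
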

\begin{proof}
Suppose first that $x\in\Lambda$. Since $\Lambda$ is strongly $S$-invariant, we can find a substitution $f_1$ in $S$ and an infinite word $x_1$ in $\mathscr{A}^{\mathbb{N}}$ with $f_1(x_1)=x$. By repeating this argument (and applying the axiom of dependent choices) we obtain the required sequences $(f_n)$ and $(x_n)$.

Conversely, suppose there are sequences $(f_n)$ in $S$ and $(x_n)$ in $\mathscr{A}^{\mathbb{N}}$ with $x_{n-1}=f_n(x_n)$, for $n=1,2,\dotsc$, where $x_0=x$. Applying Lemma~\ref{lemL} with $X=\mathscr{A}^\mathbb{N}$, we see that $x\in \Lambda(A)$, where $A=\pi_1(X)=\mathscr{A}$. Consequently, $x\in\Lambda$, as required.
\end{proof}

The next lemma features in the proof of Theorem~\ref{thmB}.

\begin{lemma}\label{lemM}
Let $S$ be a substitution semigroup with finite generating set $\mathscr{F}$, and let $X$ be a closed and $S$-invariant subset of $\mathscr{A}^{\mathbb{N}}$. Then $ \Lambda(A)\subseteq X$, where $A=\pi_1(X)$.
\end{lemma}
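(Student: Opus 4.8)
The plan is to take an arbitrary point of $\Lambda(A)$ and realise it as a limit of points of $X$, then invoke closedness of $X$ to conclude. First I would unpack the definition: if $x\in\Lambda(A)=\overline{S(A)}\setminus S(A)$, then $x$ is the limit of a sequence $F_n(a_n)$ with $F_n\in S$ and $a_n\in A$. Since $A\subseteq\mathscr{A}$ is finite, I can pass to a subsequence along which all the $a_n$ equal a single letter $a\in A$ (exactly the manoeuvre used in the proof of Lemma~\ref{lemS}), so that $F_n(a)\to x$.

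The key observation is that $|F_n(a)|\to\infty$, and this comes for free, with no appeal to the fixed-letter-free hypothesis. Indeed, each $F_n(a)$ is a finite word while the limit $x$ lies in $\mathscr{A}^\mathbb{N}$; in the word metric a finite word of length $L$ disagrees with any infinite word no later than position $L+1$, so that $d(F_n(a),x)\geqslant 2^{-L}$ where $L=|F_n(a)|$. Convergence therefore forces the lengths to infinity. This is the one point that warrants care, since the instinctive move would be to invoke Theorem~\ref{thmG}, which is unavailable here: the present lemma assumes only that $X$ is $S$-invariant, with $S$ \emph{not} required to be fixed-letter-free, so the length growth must be extracted from the geometry of the limit rather than from the semigroup.

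With the length blow-up in hand, the remainder is a transfer step followed by a closure argument. Because $a\in A=\pi_1(X)$, I can choose $y\in X$ with $\pi_1(y)=a$. Applying Lemma~\ref{lemK} to the constant sequences $x_n=a$ and $y_n=y$ (so that $a_n=\pi_1(x_n)=a$, $F_n(x_n)=F_n(a)\to x$, and $|F_n(a_n)|\to\infty$) yields $F_n(y)\to x$. Now $F_n(y)\in X$ by $S$-invariance of $X$, and since $X$ is closed the limit $x$ belongs to $X$. As $x\in\Lambda(A)$ was arbitrary, this gives $\Lambda(A)\subseteq X$, as required. I expect no serious obstacle beyond correctly justifying the length growth without the fixed-letter-free assumption; once that is secured, Lemma~\ref{lemK} and the closedness of $X$ do all the work.
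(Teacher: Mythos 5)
Your proof is correct and follows essentially the same route as the paper's: reduce to a single letter $a$, pick $y\in X$ with $\pi_1(y)=a$, transfer the convergence via Lemma~\ref{lemK}, and finish by $S$-invariance and closedness. Your explicit justification that $|F_n(a)|\to\infty$ follows from the geometry of convergence to an infinite word (rather than from Theorem~\ref{thmG}, which is indeed unavailable without the fixed-letter-free hypothesis) is a correct and welcome filling-in of a detail the paper's proof leaves implicit when it invokes Lemma~\ref{lemK}.
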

\begin{proof}
Let $y\in \Lambda(A)$. There exists $a\in A$ and a sequence $(F_n)$ in $S$ with $F_n(a)\to y$. Choose $x\in X$ with $\pi_1(x)=a$. Then $F_n(x)\to y$ by Lemma~\ref{lemK}. Observe that $F_n(x)\in X$, by $S$-invariance of $X$, and then $y\in X$, by closure of $X$. Hence $\Lambda(A)\subseteq X$, as required.
\end{proof}

We can now prove Theorem~\ref{thmB}. In fact, we prove the following more precise version of that theorem.

\begin{theorem}\label{thmJ}
Let $S$ be a fixed-letter-free substitution semigroup with finite generating set $\mathscr{F}$, and let $X\subseteq\mathscr{A}^{\mathbb{N}}$. Then $X$ is closed and strongly $S$-invariant if and only if $X=\Lambda(A)$, where $A=\pi_1(X)$.
\end{theorem}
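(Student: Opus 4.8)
The plan is to treat the biconditional as two separate implications, with the substantive content lying in the passage from closed-and-strongly-invariant to forward limit set.

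For the easy direction, suppose $X=\Lambda(A)$ with $A=\pi_1(X)$. Then $X$ is a forward limit set of a subset of $\mathscr{A}$, so it is closed (as observed at the start of Section~\ref{secD}) and, because $S$ is fixed-letter-free, strongly $S$-invariant by Lemma~\ref{lemJ} together with the remark immediately following it. No further argument is needed here, since the hypothesis $A=\pi_1(X)$ plays no role in securing these two properties.

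For the converse, suppose $X$ is closed and strongly $S$-invariant and put $A=\pi_1(X)$; I would establish $X=\Lambda(A)$ by a double inclusion. The inclusion $\Lambda(A)\subseteq X$ follows at once from Lemma~\ref{lemM}, since strong $S$-invariance entails ordinary $S$-invariance and $X$ is closed. For the reverse inclusion $X\subseteq\Lambda(A)$, I would take $x\in X$ and use strong $S$-invariance, which asserts $S(X)=X$, to find $f_1\in S$ and $x_1\in X$ with $f_1(x_1)=x$. Iterating this and appealing to the axiom of dependent choices yields sequences $(f_n)$ in $S$ and $(x_n)$ in $X$ satisfying $x_{n-1}=f_n(x_n)$ with $x_0=x$. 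Lemma~\ref{lemL} then applies verbatim to give $x\in\Lambda(A)$, completing the equality.

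I expect the inclusion $X\subseteq\Lambda(A)$ to be the main obstacle, and it is precisely where strong $S$-invariance—as opposed to mere $S$-invariance—is indispensable: the backward-orbit construction only gets off the ground because every point of $X$ admits a preimage in $X$ under some element of $S$. Once the infinite preimage chain is in hand, the remaining work, namely certifying that the associated forward orbit of first letters actually converges to $x$, is already packaged inside Lemma~\ref{lemL}, which in turn leans on fixed-letter-freeness through Theorem~\ref{thmG} to guarantee the length growth $|F_n(a_n)|\to\infty$. The overall structure therefore closely mirrors that of Theorem~\ref{thmI}, with the abstract invariant set $X$ playing the role previously occupied by $\Lambda$ itself.
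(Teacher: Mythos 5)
Your proposal is correct and follows essentially the same route as the paper's own proof: the forward direction via Lemma~\ref{lemJ}, and the converse by combining Lemma~\ref{lemM} for $\Lambda(A)\subseteq X$ with the backward-orbit construction from strong $S$-invariance feeding into Lemma~\ref{lemL} for $X\subseteq\Lambda(A)$. Nothing is missing.
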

\begin{proof}
Suppose that $X=\Lambda(A)$, where $A=\pi_1(X)$. Then $X$ is closed, and it is strongly $S$-invariant, by Lemma~\ref{lemJ}.

Conversely, suppose that $X$ is closed and strongly $S$-invariant. Let $x\in X$. By the strongly $S$-invariant property, there are sequences $(f_n)$ in $S$ and $(x_n)$ in $X$ with $f_n(x_n)=x_{n-1}$, for $n=1,2,\dotsc$, where $x_0=x$. Applying Lemma~\ref{lemL}, we find that $x\in\Lambda(A)$, where $A=\pi_1(X)$. Thus $X\subseteq \Lambda(A)$. Next, $\Lambda(A)\subseteq X$, by Lemma~\ref{lemM}. Hence $X=\Lambda(A)$, as required.
\end{proof}

We finish this section with a result (Theorem~\ref{thmC}) characterising closed and $S$-invariant sets (rather than closed and strongly $S$-invariant sets) in terms of forward limit sets. A \emph{minimal} closed and $S$-invariant subset of $\mathscr{A}^\mathbb{N}$ is a closed and $S$-invariant subset of $\mathscr{A}^\mathbb{N}$ that contains no other such sets besides itself (and the empty set). The next lemma facilitates proving Theorem~\ref{thmC}.

\begin{lemma}\label{lemR}
Let $S$ be a substitution semigroup of the alphabet $\mathscr{A}$, and let $x\in\Lambda(A)$, for some subset $A$ of $\mathscr{A}$. Then there exists $a\in A$ and $g\in S$ with $g[a]=\pi_1(x)$. 
\end{lemma}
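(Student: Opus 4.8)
The plan is to unwind the definition $\Lambda(A)=\overline{S(A)}\setminus S(A)$ and exploit the fact that convergence in the word metric $d$ forces agreement of leading letters. Since $x\in\Lambda(A)\subseteq\overline{S(A)}$, there is a sequence of points of $S(A)$ converging to $x$; each such point has the form $F_n(a_n)$ with $F_n\in S$ and $a_n\in A$, so $F_n(a_n)\to x$.

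Because $A$ is finite (being a subset of the finite alphabet $\mathscr{A}$), I would pass to a subsequence along which the letters $a_n$ are all equal to a single letter $a\in A$; after relabelling, $F_n(a)\to x$. The key observation is metric: if $d(u,v)<1$ then $\pi_1(u)=\pi_1(v)$, since otherwise the first disagreement would occur at index $1$, giving $d(u,v)=1$. Applying this to the convergent sequence, there is some $N$ with $d(F_N(a),x)<1$, whence $\pi_1(F_N(a))=\pi_1(x)$.

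Setting $g=F_N\in S$ then yields $g[a]=\pi_1(g(a))=\pi_1(F_N(a))=\pi_1(x)$, as required, where $g[a]$ denotes the first letter of $g(a)$ in the sense of Section~\ref{secB}. There is no serious obstacle here: the only points to handle with care are the finiteness of $A$ (needed to stabilise the letter $a_n$ along a subsequence) and the elementary translation of convergence into equality of first letters, both of which are immediate from the metric introduced in Section~\ref{secB}.
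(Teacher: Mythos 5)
Your proposal is correct and follows essentially the same route as the paper: extract a sequence $F_n(a_n)\to x$ from $x\in\overline{S(A)}$, stabilise the letter to a single $a\in A$ by finiteness of $\mathscr{A}$, and use the fact that convergence in the word metric forces eventual agreement of first letters to pick $g=F_N$. The paper's proof is just a terser version of the same argument (it absorbs the subsequence step into the phrase ``there exists $a\in A$ and a sequence $(F_n)$ with $F_n(a)\to x$''), so there is nothing further to add.
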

\begin{proof}
Since $x\in\Lambda(A)$, there exists $a\in A$ and a sequence $(F_n)$ in $S$ with $F_n(a)\to x$. Thus there exists a positive integer $m$ with $\pi_1(F_n(a))=\pi_1(x)$, for $n\geqslant m$. Hence we can choose $g=F_m$, to give 
$g[a]=F_m[a]=\pi_1(F_m(a))=\pi_1(x)$, as required.
\end{proof}

Here is the promised result on closed and $S$-invariant sets.

\begin{theorem}\label{thmC}
Let $S$ be a fixed-letter-free substitution semigroup with finite generating set $\mathscr{F}$. A subset of $\mathscr{A}^{\mathbb{N}}$ is a minimal closed and $S$-invariant subset if and only if it is the forward limit set of a terminal component of $G_{\mathscr{F}}$. 
\end{theorem}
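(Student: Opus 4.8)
The plan is to prove both implications by identifying, for a minimal set $X$, the terminal component $A$ sitting at the end of a directed walk from a first letter of $X$, and conversely by exploiting the terminal property to pin down the first letters of any invariant subset of $\Lambda(A)$. Throughout I would use three structural facts that are either established earlier or are routine graph theory: first, that $\Lambda(A)=\bigcup_{a\in A}\Lambda(a)$ and that letters in a common strongly connected component share a forward limit set (Corollary~\ref{corC}); second, that each $\Lambda(a)$ is non-empty, which follows from the fixed-letter-free hypothesis via Theorem~\ref{thmG} together with compactness of $(\widetilde{\mathscr{A}},d)$; and third, that in the finite graph $G_{\mathscr{F}}$ every vertex admits a directed walk into some terminal component, since every vertex has out-degree at least one and the condensation of a finite digraph always has a sink.

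For the reverse implication, let $A$ be a terminal component. Then $\Lambda(A)$ is closed and $S$-invariant by Lemma~\ref{lemS}, and non-empty by the remark above. To see that it is minimal, I would suppose $Y$ is a non-empty closed and $S$-invariant subset with $Y\subseteq\Lambda(A)$, and take any $x\in Y$. Since $x\in\Lambda(A)$, Lemma~\ref{lemR} supplies $a\in A$ and $g\in S$ with $g[a]=\pi_1(x)$. The terminal property forces $\pi_1(x)=g[a]\in A$, so $\pi_1(Y)\subseteq A$. Because $A$ is strongly connected, Corollary~\ref{corC} gives $\Lambda(b)=\Lambda(A)$ for every $b\in\pi_1(Y)$, whence $\Lambda(\pi_1(Y))=\Lambda(A)$. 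Finally Lemma~\ref{lemM} yields $\Lambda(A)=\Lambda(\pi_1(Y))\subseteq Y\subseteq\Lambda(A)$, so $Y=\Lambda(A)$ and $\Lambda(A)$ is minimal.

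For the forward implication, let $X$ be a minimal closed and $S$-invariant set and put $B=\pi_1(X)$. Lemma~\ref{lemM} gives $\Lambda(B)\subseteq X$, and $\Lambda(B)$ is itself non-empty, closed and $S$-invariant, so minimality forces $X=\Lambda(B)$. Now I would choose any $b\in B$ and a directed walk from $b$ into a terminal component $A$, reaching some $a\in A$. Lemma~\ref{lemE} applied to the walk from $b$ to $a$ gives $\Lambda(a)\subseteq\Lambda(b)\subseteq\Lambda(B)=X$, while $\Lambda(a)=\Lambda(A)$ by Corollary~\ref{corC}. Since $\Lambda(A)$ is a non-empty closed and $S$-invariant subset of the minimal set $X$, I conclude $X=\Lambda(A)$, exhibiting $X$ as the forward limit set of a terminal component.

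The main obstacle, such as it is, lies not in any single estimate but in getting the directions of the containments in Lemma~\ref{lemE} to line up with the direction of reachability in $G_{\mathscr{F}}$, and in cleanly justifying the two ``soft'' inputs: the existence of a terminal component reachable from every vertex, and the non-emptiness of each $\Lambda(a)$. The former is the standard fact that the condensation of a finite digraph has a sink, but it deserves an explicit sentence because the definition of terminal component is phrased in terms of walks rather than sinks; the latter must be invoked to rule out the degenerate possibility that the empty set masquerades as a minimal invariant set.
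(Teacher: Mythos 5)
Your proof is correct, and while the backward implication (terminal component $\Rightarrow$ minimal) tracks the paper's own argument almost step for step---Lemma~\ref{lemR} to place the first letters of a non-empty invariant $Y\subseteq\Lambda(A)$ inside $A$, then Corollary~\ref{corC} and Lemma~\ref{lemM} to force $Y=\Lambda(A)$---your forward implication takes a genuinely different route. The paper proves that a first letter $a$ of the minimal set $X$ \emph{itself} lies in a terminal component: it applies Lemma~\ref{lemR} to obtain $f[a]=a$ (so $a\in\mathscr{A}_0$), and then, for an arbitrary $b=g[a]$, uses minimality to get $X=\Lambda(a)=\Lambda(b)$ and Lemma~\ref{lemR} a second time to produce a walk from $b$ back to $a$, so the component of $a$ is terminal. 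You instead walk from a first letter $b$ of $X$ into \emph{some} terminal component $A$ supplied by the condensation-has-a-sink argument, pull $\Lambda(A)\subseteq\Lambda(b)\subseteq X$ back via Lemma~\ref{lemE}, and invoke minimality once. The trade is that you replace two applications of Lemma~\ref{lemR} by two soft facts the paper leaves implicit: that every vertex of $G_{\mathscr{F}}$ reaches a terminal component (which does need the extra sentence you flag, since one must first reach $\mathscr{A}_0$ using positive out-degree before the sink argument applies), and that each $\Lambda(a)$ is non-empty. You are right to insist on the latter; it is in fact also tacitly used in the paper's own step ``we must have $X=\Lambda(a)$, by minimality of $X$'', because the stated definition of minimality only excludes proper non-empty invariant subsets. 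The paper's version buys the slightly sharper conclusion that the terminal component can be taken to contain a first letter of $X$; yours buys a cleaner separation of the finite graph theory from the dynamics and one fewer use of Lemma~\ref{lemR}.
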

\begin{proof}
Let $T$ be a terminal component of $G_{\mathscr{F}}$. Then $\Lambda(T)$ is closed and $S$-invariant. Let $X$ be a closed and $S$-invariant subset of $\mathscr{A}^{\mathbb{N}}$ with $X\subseteq \Lambda(T)$. Observe that $\Lambda(A)\subseteq X$, where $A=\pi_1(X)$, by Lemma~\ref{lemM}. We will prove that $\Lambda(A)=\Lambda(T)$, from which it follows that $\Lambda(T)$ is minimal.

To do this, choose $a\in A$. Then $a=\pi_1(x)$, for some $x\in X$, so $x\in \Lambda(T)$, because $X\subseteq \Lambda(T)$. By Lemma~\ref{lemR}, there exists $b\in T$ and $g\in S$ with $g[b]=a$. But $T$ is a terminal component, so there exists $h\in S$ with $h[a]=b$. Hence $\Lambda(b)\subseteq \Lambda(a)$, by Lemma~\ref{lemE}. Observe that $\Lambda(b)=\Lambda(T)$, by Corollary~\ref{corC}, and of course $\Lambda(a)\subseteq \Lambda(A)$, so we see that $\Lambda(T)\subseteq \Lambda(A)$. Hence $\Lambda(A)=\Lambda(T)$, as required.

Conversely, let $X$ be a minimal closed and $S$-invariant subset of $\mathscr{A}^\mathbb{N}$. Then $\Lambda(A)\subseteq X$, where $A=\pi_1(X)$, by Lemma~\ref{lemM}. Choose $a\in A$. Then $\Lambda(a)\subseteq \Lambda(A)$. Since $\Lambda(a)$ is closed and $S$-invariant, we must have $X=\Lambda(a)$, by minimality of $X$.

Now choose $x\in X$ with $\pi_1(x)=a$. There exists $f\in S$ with $f[a]=a$, by Lemma~\ref{lemR}. Hence the vertex $a$ belongs to some strongly connected component $B$ of $G_{\mathscr{F}}$.

Let $g\in S$ and define $b=g[a]$. To show that $B$ is a terminal component of $G_{\mathscr{F}}$ we must prove that $b\in B$. Observe that $\Lambda(b)\subseteq \Lambda(a)$, by Lemma~\ref{lemE}, so $X=\Lambda(a)=\Lambda(b)$ by minimality of $X$. Then, by Lemma~\ref{lemR}, there exists $h\in S$ with $h[b]=a$. Therefore $b\in B$, as required.
\end{proof}

\section{Fixed points of substitutions semigroups}\label{secG}

In this section we prove Theorem~\ref{thmD}. This theorem says that the forward limit set of a finitely-generated fixed-letter-free substitution semigroup $S$ is equal to $\overline{S(X)}$, where $X$ is the set of fixed points of $S$. Three preliminary results are required before we prove the theorem itself. The first one is well known and straightforward, so we omit the proof.

\begin{lemma}\label{lemI}
Let $f$ be a substitution that has no fixed letters and that satisfies $f[a]=a$, for some letter $a$. Then $f$ has a unique fixed point $x$ in $\mathscr{A}^{\mathbb{N}}$ with first letter $a$. Furthermore, $f^n(a)\to x$ as $n\to\infty$.
\end{lemma}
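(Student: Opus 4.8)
The plan is to construct the fixed point $x$ as the limit of the iterates $f^n(a)$ and then to verify that it is the unique infinite fixed point with first letter $a$. The starting observation is that, since $f$ has no fixed letters, $f(a)\neq a$, while $f[a]=a$ means $\pi_1(f(a))=a$; hence $f(a)=aw$ for some non-empty word $w$. From this I would show by induction that each $f^n(a)$ is a prefix of $f^{n+1}(a)$: if $f^{n+1}(a)=f^n(a)\,u$, then applying the substitution gives $f^{n+2}(a)=f(f^n(a))f(u)=f^{n+1}(a)f(u)$, so the prefix relation propagates. In particular every $f^n(a)$ has first letter $a$.

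Next I would confirm that the lengths $|f^n(a)|$ tend to infinity. Writing $f^{n+1}(a)=f^n(a)\,u_n$, the increments satisfy $u_0=w$ and $u_n=f(u_{n-1})$, so $|u_n|\geqslant|u_{n-1}|\geqslant\dotsb\geqslant|w|\geqslant1$ because $f$ is non-erasing; hence $|f^n(a)|\geqslant n+1$. The nested prefixes of unbounded length determine a unique infinite word $x\in\mathscr{A}^{\mathbb{N}}$ with $f^n(a)\to x$ in $(\widetilde{\mathscr{A}},d)$, and $\pi_1(x)=a$. Continuity of the action (the inequality $d(f(u),f(v))\leqslant d(u,v)$ for $u,v$ sharing a first letter, established in Section~\ref{secB}) then yields $f(x)=f(\lim f^n(a))=\lim f^{n+1}(a)=x$, so $x$ is a fixed point with first letter $a$; this simultaneously establishes the final assertion that $f^n(a)\to x$.

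For uniqueness, suppose $y\in\mathscr{A}^{\mathbb{N}}$ satisfies $f(y)=y$ and $\pi_1(y)=a$. I would prove by induction that $f^n(a)$ is a prefix of $y$ for every $n$: the base case is $\pi_1(y)=a$, and if $y=f^n(a)\,z$ then $y=f(y)=f(f^n(a))f(z)=f^{n+1}(a)f(z)$, so $f^{n+1}(a)$ is a prefix of $y$ as well. Since $|f^n(a)|\to\infty$, the word $y$ agrees with $x=\lim f^n(a)$ on arbitrarily long prefixes, forcing $y=x$.

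The argument is essentially routine; the one point needing care is confirming that the iterates genuinely grow without bound, which is where the hypothesis that $a$ is not a fixed letter (a consequence of the fixed-letter-free assumption) enters. Without it the chain of prefixes could stabilise at the finite word $a$, and no infinite fixed point would arise.
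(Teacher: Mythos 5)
Your proof is correct. The paper omits the proof of this lemma entirely, describing it as ``well known and straightforward,'' and your argument is precisely the standard one it has in mind: the nested prefixes $f^n(a)$ of strictly increasing length converge to a limit $x$, continuity gives $f(x)=x$, and the same prefix induction applied to any fixed point $y$ with $\pi_1(y)=a$ forces $y=x$ (your closing observation that only $f(a)\neq a$, rather than the full fixed-letter-free hypothesis, is actually used is also accurate).
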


For the next lemma, recall that $\fix(S)$ denotes the set of fixed points of $S$.

\begin{lemma}\label{lemG}
Let $S$ be a fixed-letter-free substitution semigroup with finite generating set $\mathscr{F}$, and let $a\in\mathscr{A}$. Let $(f_n)$ be a sequence in $S$ and let $F_n=f_1\circ f_2\circ\dots\circ f_n$. Suppose that some subsequence of $(F_n(a))$ converges to an infinite word $x$ in $\mathscr{A}^{\mathbb{N}}$, where $\pi_1(x)=a$. Then $x\in \overline{\fix(S)}$. 
\end{lemma}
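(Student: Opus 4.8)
The plan is to manufacture, for each sufficiently large term of the convergent subsequence, a genuine fixed point of an element of $S$ that sits extremely close to that term, and hence close to $x$; these fixed points then witness $x\in\overline{\fix(S)}$.

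First I would pass to the convergent subsequence and relabel it, so that $G_m(a)\to x$ where each $G_m=f_1\circ\dotsb\circ f_{n_m}$ belongs to $S$. Since $\pi_1(x)=a$ and $G_m(a)\to x$, the words $G_m(a)$ eventually agree with $x$ in their first letter, and so $G_m[a]=a$ for all sufficiently large $m$. Because $S$ is fixed-letter-free, each such $G_m$ has no fixed letters while satisfying $G_m[a]=a$, so Lemma~\ref{lemI} applies: $G_m$ has a unique fixed point $y_m\in\fix(S)$ with first letter $a$, and $G_m^{\,n}(a)\to y_m$ as $n\to\infty$.

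The heart of the argument, and the step I expect to be the main obstacle, is to show that $y_m$ lies close to $G_m(a)$. I would establish the nesting property that $G_m^{\,n+1}(a)$ has $G_m^{\,n}(a)$ as a prefix: writing $G_m^{\,n}(a)=G_m^{\,n-1}(a)\,v$ and applying $G_m$ gives $G_m^{\,n+1}(a)=G_m^{\,n}(a)\,G_m(v)$, with the base case $n=1$ following from $\pi_1(G_m(a))=a$. Passing to the limit, the infinite word $y_m$ therefore has each $G_m^{\,n}(a)$ as a prefix, and in particular has $G_m(a)$ as a prefix. Since $y_m$ and $G_m(a)$ then agree on the first $|G_m(a)|$ letters, the definition of the metric yields $d(y_m,G_m(a))\leqslant 2^{-|G_m(a)|}$.

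Finally, because $x$ is an infinite word and $G_m(a)\to x$, the lengths $|G_m(a)|$ tend to infinity, so $d(y_m,G_m(a))\to 0$. Combining this with $d(G_m(a),x)\to 0$ via the triangle inequality gives $y_m\to x$, and since each $y_m$ lies in $\fix(S)$ we conclude $x\in\overline{\fix(S)}$. Everything outside the prefix estimate of the third paragraph is a routine assembly of Lemma~\ref{lemI}, compactness bookkeeping, and the triangle inequality.
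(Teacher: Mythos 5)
Your proof is correct and follows essentially the same route as the paper: pass to the subsequence with $F_{n_k}[a]=a$, invoke Lemma~\ref{lemI} to obtain the fixed points $y_m$ with first letter $a$, and show $y_m\to x$. The prefix estimate $d(y_m,G_m(a))\leqslant 2^{-|G_m(a)|}$ that you derive by hand is exactly what the paper gets by applying Lemma~\ref{lemK} (via Lemma~\ref{lemD}) to the pair $a$ and $y_m$, so the only difference is that you re-prove that small step rather than cite it.
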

\begin{proof}
Let us choose a sequence of positive integers $(n_k)$ for which $F_{n_k}(a)\to x$. Since $\pi_1(x)=a$ it follows that $\pi_1(F_{n_k}(x))=a$ for sufficiently large values of $k$; in fact, after relabelling the sequence we  can assume that  $\pi_1(F_{n_k}(x))=a$, for $k=1,2,\dotsc$. 

By Lemma~\ref{lemI}, $F_{n_k}$ has a unique fixed point $x_k$ in $\mathscr{A}^{\mathbb{N}}$ with $\pi_1(x_k)=a$. And by Lemma~\ref{lemK}, $F_{n_k}(x_k)\to x$. Since $F_{n_k}(x_k)=x_k$, we deduce that $x\in \overline{\fix(S)}$, as required.
\end{proof}

\begin{lemma}\label{lemF}
Let $(w_n)$ be a sequence in $\mathscr{A}^+$ with $|w_1|<|w_2|<\dotsb$. Then there exists a pair $(r,a)\in\mathbb{N}\times\mathscr{A}$ and two increasing sequences of positive integers $(n_k)$ and $(l_k)$ with $r+l_k\leqslant |w_{n_k}|$ and 
\[
\pi_r(w_{n_k})=\pi_{r+l_k}(w_{n_k})=a,
\]
for $k=1,2,\dotsc$.
\end{lemma}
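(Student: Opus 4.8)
The plan is to reduce the problem to Lemma~\ref{lemA}, which will supply a single infinite ``stabilised prefix'' word, and then to apply the pigeonhole principle to that word. Since the lengths $|w_n|$ are strictly increasing we have $|w_n|\to\infty$, so Lemma~\ref{lemA} (in its forward form) applies: there is a subsequence $(w_{p_n})$ of $(w_n)$ of the form $w_{p_n}=x_1x_2\dotsc x_nt_n$, where $x_1,x_2,\dotsc$ are letters of $\mathscr{A}$ and each $t_n\in\mathscr{A}^+$. In particular $\pi_i(w_{p_n})=x_i$ whenever $1\leqslant i\leqslant n$, and $|w_{p_n}|>n$.

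Next I would apply the pigeonhole principle to the infinite sequence of letters $x_1,x_2,\dotsc$. As $\mathscr{A}$ is finite, some letter $a$ occurs infinitely often; say $x_{m_1}=x_{m_2}=\dotsb=a$, where $m_1<m_2<\dotsb$. The idea is then to take $r=m_1$ as the fixed position and to let the second position range over the remaining occurrences $m_2,m_3,\dotsc$ of $a$.

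Concretely, I would set $r=m_1$ and define $l_k=m_{k+1}-m_1$ and $n_k=p_{m_{k+1}}$, for $k=1,2,\dotsc$. Then $(n_k)$ and $(l_k)$ are increasing sequences of positive integers, and $r+l_k=m_{k+1}$. Since $w_{n_k}=w_{p_{m_{k+1}}}$ has $x_1\dotsc x_{m_{k+1}}$ as a prefix, both positions $m_1$ and $m_{k+1}$ fall inside the stabilised prefix, giving $\pi_r(w_{n_k})=x_{m_1}=a$ and $\pi_{r+l_k}(w_{n_k})=x_{m_{k+1}}=a$. The length condition $r+l_k=m_{k+1}<|w_{p_{m_{k+1}}}|$ holds because $|w_{p_n}|>n$, which also confirms that position $r+l_k$ genuinely exists in $w_{n_k}$.

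I expect no serious obstacle once the reduction to Lemma~\ref{lemA} is in place; the only point requiring care is arranging for \emph{one} of the two equal-letter positions to be fixed while the other varies. This is exactly what the stabilised prefix supplied by Lemma~\ref{lemA} delivers: it converts the two-position requirement into a single pigeonhole argument on the fixed letters $x_1,x_2,\dotsc$, with the earliest occurrence $m_1$ serving as the fixed index $r$ and the later occurrences furnishing the growing gaps $l_k$. A naive double pigeonhole directly on the words $(w_n)$ would be awkward precisely because it does not obviously pin down a common position across the subsequence, so routing through Lemma~\ref{lemA} is what makes the argument clean.
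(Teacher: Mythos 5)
Your argument is correct, but it takes a genuinely different route from the paper. You reduce the problem to Lemma~\ref{lemA}: extract a subsequence $(w_{p_n})$ with a stabilised prefix $x_1x_2\dotsc x_n$, apply the pigeonhole principle to the infinite letter sequence $(x_n)$ to find a letter $a$ recurring at positions $m_1<m_2<\dotsb$, and then take $r=m_1$, $l_k=m_{k+1}-m_1$, $n_k=p_{m_{k+1}}$. All the required properties check out: $|w_{p_n}|>n$ (guaranteed by the proof of Lemma~\ref{lemA}) gives the length condition $r+l_k=m_{k+1}\leqslant|w_{n_k}|$, and both positions $m_1$ and $m_{k+1}$ land inside the stabilised prefix, so the two letters are genuinely equal to $a$. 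The paper instead argues by induction on $|\mathscr{A}|$: it fixes a letter $e$ occurring as the first letter of infinitely many $w_n$, splits into the case where $e$ recurs at unboundedly late positions (done, with $r=1$) and the case where all occurrences of $e$ are confined to the first $s$ positions, in which case it strips those $s$ letters and recurses on the smaller alphabet $\mathscr{A}-\{e\}$. Your approach is shorter and avoids the case analysis entirely by reusing the diagonal extraction already packaged in Lemma~\ref{lemA}; the key observation that makes it work --- and which the paper's induction works around --- is that once the prefix is stabilised, the two-position requirement collapses to a single pigeonhole on a fixed infinite word. The paper's proof is self-contained modulo elementary pigeonhole, whereas yours leans on Lemma~\ref{lemA}, but since that lemma is already established earlier in the paper this costs nothing.
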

\begin{proof}
We prove the lemma by induction on the order $m$ of $\mathscr{A}$. Suppose that $m=1$. Then we choose $a$ to be the unique element of $\mathscr{A}$, and we choose $r=1$, $n_k=k+1$, and $l_k=|w_{k+1}|-1$, for $k=1,2,\dotsc$. This pair and these sequences satisfy the required properties.

Suppose now that the lemma is true for alphabets of order $m$, where $m$ is a positive integer. Let $\mathscr{A}$ be an alphabet of order $m+1$. Since $\mathscr{A}$ is finite, there is an element $e$ of $\mathscr{A}$ and an infinite subset $E$ of $\mathbb{N}$ with $\pi_1(w_n)=e$, for all $n\in E$.

Now, there may be increasing sequences of positive integers  $(n_k)$ and $(l_k)$ with $(n_k)$ in $E$ and with $1+l_k\leqslant |w_{n_k}|$ and $\pi_1(w_{n_k})=\pi_{1+l_k}(w_{n_k})=e$, for $k=1,2,\dotsc$. If that is so, then the pair $(1,e)$ and the sequences $(n_k)$ and $(l_k)$ satisfy the required properties.

If there are no such sequences, then there is an integer $s$ for which $\pi_j(w_n)\neq e$, for $j> s$ and $n\in E$. Let $E'$ denote the infinite subset of $E$ of those integers $n$ in $E$ with $|w_n|>s$. Let $p_1,p_2,\dotsc$ be the elements of $E'$, in increasing order, and define $v_n$ to be the word obtained from $w_{p_n}$ by removing the first $s$ letters from $w_{p_n}$. Then $(v_n)$ is a sequence in $(\mathscr{A}-\{e\})^+$
with $|v_1|<|v_2|<\dotsb$. Since $\mathscr{A}-\{e\}$ has order $m$ we can apply the inductive hypothesis to obtain a pair  $(r,a)\in\mathbb{N}\times(\mathscr{A}-\{e\})$ and two increasing sequences of positive integers $(n_k)$ and $(l_k)$ with $r+l_k\leqslant |v_{n_k}|$ and $\pi_r(v_{n_k})=\pi_{r+l_k}(v_{n_k})=a$, for $k=1,2,\dotsc$. Hence $(r+s, a)\in\mathbb{N}\times\mathscr{A}$ and $(r+s)+l_k\leqslant |w_{p_{n_k}}|$ and $\pi_{r+s}(w_{p_{n_k}})=\pi_{(r+s)+l_k}(w_{p_{n_k}})=a$, for $k=1,2,\dotsc$. Therefore the pair $(r+s,a)$ and the sequences $(p_{n_k})$ and $(l_k)$ satisfy the properties required by the lemma. This completes the proof by induction.
\end{proof}

We can now prove Theorem~\ref{thmD}.

\begin{proof}[Proof of Theorem~\ref{thmD}]
Let $X$ denote the collection of fixed points of $S$; that is, $X=\fix(S)$. 

Since $S$ has no fixed letters, it also has no fixed finite words, by Corollary~\ref{corB}. Suppose now that $x\in X$. Then $x$ is an infinite word fixed by some subtitution $f\in S$.  Lemma~\ref{lemI} tells us that $f^n(a)\to x$, where $a=\pi_1(x)$. Hence $x\in \Lambda$. It follows that $X\subseteq \Lambda$, and since $\Lambda$ is closed and $S$-invariant, we see that $\overline{S(X)}$ is contained in $\Lambda$.

Conversely, take $x\in \Lambda$. Then by Theorem~\ref{thmH} we can find $b\in\mathscr{A}$ and substitutions $g_1,g_2,\dotsc$ in $S$ for which $G_{n}(b)\to x$, where $G_n=g_1\circ g_2\circ \dots \circ g_n$. Let us define $G_{m,n}=g_m\circ g_{m+1}\circ \dotsb \circ g_n$, where $1\leqslant m\leqslant n$. We also define the $n$-letter word
\[
w_n=G_{n}[b]G_{2,n}[b]\dotsc G_{n,n}[b],
\]
for $n=1,2,\dotsc$. By Lemma~\ref{lemF}, there exists a pair $(r,a)\in\mathbb{N}\times\mathscr{A}$ and two increasing sequences of positive integers $(n_k)$ and $(l_k)$ with $r+l_k\leqslant |w_{n_k}|$ and 
\[
G_{r,n_k}[b]=G_{r+l_k,n_k}[b]=a.
\]
We define $g=G_{r-1}$ (possibly $r=1$ in which case instead we define $g$ to be the identity substitution) and $h_n=g_{n+r-1}$ and $H_n=h_1\circ h_2\circ\dotsb \circ h_n$, for $n=1,2,\dotsc$. Then
\[
H_{l_k}[a]= H_{l_k}[G_{r+l_k,n_k}[b]]=G_{r,n_k}[b]=a,
\]
for $k=1,2,\dotsc$. 

Since $(\mathscr{A}^\mathbb{N},d)$ is compact, we can find a convergent subsequence $(H_{p_k}(a))$ of $(H_{l_k}(a))$. The limit $y$ of this sequence must have first letter $a$, because $H_{l_k}[a]=a$, for all $k$. By Lemma~\ref{lemG}, $y\in\overline{X}$. Hence  $g\circ H_{p_k}(a)\to g(y)$, by continuity of $g$, and $g(y)\in\overline{S(X)}$. 

Next, by applying Lemma~\ref{lemK} with $x_k=a$, $y_k=G_{r+p_k,n_k}(b)$, and $F_k=H_{p_k}$, we see that $H_{p_k}(G_{r+p_k,n_k}(b))\to y$. But $H_{p_k}\circ G_{r+p_k,n_k}= G_{r,n_k}$, so $G_{r,n_k}(b)\to y$. Therefore $G_{n_k}(b)=g\circ G_{r,n_k}(b)\to g(y)$. However, $G_n(b)\to x$, which implies that $x=g(y)$. Thus $x\in\overline{S(X)}$, as required.
\end{proof}

\section{Cardinality of forward limit sets}\label{secH}

In this section we prove Theorem~\ref{thmE}, which gives sufficient conditions for the forward limit set of a substitution semigroup to be uncountable. We recall that a word $a$ is said to be a prefix of another word $b$ if $b=ac$, where $c$ is either another word or the empty word.

\begin{lemma}\label{lemH}
Let $f$ be a substitution of $\mathscr{A}$ for which $f(a)$ is not a prefix of $f(b)$, for any distinct $a,b\in\mathscr{A}$. Then $f$ is injective on $\widetilde{\mathscr{A}}$.
\end{lemma}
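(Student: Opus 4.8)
The plan is to reduce the statement to a single \emph{prefix-matching} step and then iterate it coordinate by coordinate. First I would record the exact form of the hypothesis that I will use. Since the condition ``$f(a)$ is not a prefix of $f(b)$'' is imposed for every \emph{ordered} pair of distinct letters, it says equivalently that for any two distinct letters $a,b$ neither of $f(a),f(b)$ is a prefix of the other. In particular $f(a)\neq f(b)$ whenever $a\neq b$ (as $f(a)=f(b)$ would make $f(a)$ a prefix of $f(b)$), so the set $\{f(a):a\in\mathscr{A}\}$ is a prefix code. This is the only consequence of the hypothesis that the argument needs.

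The engine of the proof is the following claim: if $x,y\in\widetilde{\mathscr{A}}$ satisfy $f(x)=f(y)$, then $\pi_1(x)=\pi_1(y)$. To prove it I would observe that $f(\pi_1(x))$ is a prefix of $f(x)$ and $f(\pi_1(y))$ is a prefix of $f(y)=f(x)$, so $f(\pi_1(x))$ and $f(\pi_1(y))$ are two prefixes of one common word; of two prefixes of a common word, the shorter is always a prefix of the longer. By the prefix-code property established in the previous paragraph this is impossible unless $\pi_1(x)=\pi_1(y)$.

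Then I would iterate. Because substitutions are non-erasing, $f$ sends finite words to finite words and infinite words to infinite words, so $f(x)=f(y)$ already forces $x$ and $y$ to be both finite or both infinite; it therefore suffices to treat these two cases. In each case I apply the key step to get $\pi_1(x)=\pi_1(y)=:c$, cancel the common prefix $f(c)$ from both sides of $f(x)=f(y)$ (legitimate since $f(c)$ contributes the same initial block on each side), and repeat with the tails. For infinite $x,y$ this matches every coordinate and yields $x=y$; for finite words the same is carried out by induction on $|x|$, the point being that a step which leaves one tail empty and the other nonempty cannot preserve the equality of $f$-images, again because $f$ is non-erasing. The only genuine content is the prefix-matching step; the part requiring care is the bookkeeping around finite versus infinite words and the exclusion of the empty word, which is precisely why I isolate the finite/infinite dichotomy before iterating.
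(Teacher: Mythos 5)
Your proposal is correct and follows essentially the same route as the paper: the key step in both is that $f(\pi_1(x))$ and $f(\pi_1(y))$ are prefixes of the common word $f(x)=f(y)$, so one is a prefix of the other, forcing $\pi_1(x)=\pi_1(y)$ by hypothesis, after which one cancels and iterates. Your version is slightly more explicit about the finite-versus-infinite bookkeeping and the non-erasing property, which the paper's proof leaves implicit.
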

\begin{proof}
Consider two infinite words $u=a_1a_2\dotsc$ and $v=b_1b_2\dotsc$ over $\mathscr{A}$, where $a_i$ and $b_j$ are letters of $\mathscr{A}$. Suppose that $f(u)=f(v)$. Then 
\[
f(a_1)f(a_2)\dotsc = f(b_1)f(b_2)\dotsc,
\]
so one of $f(a_1)$ and $f(b_1)$ must be a prefix of the other. Consequently, $a_1=b_1$. Now let $u'=a_2a_3\dotsc$ and $v'=b_2b_3\dotsc$. Then $f(u')=f(v')$, and we obtain $a_2=b_2$. This is the basis for an inductive argument to show that $a_n=b_n$, for each positive integer $n$, so $u=v$. Thus $f$ is injective on infinite words.

The same sort of argument can be used to show that $f$ is also injective on finite words. Hence $f$ is injective on $\widetilde{\mathscr{A}}$, as required.
\end{proof}

We can now prove Theorem~\ref{thmE}.

\begin{proof}[Proof of Theorem~\ref{thmE}]
We begin by observing that each substitution from $S$ is injective in its action on $\mathscr{A}^\mathbb{N}$, because the generators from $\mathscr{F}$ are injective, by Lemma~\ref{lemH}. 

Next, let $a$ be a letter from $\mathscr{A}$ for which there are at least two words in $\fix(S)$ with first letter $a$. We denote by $\Sigma(a)$ the set of those infinite words $x$ with first letter $a$ for which there is a sequence of substitutions $(g_n)$ in $S$ with $G_n(a)\to x$, where $G_n=g_1\circ g_2\circ \dotsb \circ g_n$. This set $\Sigma(a)$ contains all fixed points of $S$ with first letter $a$ (by Lemma~\ref{lemI}), so it has size at least two.
Observe that $\Sigma(a)$ is invariant under those elements $h$ of $S$ that satisfy $h[a]=a$.

Choose any infinite word $x$ in $\Sigma(a)$. Let $y$ be another infinite word in $\Sigma(a)$ and let $\varepsilon$ be any positive number that satisfies $\varepsilon<d(x,y)$. Let $(g_n)$ be a sequence in $S$ with $G_n(a)\to x$. By Lemma~\ref{lemG}, we have $x\in\overline{\fix(S)}$. Hence there is an infinite word $z$  that is fixed by some substitution $h\in S$ and that satisfies $d(x,z)<\varepsilon$. Since $\varepsilon<1$
it follows that $\pi_1(z)=a$ (so $z\in\Sigma(a)$) and $z\neq y$. 

Notice that $h^n(a)\to z$, by Lemma~\ref{lemI}, so $h^n(y)\to z$, by Lemma~\ref{lemK}. Each iterate $h^n(y)$ lies in $\Sigma(a)$, by invariance of $\Sigma(a)$ under $h$. Also, none of the iterates equal $z$, because $h$ is injective and $h(z)=z$. Consequently, $\Sigma(a)$ accumulates at $z$. Since $\varepsilon$ can be chosen to be arbitrarily small, we see that $\Sigma(a)$ also accumulates at $x$. It follows that $\overline{\Sigma(a)}$ is a perfect set.
Since $(\mathscr{A}^\mathbb{N},d)$ is complete, and the forward limit set $\Lambda$ of $S$ contains $\overline{\Sigma(a)}$, we deduce that $\Lambda$ is uncountable, as required.
\end{proof}

\section{Hausdorff dimension of forward limit sets}\label{secI}

In this section we prove Theorem~\ref{thmF} on the logarithmic Hausdorff dimension of forward limit sets. We begin with some background material on Hausdorff dimension, which can be found in \cite{Fa2003,HaKe1976,Ro1998}.  

We work in the metric space $(\mathscr{A}^\mathbb{N},d)$. For $\varepsilon>0$, an \emph{$\varepsilon$-cover} of a subset $X$ of $\mathscr{A}^\mathbb{N}$ is a sequence $(U_n)$ of subsets of $\mathscr{A}^\mathbb{N}$ for which $X\subseteq \bigcup_n U_n$, and such that, for each $n$, the diameter $\diam U_n$ of $U_n$ does not exceed $\varepsilon$. A \emph{dimension function} is a function $\phi\colon [0,+\infty) \longrightarrow [0,+\infty)$ that is increasing, continuous, and satisfies $\phi(0)=0$. Let 
\[
H^\phi_\varepsilon(X) = \inf \left\{ \sum_{n=1}^\infty {\phi(\diam U_n})\, :\,\text{$(U_n)$ is an $\varepsilon$-cover of $X$} \right\}
\]
and 
\[
H^\phi(X) = \lim_{\varepsilon\to 0}H^\phi_\varepsilon(X).
\]
The function $H^\phi$ is an outer measure on $\mathscr{A}^\mathbb{N}$  known as \emph{Hausdorff measure} with respect to~$\phi$.

Given two dimension functions $\phi$ and $\psi$ such that $\phi(x)/\psi(x)\to 0$ as $x\to 0$ it is straightforward to show that if $H^\psi(X)<+\infty$ then $H^\phi(X)=0$. The dimension functions most widely used are the collection $\phi_t(x)=x^t$, for $t>0$. For $t_1<t_2$ we have $\phi_{t_2}(x)/\phi_{t_1}(x) \to 0$ as $x\to 0$, so there is a unique value $d$  in $[0,+\infty]$ such that $H^{\phi_t}(X)=+\infty$, for $t<d$, and $H^{\phi_t}(X)=0$, for $t>d$. This value $d$ is the \emph{Hausdorff dimension of $X$}. 

An alternative collection of dimension functions is 
\[
\psi_t(x) = 
\begin{cases}
1/\mleft(\log \tfrac{1}{x}\mright)^{\!t} & \textnormal{if $0<x\leqslant 1/e$},\\
ex & \textnormal{otherwise},
\end{cases}
\]
 for $t>0$. It is only the value of $\psi_t$ near $0$ that matters; the extension to $[0,+\infty)$ is  chosen for convenience. For $t_1<t_2$ we have $\psi_{t_2}(x)/\psi_{t_1}(x) \to 0$ as $x\to 0$, so there is a unique value $d$ in $[0,+\infty]$ such that $H^{\psi_t}(X)=+\infty$, for $t<d$, and $H^{\psi_t}(X)=0$ for $t>d$. This value $d$ is the \emph{ logarithmic Hausdorff dimension of $X$}. Notice that, for any positive numbers $t_1$ and $t_2$, $\phi_{t_2}(x)/\psi_{t_1}(x) \to 0$ as $x\to 0$, which implies that if $X$ has finite logarithmic Hausdorff dimension, then it has Hausdorff dimension zero.

We recall from the introduction that the length of a finite set of substitutions $\mathscr{F}$ is the least length (number of letters) of $f(a)$, among all letters $a\in\mathscr{A}$ and substitutions $f\in\mathscr{F}$. 

The following lemma can be proved quickly by induction; we omit the details.

\begin{lemma}\label{lemN}
Let $S$ be a substitution semigroup generated by a finite set $\mathscr{F}$ of substitutions of length $r$. Then for any word $w$ over $\mathscr{F}$ of length $n$ and any letter $a$ in $\mathscr{A}$, $|F(w)(a)|$ has length at least $r^n$.
\end{lemma}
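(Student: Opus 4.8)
The plan is to induct on the length $n$ of the word $w$, driven by the single observation that applying any one generator expands the length of a word by a factor of at least $r$. Since $F$ sends a word $f_1f_2\dotsc f_n$ over $\mathscr{F}$ to the composition $f_1\circ f_2\circ\dots\circ f_n$, iterating this expansion $n$ times will give the factor $r^n$.

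First I would record the key inequality. For any $f\in\mathscr{F}$ and any word $u=b_1b_2\dotsc b_m$ over $\mathscr{A}$, the morphism property gives $f(u)=f(b_1)f(b_2)\dotsc f(b_m)$, so that
\[
|f(u)|=\sum_{i=1}^{m}|f(b_i)|\geqslant rm=r|u|,
\]
where each term satisfies $|f(b_i)|\geqslant r$ directly from the definition of the length $r$ of $\mathscr{F}$. Thus applying any generator multiplies length by at least $r$.

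For the induction, the base case $n=1$ is immediate: a word of length one is a single generator $f$, and $|f(a)|\geqslant r=r^1$ by the definition of length. For the inductive step, suppose the bound holds for all words of length $n$, and let $w=f_1f_2\dotsc f_{n+1}$ have length $n+1$. Writing $w=f_1w'$, where $w'=f_2\dotsc f_{n+1}$ has length $n$, we have $F(w)=f_1\circ F(w')$, so $F(w)(a)=f_1\bigl(F(w')(a)\bigr)$. The inductive hypothesis gives $|F(w')(a)|\geqslant r^n$, and applying the key inequality with $f=f_1$ and $u=F(w')(a)$ yields
\[
|F(w)(a)|=\bigl|f_1\bigl(F(w')(a)\bigr)\bigr|\geqslant r\,|F(w')(a)|\geqslant r\cdot r^n=r^{n+1},
\]
completing the induction.

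There is no genuine obstacle here; the only point requiring a little care is bookkeeping the composition order, so that the generator peeled off (here $f_1$, the one applied last in the functional composition) is precisely the one to which the key inequality is applied. Everything else is a routine unwinding of the morphism property and the definition of the length of $\mathscr{F}$, which is presumably why the authors chose to omit the details.
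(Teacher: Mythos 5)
Your proof is correct and is exactly the quick induction the paper alludes to when it says the lemma ``can be proved quickly by induction'' and omits the details: the key inequality $|f(u)|\geqslant r|u|$ follows from the morphism property and the definition of length, and peeling off the outermost generator $f_1$ in the inductive step is the right bookkeeping. Nothing is missing.
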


We can now prove the first part of Theorem~\ref{thmF}, which says that for a substitution semigroup generated by a finite set $\mathscr{F}$ of substitutions of length $r$ and size $s$, where $r>1$, the forward limit set of $S$ has logarithmic Hausdorff dimension at most $\log_rs$.

\begin{proof}[Proof of Theorem~\ref{thmF}: Part 1]
Let $\mathscr{F}^n$ denote the collection of words over $\mathscr{F}$ of length $n$. Given a word $w$ in $\mathscr{F}^n$ and $a\in\mathscr{A}$, we define the cylinder set
\[
X(w,a) = \{x\in\mathscr{A}^\mathbb{N}: \text{$x$ has prefix $F(w)(a)$}\},
\]
which is both closed and open. Let $\ell$ be the maximal possible length of $F(w)(a)$, among all words $w$ in $\mathscr{F}^n$ and letters $a\in\mathscr{A}$.

We will prove that the collection $\mathscr{X}_n=\{X(w,a): w\in\mathscr{F}^n,a\in\mathscr{A}\}$ covers $\Lambda$. To see this, choose $x\in\Lambda$. Then we can find a letter $b$ and a word $u$ over $\mathscr{F}$ for which $x$ and $F(u)(b)$ share a common prefix of length $\ell$. Let $w$ be the truncation of $u$ after $n$ letters (so $w\in\mathscr{F}^n$) and let $v$ be such that $u=wv$ (possibly $v$ is the empty word, in which case we define $F(v)(b)=b$). Let $a=\pi_1(F(v)(b))$. Then $F(w)(a)$ and $F(w)(F(v)(b))$ share a common prefix $F(w)(a)$, which has length less than or equal to $\ell$. But $F(w)(F(v)(b))=F(u)(b)$, so $x$ and $F(w)(a)$ share a common prefix $F(w)(a)$. Hence $x\in X(w,a)$.

Next, observe that $\diam(X(w,a))\leqslant 1/2^{r^n}$. Therefore, for any $\varepsilon>0$, the collection $\mathscr{X}_n$ is an $\varepsilon$-cover of $\Lambda$, for sufficiently large values of $n$. Using the dimension functions $\psi_t$ defined earlier in this section, we see that
\[
\psi_t(\diam(X(w,a)))\leqslant \frac{1}{(\log 2)^tr^{nt}}.
\]
Hence
\[
\sum_{X(w,a)\in\mathscr{X}_n}\psi_t(\diam(X(w,a))) \leqslant \frac{\lvert\mathscr{A}\rvert s^n}{(\log 2)^tr^{nt}}\leqslant \lvert\mathscr{A}\rvert\mleft(\frac{s}{r^{t}}\mright)^{\!n}.
\]
If $s/r^t<1$, or equivalently $t>\log_rs$, then $H^{\psi_t}(\Lambda)=0$. Therefore the logarithmic Hausdorff dimension of $\Lambda$ is less than or equal to $\log_rs$.
\end{proof}

We now turn to the other part of Theorem~\ref{thmF}, which says that, for any pair of positive integers $r$ and $s$ with $r>1$ and $s\leqslant |\mathscr{A}|^{r-1}$, there exists a substitution semigroup $S$ generated by a set of substitutions of length $r$ and size $s$ for which the logarithmic Hausdorff dimension of $\Lambda$ attains the bound $\log_rs$. In proving this we use the following lemma.

\begin{lemma}\label{lemO}
Let $\mathscr{F}$ be a set of $s$ substitutions for which $f[a]=a$, $|f(a)|=|g(a)|$, and $f(a)\neq g(a)$, for each $a\in\mathscr{A}$ and each distinct pair $f,g\in\mathscr{F}$. Then, for each positive integer $n$ and $a\in\mathscr{A}$, the $s^n$ words $f_1\circ f_2\circ \dots \circ f_n(a)$, where $f_i\in\mathscr{F}$, are distinct.
\end{lemma}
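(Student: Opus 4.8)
The plan is to prove the statement by induction on $n$, peeling off the outermost (leftmost) substitution at each step. Two preliminary facts should be established first: (i) each $f\in\mathscr{F}$ is injective on $\widetilde{\mathscr{A}}$, and (ii) applying any composition $f_1\circ\dotsb\circ f_n$ to the letter $a$ yields a word whose first letter is $a$.

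For (i), I would observe that the hypothesis $f[a]=a$ means $f(a)$ begins with the letter $a$, for every $a\in\mathscr{A}$. Hence, for distinct letters $a,b$, the words $f(a)$ and $f(b)$ begin with distinct letters, so neither can be a prefix of the other. Lemma~\ref{lemH} then applies and shows that $f$ is injective on $\widetilde{\mathscr{A}}$. For (ii), I would argue by a short induction: if $w$ begins with $a$ then $f(w)$ begins with $f(a)$, which in turn begins with $f[a]=a$; applying this repeatedly to $f_n(a),\,f_{n-1}(f_n(a)),\dotsc$ shows that $f_1\circ\dotsb\circ f_n(a)$ begins with $a$.

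Now set $\ell(a)=|f(a)|$, which is well defined because the hypothesis $|f(a)|=|g(a)|$ makes this length independent of the choice of $f$. For the main induction I would suppose $f_1\circ\dotsb\circ f_n(a)=g_1\circ\dotsb\circ g_n(a)$, with all $f_i,g_i\in\mathscr{F}$. By fact (ii) the inner word $f_2\circ\dotsb\circ f_n(a)$ begins with $a$, so $f_1\circ\dotsb\circ f_n(a)$ has $f_1(a)$ as a prefix; likewise $g_1\circ\dotsb\circ g_n(a)$ has $g_1(a)$ as a prefix. Since $|f_1(a)|=\ell(a)=|g_1(a)|$, comparing the first $\ell(a)$ letters of the two equal words gives $f_1(a)=g_1(a)$, whence $f_1=g_1$ because the hypothesis $f(a)\neq g(a)$ for distinct $f,g$ makes $f\mapsto f(a)$ injective. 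Cancelling the common injective map $f_1=g_1$ by means of fact (i) reduces the equation to $f_2\circ\dotsb\circ f_n(a)=g_2\circ\dotsb\circ g_n(a)$, to which the inductive hypothesis applies, giving $f_i=g_i$ for $i\geqslant 2$ and completing the induction; the base case $n=1$ is exactly the hypothesis $f(a)\neq g(a)$.

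I expect the only real subtlety to be the bookkeeping that links the equal-length hypothesis to the prefix comparison: it is precisely the equality $|f(a)|=|g(a)|$ that lets one read off $f_1(a)$ and $g_1(a)$ as the length-$\ell(a)$ prefixes of the two equal words and conclude they coincide. The deduction of injectivity of the generators from Lemma~\ref{lemH} is the other essential ingredient, since without it one could not cancel the leading substitution to descend in the induction.
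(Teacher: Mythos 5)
Your proof is correct and follows essentially the same route as the paper: the same induction peeling off the leftmost substitution, the same use of $f[a]=a$ to identify $f_1(a)$ and $g_1(a)$ as equal-length prefixes, and the same appeal to Lemma~\ref{lemH} for injectivity to cancel $f_1=g_1$. Your explicit justification that the prefix hypothesis of Lemma~\ref{lemH} holds (distinct letters give images with distinct first letters) is a small detail the paper leaves implicit.
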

\begin{proof}
Let $a\in\mathscr{A}$. We assert that, for each positive integer $n$, the $s^n$ words $f_1\circ f_2\circ \dots \circ f_n(a)$, for $f_i\in\mathscr{F}$, are distinct.

This assertion is true for $n=1$ (one of the hypotheses of the lemma). Suppose that the assertion is true for $n=m-1$, where $m>1$. Now assume that $f_1\circ f_2\circ \dots \circ f_m(a)=g_1\circ g_2\circ \dots \circ g_m(a)$, for $f_i,g_i\in\mathscr{F}$. Observe that $f_2\circ f_3\circ \dots \circ f_m[a]=a$, so $f_1(a)$ is a prefix of $f_1\circ f_2\circ \dots \circ f_m(a)$ of length $|f_1(a)|$. Likewise $g_1(a)$ is a prefix of $g_1\circ g_2\circ \dots \circ g_m(a)$ of length $|g_1(a)|$. Since $|f_1(a)|=|g_1(a)|$, we see that $f_1(a)=g_1(a)$, so $f_1=g_1$. Now, $f_1$ is injective on $\mathscr{A}^+$ by Lemma~\ref{lemH}, so $f_2\circ f_3\circ \dots \circ f_m(a)=g_2\circ g_3\circ \dots \circ g_m(a)$. It follows from the inductive hypothesis that $f_i=g_i$, for $i=2,3,\dots,m$, as required.
\end{proof}

Now we prove the second part of Theorem~\ref{thmF}.

\begin{proof}[Proof of Theorem~\ref{thmF}: Part 2] 
Recall that $r>1$ and $s\leqslant |\mathscr{A}|^{r-1}$. Observe that there are $|\mathscr{A}|^{r-1}$ words over $\mathscr{A}$ of length $r$ with first letter $a$. Consequently, we can choose a set $\mathscr{F}$ of $s$ substitutions that satisfy $f[a]=a$, $\lvert f(a)\rvert=r$, and $f(a)\neq g(a)$, for each $a\in\mathscr{A}$ and each distinct pair $f,g\in\mathscr{F}$. Let $S$ be the substitution semigroup generated by $\mathscr{F}$, with forward limit set $\Lambda$.

We now define a probability measure $\mu$ on $\Lambda$ using the following standard procedure. Let $\mathscr{X}_n$ be the cover of $\Lambda$ introduced earlier (for $n=1,2,\dotsc$) and let $\mathscr{X}_0=\mathscr{A}^\mathbb{N}$. Observe that, by Lemma~\ref{lemO}, any two members of $\mathscr{X}_n$ are disjoint. Also, $\lvert \mathscr{X}_n\rvert =\lvert\mathscr{A}\rvert s^n$. Each set $U$ in $\mathscr{X}_n$ is contained in one set from $\mathscr{X}_{n-1}$ and contains $s$ sets from $\mathscr{X}_{n+1}$ The diameter of $U$ is $1/2^{r^n}$. Observe that the intersection of any nested sequence of sets $(U_n)$, with $U_n\in \mathscr{X}_n$, comprises a single infinite word in $\mathscr{A}^\mathbb{N}$. 

For $U$ in $\mathscr{X}_n$, we define
\[
\mu(U) = \frac{1}{\lvert\mathscr{A}\rvert s^n}
\]
and $\mu(E)=0$, where $E$ is the complement in $\mathscr{A}^\mathbb{N}$ of the union of all the members of $\mathscr{X}_n$. By \cite[Proposition~1.7]{Fa2003}, $\mu$ can be extended to a probability measure on $\mathscr{A}^\mathbb{N}$ that is supported on
\[
\Delta = \bigcap_{n=1}^\infty \bigcup_{U\in \mathscr{X}_n} U.
\]
Note that $\Delta \supseteq \Lambda$, since $\mathscr{X}_n$ is a cover of $\Lambda$. Conversely, if $x\in \Delta$, then there is $a\in\mathscr{A}$ and $w_n\in\mathscr{F}^n$, for $n=1,2,\dotsc$, with $F(w_n)(a)\to x$. Hence $x\in \Lambda$. Therefore $\Delta=\Lambda$, so $\mu$ is supported on $\Lambda$.

Suppose now that $(U_n)$ is any $\varepsilon$-cover of $\Lambda$, where $\varepsilon=1/2^r$. Consider some particular set $U$ in this cover, and let $k$ be the positive integer such that 
\[
\frac{1}{2^{r^{k+1}}}\leqslant \diam U < \frac{1}{2^{r^k}}.
\]
Then $U$ can intersect at most one of the sets from $\mathscr{X}_k$. Consequently,
\[
\mu(U) \leqslant \frac{1}{\lvert\mathscr{A}\rvert s^k}.
\]
Also,
\[
\psi_t(\diam U) \geqslant \psi_t\big(1/2^{r^{k+1}}\big)=\frac{1}{(r\log 2)^tr^{kt}}.
\]
Let $t=\log_rs$. Then 
\[
\psi_t(\diam U) \geqslant\frac{1}{(r\log 2)^ts^k}.
\]
Combining these inequalities gives
\[
\mu(U) \leqslant c\,\psi_t(\diam U),\qquad\text{where }c=\frac{(r\log 2)^t}{\lvert\mathscr{A}\rvert}.
\]
Thus
\[
\sum_{n=1}^\infty \psi_t(\diam U_n) \geqslant \frac{1}{c}\sum_{n=1}^\infty \mu(U_n) \geqslant \frac{1}{c} \mu \mleft(\bigcup_{n=1}^\infty U_n\mright)
>0.
\]
Thus $H^{\psi_t}(\Lambda)>0$, so $\Lambda$ has logarithmic Hausdorff dimension at least $t=\log_rs$. By applying the first part of the proof we see that in fact $\Lambda$ has logarithmic Hausdorff dimension exactly $\log_rs$, as required.
\end{proof}

\section{Hull of a substitution semigroup}\label{secJ}

Central to the study of substitution dynamics is the concept of the hull of a substitution. Here we introduce the hull of a substitution semigroup and characterise it by its invariant properties.

First we define the shift map $\sigma$, which satisfies $\sigma(a_1a_2\dotsc)=a_2a_3\dotsc$. More precisely, $\sigma$ is the self-map of $\widetilde{\mathscr{A}}$ that satisfies
\[
\pi_k(\sigma(w))=\pi_{k+1}(w),
\]
for $k\in\mathbb{N}$ (with $k<|w|$ if $w$ is finite). This map is continuous.

\begin{definition}
The \emph{hull} of the letter $a$ of $\mathscr{A}$ for the substitution semigroup $S$ is the set
\[
\Omega(a)=\overline{\bigcup_{n=0}^{\infty}\sigma^n(\Lambda(a))}\;.
\]
The \emph{hull}  of $S$ is the set $\bigcup_{a\in\mathscr{A}} \Omega(a)$. We denote this set by $\Omega$.
\end{definition}

The hull $\Omega(a)$ is closed and shift invariant, as can easily be verified. We will prove shortly that it is $S$-invariant. For this we need the next lemma.

\begin{lemma}\label{lemP}
Let $h$ be a substitution of an alphabet $\mathscr{A}$ and let $y$ be an infinite word over $\mathscr{A}$. Then for any positive integer $k$ there exists a positive integer $\ell$ for which
\[
h(\sigma^k(y))=\sigma^\ell(h(y)).
\]
\end{lemma}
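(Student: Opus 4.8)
The plan is to compute both sides explicitly by writing $y$ as a sequence of letters and tracking how the shift interacts with the substitution. First I would write $y=a_1a_2\dotsc$, where each $a_i\in\mathscr{A}$, so that $\sigma^k(y)=a_{k+1}a_{k+2}\dotsc$ and hence, applying $h$ letter by letter,
\[
h(\sigma^k(y))=h(a_{k+1})h(a_{k+2})\dotsc.
\]
On the other hand, $h(y)=h(a_1)h(a_2)\dotsc$, which I would regroup as the concatenation of the finite block $h(a_1)\dotsc h(a_k)$ followed by the infinite word $h(a_{k+1})h(a_{k+2})\dotsc$.

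The key step is to identify the correct shift exponent. Setting $\ell=|h(a_1a_2\dotsc a_k)|=\sum_{i=1}^{k}|h(a_i)|$, the word $\sigma^\ell(h(y))$ is obtained from $h(y)$ by deleting its first $\ell$ letters, which is precisely the block $h(a_1)\dotsc h(a_k)$. Thus $\sigma^\ell(h(y))=h(a_{k+1})h(a_{k+2})\dotsc=h(\sigma^k(y))$, as required. If I wanted to be fully rigorous about the equality of these two infinite words, I would instead verify $\pi_j(\sigma^\ell(h(y)))=\pi_j(h(\sigma^k(y)))$ for every $j\in\mathbb{N}$ directly from the definitions of $\pi_j$ and $\sigma$.

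Finally, I would check that $\ell$ really is a positive integer, as the statement demands: since $h$ is a substitution it is non-erasing, so $|h(a_i)|\geqslant 1$ for each $i$, giving $\ell\geqslant k\geqslant 1$. This is essentially the only point requiring any care; the rest is a direct unwinding of the definitions, and I anticipate no genuine obstacle.
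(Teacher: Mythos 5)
Your argument is correct and is essentially the paper's own proof: the paper writes $y=xz$ with $|x|=k$ and takes $\ell=|h(x)|$, which is exactly your $\ell=\sum_{i=1}^{k}|h(a_i)|$. Your additional check that $\ell\geqslant 1$ because substitutions are non-erasing is a reasonable extra detail the paper leaves implicit.
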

\begin{proof}
We can write $y=xz$, where $x$ is a finite word of length $k$ and $z$ is an infinite word. Let $\ell=|h(x)|$. Then 
\[
\sigma^\ell(h(y))=h(z)=h(\sigma^k(y)),
\]
as required.
\end{proof}

Now we can prove that every hull $\Omega(a)$ is $S$-invariant.

\begin{lemma}\label{lemQ}
Let $S$ be a substitution semigroup of an alphabet $\mathscr{A}$ and let $a\in\mathscr{A}$. The hull $\Omega(a)$ is $S$-invariant.
\end{lemma}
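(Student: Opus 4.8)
The plan is to fix a substitution $g\in S$ and show that $g(\Omega(a))\subseteq\Omega(a)$; since $g$ is arbitrary this is exactly the statement that $\Omega(a)$ is $S$-invariant. Write $U=\bigcup_{n=0}^{\infty}\sigma^n(\Lambda(a))$, so that $\Omega(a)=\overline{U}$ by definition. I would first establish the sharper inclusion $g(U)\subseteq U$ for the (not necessarily closed) union $U$, and only afterwards pass to the closure.

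To see that $g(U)\subseteq U$, take a point $w\in U$, say $w=\sigma^n(y)$ with $y\in\Lambda(a)$ and $n\geqslant 0$. By Lemma~\ref{lemS}, the forward limit set $\Lambda(a)$ is $S$-invariant, so $g(y)\in\Lambda(a)$. If $n=0$, then $g(w)=g(y)\in\Lambda(a)\subseteq U$. If $n\geqslant 1$, then Lemma~\ref{lemP} applied with $h=g$ and $k=n$ produces a positive integer $\ell$ for which $g(\sigma^n(y))=\sigma^\ell(g(y))$; hence $g(w)=\sigma^\ell(g(y))\in\sigma^\ell(\Lambda(a))\subseteq U$. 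In either case $g(w)\in U$, as wanted.

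Finally I would pass to the closure. Since each substitution acts continuously on $\widetilde{\mathscr{A}}$, continuity of $g$ gives $g(\overline{U})\subseteq\overline{g(U)}$. Combining this with the inclusion $g(U)\subseteq U$ just proved yields $g(\Omega(a))=g(\overline{U})\subseteq\overline{g(U)}\subseteq\overline{U}=\Omega(a)$, which is the desired conclusion.

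The argument is essentially routine once Lemma~\ref{lemP} is available, and I do not anticipate a genuine obstacle. The only point that requires a moment of care is the interaction between $g$ and the shifts $\sigma^n$, which is precisely what Lemma~\ref{lemP} is designed to control; in particular one should note that the index $\ell$ it returns is a positive integer, so that $\sigma^\ell(\Lambda(a))$ is indeed one of the sets appearing in the union defining $U$.
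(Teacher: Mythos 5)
Your proposal is correct and follows essentially the same route as the paper: show the unclosed union $\bigcup_n\sigma^n(\Lambda(a))$ is mapped into itself using the $S$-invariance of $\Lambda(a)$ together with Lemma~\ref{lemP}, then pass to the closure by continuity. Your explicit separation of the case $n=0$ is in fact slightly more careful than the paper's own write-up, since Lemma~\ref{lemP} is stated only for positive $k$.
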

\begin{proof}
Let $x\in\bigcup_n \sigma^n(\Lambda(a))$. Then $x\in\sigma^k(\Lambda(a))$, for some non-negative integer $k$. Hence there exists a sequence $(F_n)$ in $S$ and a word $y\in\Lambda(a)$ such that $F_n(a)\to y$ and $\sigma^k(y)=x$.

Let $h\in S$. By Lemma~\ref{lemP}, there is a positive integer $\ell$ such that
\[
h(\sigma^k(y))=\sigma^{\ell}(h(y)).
\]
That is, $h(x)=\sigma^{\ell}(h(y))$. Now, we know that $h(y)\in \Lambda(a)$, because $\Lambda(a)$ is $S$-invariant. Therefore $h(x)\in\sigma^{\ell}(\Lambda(a))$. 

This argument shows that $\bigcup_n \sigma^n(\Lambda(a))$ is $S$-invariant. By continuity of the action of $S$, the set $\Omega(a)$ is also $S$-invariant.
\end{proof}

We describe a substitution semigroup $S$ as \emph{irreducible} if, for any two letters $a$ and $b$ of $\mathscr{A}$, there exists an element $f$ of $S$ for which the word $f(a)$ contains the letter $b$. A subset $X$ of $\mathscr{A}^\mathbb{N}$ is said to be \emph{shift invariant} if $\sigma(X)\subseteq X$.

\begin{lemma}\label{lemT}
Let $S$ be an irreducible substitution semigroup and let $X$ be a closed subset of $\mathscr{A}^\mathbb{N}$ that is both $S$-invariant and shift invariant. Then $X$ contains $\Omega(a)$, for any $a\in\mathscr{A}$.
\end{lemma}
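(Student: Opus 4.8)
The plan is to reduce the statement to the single inclusion $\Lambda(a)\subseteq X$ and then to obtain that inclusion by showing every letter of $\mathscr{A}$ occurs as the first letter of some element of $X$.

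First I would record the reduction. Since $X$ is shift invariant, $\sigma^n(X)\subseteq X$ for every $n\geqslant 0$, so once we know $\Lambda(a)\subseteq X$ we get $\sigma^n(\Lambda(a))\subseteq\sigma^n(X)\subseteq X$ for all $n$, whence $\bigcup_{n=0}^\infty \sigma^n(\Lambda(a))\subseteq X$. Taking closures and using that $X$ is closed yields $\Omega(a)\subseteq\overline{X}=X$. Thus it suffices to prove $\Lambda(a)\subseteq X$ for each $a\in\mathscr{A}$.

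Next I would show that $\pi_1(X)=\mathscr{A}$. (Here I assume, as the surrounding discussion does, that $X$ is non-empty: the empty set satisfies the hypotheses yet would force $\Omega(a)=\varnothing$, so non-emptiness is genuinely needed.) Fix $x_0\in X$ and set $b=\pi_1(x_0)$. Given an arbitrary letter $a$, irreducibility of $S$ supplies $g\in S$ for which $g(b)$ contains $a$, say at position $m+1$, so that $g(b)=u\,a\,v$ with $|u|=m$. By $S$-invariance, $g(x_0)\in X$, and since $g(x_0)$ begins with the word $g(b)$, its $(m+1)$th letter is $a$. Applying shift invariance $m$ times gives $\sigma^m(g(x_0))\in X$, an infinite word with first letter $a$. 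Hence $a\in\pi_1(X)$, and as $a$ was arbitrary, $\pi_1(X)=\mathscr{A}$. I would then invoke Lemma~\ref{lemM} with $A=\pi_1(X)=\mathscr{A}$ to conclude $\Lambda=\Lambda(\mathscr{A})\subseteq X$, so in particular $\Lambda(a)\subseteq X$ for each $a$, which with the first paragraph finishes the proof. (Alternatively, one can argue $\Lambda(a)\subseteq X$ directly without any finiteness hypothesis: for $z\in\Lambda(a)$ choose $G_k\in S$ with $G_k(a)\to z$, note $|G_k(a)|\to\infty$ since $z$ is infinite, pick $x'\in X$ with $\pi_1(x')=a$, and apply Lemma~\ref{lemK} to get $G_k(x')\to z$ with $G_k(x')\in X$, so $z\in X$ by closedness.)

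The main obstacle is the middle step: irreducibility only guarantees that the target letter $a$ appears \emph{somewhere} inside $g(b)$, not that it is the first letter, so the monotone containment of Lemma~\ref{lemE} — which is driven by walks in $G_{\mathscr{F}}$, i.e.\ by first letters — does not by itself reach every letter. The resolution is that shift invariance of $X$ lets me slide past the prefix $u$ and expose $a$ as the first letter of an element of $X$; this is exactly the role of the extra shift-invariance hypothesis, and it is what makes the hull, rather than the forward limit set alone, the natural object in this statement.
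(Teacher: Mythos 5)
Your proposal is correct and follows essentially the same route as the paper: both use irreducibility together with $S$-invariance and shift invariance to produce an element of $X$ whose first letter is $a$, then apply Lemma~\ref{lemK} (your Lemma~\ref{lemM} route is just that argument packaged, and your direct alternative matches the paper verbatim in substance) to get $\Lambda(a)\subseteq X$, and finally use shift invariance and closedness to pass to $\Omega(a)$. Your observation that $X$ must implicitly be assumed non-empty is a fair and correct remark --- the paper's proof also begins ``choose $x\in X$'' without comment, and the non-emptiness hypothesis only appears explicitly in the surrounding discussion.
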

\begin{proof}
Choose $x\in X$. Since $S$ is irreducible, there is a substitution $h\in S$ for which the word $h(x)$ contains the letter $a$. Hence there is a non-negative integer $k$ for which $\pi_1(y)=a$, where $y=\sigma^{k}(h(x))$. Observe that $y\in X$, by $S$-invariance and shift invariance of $X$. 

Now choose $u\in \Lambda(a)$. Then there exists a sequence $(F_n)$ in $S$ with $F_n(a)\to u$. By applying Lemma~\ref{lemK} we see that $F_n(y)\to u$ also. Then $u\in X$ by $S$-invariance and closure of $X$. Consequently $\Lambda(a)\subseteq X$. Therefore $\Omega(a)\subseteq X$, by shift invariance, as required.
\end{proof}

An immediate corollary of Lemma~\ref{lemT} is that there is only one hull for an irreducible substitution semigroup.

\begin{corollary}\label{corD}
Let $S$ be an irreducible substitution semigroup. Then $\Omega(a)=\Omega(b)$, for all $a,b\in\mathscr{A}$.
\end{corollary}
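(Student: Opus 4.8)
The plan is to establish both inclusions $\Omega(a)\subseteq\Omega(b)$ and $\Omega(b)\subseteq\Omega(a)$ and then combine them, with the entire argument resting on a single application of Lemma~\ref{lemT} together with a symmetry observation. The essential point is that each hull $\Omega(b)$ is itself an eligible candidate for the set $X$ appearing in Lemma~\ref{lemT}.

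First I would verify that, for any fixed $b\in\mathscr{A}$, the set $X=\Omega(b)$ satisfies all three hypotheses demanded of $X$ in Lemma~\ref{lemT}. Closedness holds by construction, since $\Omega(b)$ is defined as a closure; shift invariance is recorded in the remark immediately following the definition of the hull; and $S$-invariance is precisely the content of Lemma~\ref{lemQ}. With these three properties confirmed, Lemma~\ref{lemT} applies directly to $X=\Omega(b)$ and yields $\Omega(a)\subseteq\Omega(b)$ for every $a\in\mathscr{A}$.

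Next I would exploit the symmetry of this conclusion. Interchanging the roles of $a$ and $b$ — that is, running the identical argument with $X=\Omega(a)$, which likewise is closed, shift invariant, and $S$-invariant — gives $\Omega(b)\subseteq\Omega(a)$. Combining the two inclusions yields $\Omega(a)=\Omega(b)$, as required.

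There is no real obstacle here: the corollary is immediate once one notices that a hull is exactly the type of set to which Lemma~\ref{lemT} speaks. The only point requiring a moment's care is the bookkeeping that confirms $\Omega(b)$ genuinely meets all three structural hypotheses before Lemma~\ref{lemT} is invoked; each of these is supplied by an earlier result or remark, so the verification is routine rather than substantive.
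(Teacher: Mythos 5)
Your proposal is correct and follows exactly the paper's own argument: verify that $\Omega(b)$ is closed, shift invariant, and (by Lemma~\ref{lemQ}) $S$-invariant, apply Lemma~\ref{lemT} to obtain $\Omega(a)\subseteq\Omega(b)$, and then swap the roles of $a$ and $b$. No differences worth noting.
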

\begin{proof}
The hull $\Omega(b)$ is closed and shift invariant, and it is $S$-invariant by Lemma~\ref{lemQ}. Hence $\Omega(a)\subseteq \Omega(b)$, by Lemma~\ref{lemT}. The roles of $a$ and $b$ can be reversed to give $\Omega(a)=\Omega(b)$.
\end{proof}

Therefore the hull $\Omega$ of an irreducible substitution semigroup $S$ is equal to $\Omega(a)$, for any $a\in\mathscr{A}$.

We finish with another corollary of Lemma~\ref{lemT}, which characterises the hull in terms of the actions of $S$ and $\sigma$ on $\mathscr{A}^{\mathbb{N}}$.

\begin{theorem}\label{thmK}
The hull of an irreducible substitution semigroup $S$ is the smallest closed subset of $\mathscr{A}^{\mathbb{N}}$ that is both $S$-invariant and shift invariant.
\end{theorem}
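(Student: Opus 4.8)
The plan is to show that $\Omega$ is itself a closed, $S$-invariant, shift-invariant subset of $\mathscr{A}^{\mathbb{N}}$, and then to use Lemma~\ref{lemT} to establish that every such set contains it. The first reduction I would make is to exploit irreducibility: by Corollary~\ref{corD} we have $\Omega(a)=\Omega(b)$ for all letters $a,b$, so the hull $\Omega=\bigcup_{a\in\mathscr{A}}\Omega(a)$ collapses to $\Omega=\Omega(a)$ for any single fixed letter $a$. This collapse is precisely what lets the single-letter statement of Lemma~\ref{lemT} do all the work.

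First I would confirm that $\Omega$ is a legitimate competitor. It is closed, being a closure by construction; it is shift invariant by the remark immediately following the definition of $\Omega(a)$; and it is $S$-invariant by Lemma~\ref{lemQ}. Hence $\Omega$ lies in the family of closed, $S$-invariant, shift-invariant sets over which we are minimising.

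Next, for minimality, I would take an arbitrary nonempty closed subset $X$ of $\mathscr{A}^{\mathbb{N}}$ that is both $S$-invariant and shift invariant and apply Lemma~\ref{lemT} verbatim: it gives $\Omega(a)\subseteq X$ for every $a$, and since $\Omega=\Omega(a)$ we conclude $\Omega\subseteq X$. Together with the previous paragraph this identifies $\Omega$ as the least element of the family, which is the assertion of the theorem.

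The only point requiring care is the role of nonemptiness. The empty set is vacuously closed, $S$-invariant, and shift invariant, so "smallest" must be read over \emph{nonempty} competitors, as flagged in the introduction; correspondingly, the proof of Lemma~\ref{lemT} opens by choosing a point of $X$ and so applies exactly to nonempty $X$. One should therefore either restrict the comparison to nonempty sets or note separately that $\Omega$ is nonempty. I do not expect a genuine obstacle here: the substantive content — using irreducibility together with Lemma~\ref{lemK} to transport an arbitrary point of $X$ onto all of $\Lambda(a)$, and thence onto $\Omega(a)$ via shift invariance — has already been absorbed into Lemma~\ref{lemT}, so the proof of Theorem~\ref{thmK} amounts to assembling Corollary~\ref{corD}, Lemma~\ref{lemQ}, and Lemma~\ref{lemT}.
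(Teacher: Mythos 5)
Your proposal is correct and follows essentially the same route as the paper: verify that $\Omega$ is closed, shift invariant, and $S$-invariant (Lemma~\ref{lemQ}), then apply Lemma~\ref{lemT} together with Corollary~\ref{corD} to show every competitor contains $\Omega$. Your side remark about restricting to non-empty competitors is a fair observation (the paper's introduction indeed phrases the result for non-empty sets, and Lemma~\ref{lemT} implicitly assumes $X\neq\emptyset$), but it does not alter the argument.
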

\begin{proof}
We know that the hull $\Omega$ is a closed subset of $\mathscr{A}^{\mathbb{N}}$ that is shift invariant, and it is $S$-invariant by Lemma~\ref{lemQ}. 

Now let $X$ be any closed subset of $\mathscr{A}^\mathbb{N}$ that is both $S$-invariant and shift invariant. Then $X$ contains $\Omega(a)$, by Lemma~\ref{lemT}, and $\Omega(a)=\Omega$, by Corollary~\ref{corD}, so $\Omega$ is the smallest closed subset of $\mathscr{A}^{\mathbb{N}}$ that is both $S$-invariant and shift invariant.
\end{proof}


\begin{thebibliography}{9}

\bibitem{AvBoYo2010}
A. Avila, J. Bochi and J.-C. Yoccoz.
Uniformly hyperbolic finite-valued ${\rm SL}(2,\Bbb R)$-cocycles.
\textit{Comment. Math. Helv. 85 (2010), 813--884}.

\bibitem{BaGr2013}
M. Baake and U. Grimm.
\textit{Aperiodic Order Vol. 1: A Mathematical Invitation (Encyclopedia of Mathematics and its Applications Vol. 149)}.
Cambridge University Press, Cambridge, 2013.

\bibitem{BeRi2010}
V. Berth\'{e} and M. Rigo.
\textit{Combinatorics, Automata and Number Theory (Encyclopedia of Mathematics and its Applications Vol. 135)}.
Cambridge University Press, Cambridge, 2010.

\bibitem{Fa2003}
K. Falconer.
\textit{Fractal Geometry -- Mathematical foundations and applications}.
John Wiley \& Sons Inc., Hoboken--NJ, 2003.

\bibitem{Fo2002}
P. N. Fogg.
\textit{Substitutions in Dynamics, Arithmetics and Combinatorics (Lecture Notes in Mathematics Vol. 1794)}.
Eds. V. Berth\'{e}, S. Ferenczi, C. Mauduit and A. Siegel.
Springer, Berlin, 2002.

\bibitem{FrMaSt2012}
D. Fried, S. M. Marotta and R. Stankewitz.
Complex dynamics of M\"{o}bius semigroups.
\textit{Ergod. Th. \& Dynam. Sys. 32 (2012), 1889--1929}.

\bibitem{HaKe1976}
W. K. Hayman, P. B. Kennedy.
\textit{Subharmonic functions Vol. I (London Mathematical Society Monographs N\textsuperscript{o}. 9)}.
Academic Press Inc., London--New York, 1976.

\bibitem{HiMa1996}
A. Hinkkanen and G. J. Martin.
The dynamics of semigroups of rational functions I.
\textit{Proc. London Math. Soc. 73 (1996), 358--384}.

\bibitem{JaSh2022}
M. Jacques and I. Short.
Semigroups of isometries of the hyperbolic plane.
\textit{Int. Math. Res. Not. N\textsuperscript{o} 9 (2022), 6403--6463}.

\bibitem{Lo2002}
M. Lothaire.
\textit{Algebraic Combinatorics on Words}.
Cambridge University Press, Cambridge, 2002.

\bibitem{Ri2021}
G. Richomme.
On sets of indefinitely desubstitutable words.
\textit{Theor. Comput. Sci. 857 (2021), 97--113}.

\bibitem{Ro1998}
C. A. Rogers.
\textit{Hausdorff measures (Cambridge Mathematical Library)}.
Cambridge University Press, Cambridge, 1998.

\end{thebibliography}
\end{document}